\newtheorem{theorem}{Theorem}[section]
\newtheorem{conjecture}[theorem]{Conjecture}
\newtheorem{remark}[theorem]{Remark}
\newtheorem{corollary}[theorem]{Corollary}
\newtheorem{lemma}[theorem]{Lemma}
\newtheorem{proposition}[theorem]{Proposition}
\newtheorem{definition}[theorem]{Definition}
\newtheorem{example}[theorem]{Example}
\newcommand{\C}{\mathbb{C}}
\newcommand{\Z}{\mathbb{Z}}
\newcommand{\Q}{\mathbb{Q}}
\newcommand{\F}{\mathbb{F}}
\newcommand{\Qbar}{\overline{\mathbb{Q}}}
\newcommand{\ST}{\operatorname{ST}}
\newcommand{\AST}{\operatorname{AST}}
\newcommand{\Hg}{\operatorname{Hg}}
\newcommand{\Cc}{\mathcal{C}}
\newcommand{\p}{\mathfrak{p}}
\newcommand{\Pp}{\mathfrak{P}}
\newcommand{\Frob}{\operatorname{Frob}}
\newcommand{\Ker}{\operatorname{Ker}}
\newcommand{\ord}{\operatorname{ord}}
\newcommand{\id}{\operatorname{id}}
\newcommand{\Lcm}{\operatorname{lcm}}
\newcommand{\Jac}{\operatorname{Jac}}
\newcommand{\Unitary}{\operatorname{U}}
\newcommand{\USp}{\operatorname{USp}}
\newcommand{\diag}{\operatorname{diag}}
\newcommand{\End}{\operatorname{End}}
\newcommand{\cM}{\mathcal M}
\newcommand{\Tr}{\operatorname{Tr}}
\newcommand{\rk}{\operatorname{rk}}
\newcommand{\M}{\operatorname{M}}
\newcommand{\Ind}{\operatorname{Ind}}
\newcommand{\Lef}{\operatorname{L}}
\newcommand{\TL}{\operatorname{TL}}
\newcommand{\Sp}{\operatorname{Sp}}
\newcommand{\Aut}{\operatorname{Aut}}
\newcommand{\Gal}{\operatorname{Gal}}
\newcommand{\GL}{\operatorname{GL}}
\begin{document}

\title{Frobenius distribution for quotients\\ of Fermat curves of prime exponent}
\date{\today}

\author{Francesc Fit\'e}
\address{Francesc Fit\'e\\
Institut f\"ur Experimentelle Mathematik/Fakult\"at f\"ur Mathematik, Universit\"at Duisburg-Essen, D-45127 Essen\\
email: francesc.fite@gmail.com}
\author{Josep Gonz\'alez}
\address{Josep Gonz\'alez\\
Departament de Matem\`atica Aplicada IV, Universitat Polit\`ecnica de Catalunya, Av. V\'ictor Balaguer s/n., E-08800 Vilanova i la Geltr\'u\\
email: josepg@ma4.upc.edu}
\author{Joan-C. Lario}
\address{Joan-C. Lario\\
Departament de Matem\`atica Aplicada II, Universitat Polit\`ecnica de Catalunya, Edifici Omega-Campus Nord, Jordi Girona 1-3, E-08034 Barcelona\\
email: joan.carles.lario@upc.edu}

\begin{abstract}
Let $\Cc$ denote the Fermat curve over $\Q$ of prime exponent $\ell$. The Jacobian $\Jac(\Cc)$ of~$\Cc$ splits over $\Q$ as the product of Jacobians $\Jac(\Cc_k)$, $1\leq k\leq \ell-2$, where $\Cc_k$ are curves obtained as quotients of $\Cc$ by certain subgroups of automorphisms of $\Cc$. It is well known that $\Jac(\Cc_k)$ is the power of an absolutely simple abelian variety $B_k$ with complex multiplication. We call degenerate those pairs $(\ell,k)$ for which $B_k$ has degenerate CM type. For a non-degenerate pair $(\ell,k)$, we compute the Sato-Tate group of $\Jac(\Cc_k)$, prove the generalized Sato-Tate Conjecture for it, and give an explicit method to compute the moments and measures of the involved distributions. Regardless of $(\ell,k)$ being degenerate or not, we also obtain Frobenius equidistribution results for primes of certain residue degrees in the $\ell$-th cyclotomic field. Key to our results is a detailed study of the rank of certain generalized Demjanenko matrices.
\end{abstract}

\subjclass{11D41, 11M50 (primary), 11G10, 14G10 (secondary)}

\maketitle

\dedicatory{Dedicated to Josep Gran\'e on the occasion of his $70$th birthday}
\tableofcontents

\section{Introduction}

Both from the theoretical and the computational points of view, the problem of determining Frobenius distributions of low genus curves has attracted a growing interest in the past years (see for example \cite{KS08}, \cite{Ser12}, \cite{FKRS12}, and \cite{FS13}). 

In this paper, we consider this problem for a family of curves of arbitrary high genus and which have simple Jacobian in many cases. More concretely, for a prime $\ell$ and an integer $1\leq k \leq \ell-2$, we are concerned with the limiting distribution of the normalized Euler local factors $L_p(\Cc_k,T/\sqrt p)$ attached to the curves $\Cc_k$ defined by the affine equation
$$
v^{\ell}=u(u+1)^{\ell-k-1}\,.
$$
The curves $\Cc_k$ have genus $\frac{\ell-1}{2}$ and may be obtained as quotients of the Fermat curve $\Cc:y^\ell=x^\ell+1$ by certain subgroups of automorphisms of $\Cc$. One can in fact show that the Jacobian of $\Cc$ decomposes up to isogeny over $\Q$ as the product
$$
\Jac(\Cc)\sim_\Q\prod_{k=1}^{\ell-2}\Jac(\Cc_k)\,.
$$
It is also well known that the $L$-function of $\Jac(\Cc_k)$ can be written in terms of Hecke $L$-functions attached to Jacobi sums and that $\Jac(\Cc_k)$ is the power of an absolutely simple abelian variety $B_k$ with complex multiplication, say of dimension $r_k$. This is all recalled in the preliminary \S\ref{section: simple factors}.

We now procced to describe the three main results of the paper. We say that a pair $(\ell,k)$ is non-degenerate if the dimension of the Hodge group $\Hg(B_k)$ is maximal (that is, equal to $r_k$). This is equivalent to saying that the CM-type of $B_k$ is non-degenerate (in the sense of Kubota) or that the determinant of the Demjanenko matrix $D_k$ does not vanish. The $r_k\times r_k$ matrix $D_k$ can be elementarily constructed, and has been extensively studied in the literature (see for example \cite{Haz90}, \cite{SS95}, or \cite{Doh94}). 

The generalized Sato-Tate Conjecture for $\Jac(\Cc_k)$ predicts the existence of a compact real Lie group $\ST(\Jac(\Cc_k))\subseteq \USp(\ell-1)$, the Sato-Tate group, that determines the limiting distribution of the normalized Euler local factors $L_p(\Cc_k,T/\sqrt p)$. The main result in \S\ref{section: ST group} is the computation of $\ST(\Jac(\Cc_k))$ along with a proof of the equidistribution predicted by the generalized Sato-Tate Conjecture (see Proposition \ref{proposition: ST group}, Conjecture \ref{conjecture: ST}, and  Theorem \ref{theorem: ST Q}).

\begin{theorem}
If $(\ell,k)$ is a non-degenerate pair, the generalized Sato-Tate Conjecture holds for $\Jac(\Cc_k)$.
\end{theorem}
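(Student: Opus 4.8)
The plan is to deduce the generalized Sato-Tate Conjecture for $\Jac(\Cc_k)$ from the known equidistribution theorems for Hecke characters of CM abelian varieties, exploiting the fact that $\Jac(\Cc_k)$ is isogenous over $\Q$ to a power $B_k^{m}$ of an absolutely simple CM abelian variety whose $L$-function decomposes into Hecke $L$-functions attached to Jacobi sums. The first step is to fix the relevant number fields: let $M=\Q(\zeta_\ell)$ be the CM field acting on $B_k$, and let $E/\Q$ be the field over which all the Hecke characters $\chi$ appearing in the factorization of $L(\Jac(\Cc_k),s)$ (the Jacobi-sum Gr\"ossencharacters) are defined. Since $(\ell,k)$ is non-degenerate, the Hodge group $\Hg(B_k)$ has maximal dimension $r_k$, which means the relevant torus is as large as possible and the constraints among the Jacobi-sum characters are exactly the ``obvious'' ones coming from the action of $\Gal(M/\Q)$ and complex conjugation. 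I would make this explicit: the Sato-Tate group $\ST(\Jac(\Cc_k))$ computed in Proposition \ref{proposition: ST group} should be $\USp(\ell-1)\cap(\text{a torus induced up from }\Unitary(1)^{r_k})$ together with the finite symmetry coming from $\Gal(E/\Q)$ permuting the characters, and non-degeneracy guarantees the identity component is this full induced torus rather than a proper subtorus.

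The second, and main, step is to establish the equidistribution. Over the field $E$ (where everything is of CM type and the $L$-function is a product of Hecke $L$-functions of algebraic Gr\"ossencharacters), equidistribution of the Frobenius conjugacy classes with respect to the Haar measure on the identity component $\ST^0$ follows from Hecke's theorem on the equidistribution of $(\chi(\p))_\chi$ as $\p$ ranges over primes of $E$ — this is precisely where one invokes the classical analytic result (the non-vanishing of Hecke $L$-functions on the line $\Re(s)=1$, equivalently the joint equidistribution of the arguments of Jacobi sums), and non-degeneracy ensures there are no unexpected multiplicative relations among the $\chi(\p)$, so the image is equidistributed in the \emph{full} torus. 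One then descends from $E$ to $\Q$: the Galois group $\Gal(E/\Q)$ acts on the set of Hecke characters, inducing the component group $\ST/\ST^0$, and a standard argument (as in \cite{FKRS12} or \cite{FS13}, using Chebotarev to equidistribute the Frobenius in $\Gal(E/\Q)$ and combining it with the equidistribution in $\ST^0$ on each coset) upgrades the $E$-equidistribution to $\Q$-equidistribution with respect to Haar measure on all of $\ST(\Jac(\Cc_k))$. This is the content of Theorem \ref{theorem: ST Q}.

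The third step is bookkeeping: one must check that the group one has shown the Frobenius classes equidistribute in actually coincides with the conjectural Sato-Tate group, i.e. that $\ST(\Jac(\Cc_k))$ as defined via the $\ell$-adic monodromy / Hodge-theoretic recipe matches the torus-plus-component-group described above. For CM abelian varieties this identification is classical: the connected component is the image of the Serre group (equivalently, the compact form of the Mumford-Tate group), and under non-degeneracy this is exactly the induced torus of dimension $r_k$; the component group is $\Gal(E/\Q)$ acting through its effect on the CM type. Together with the equidistribution statement, this yields the generalized Sato-Tate Conjecture.

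The hard part will be the descent in Step 2 — making sure that the combination of Chebotarev equidistribution in $\Gal(E/\Q)$ and Hecke equidistribution in the torus is genuinely \emph{joint}, i.e. that the Frobenius in $\Gal(E/\Q)$ and the Gr\"ossencharacter values are asymptotically independent, so that the push-forward measure really is Haar on the (possibly disconnected) group $\ST(\Jac(\Cc_k))$ rather than some skew distribution. This requires the non-degeneracy hypothesis in an essential way (to know $\ST^0$ has the expected dimension and that the $\chi(\p)$ have no hidden relations) and is exactly the place where the detailed study of the rank of the generalized Demjanenko matrices $D_k$ enters.
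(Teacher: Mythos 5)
Your overall strategy coincides with the paper's: identify $\ST(\Jac(\Cc_k))$ as the torus $\Unitary(1)\times\stackrel{r_k}\ldots\times\Unitary(1)$ extended by the component group $(\Z/\ell\Z)^*\simeq\Gal(F/\Q)$, prove equidistribution over $F=\Q(\zeta_\ell)$ via Hecke's theorem on holomorphy and nonvanishing of $L$-functions of nontrivial unitarized Gr\"ossencharakters built from the Jacobi-sum characters $J_{(ka,a)}$, and use $\det(D_k)\neq 0$ to exclude hidden multiplicative relations, i.e.\ to guarantee that the character $\Psi$ formed from a nonzero exponent vector $(b_1,\dots,b_{r_k})$ is nontrivial. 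That is exactly Theorem \ref{theorem: ST F}.

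The step you yourself flag as the ``hard part'' --- the descent from $F$ to $\Q$ --- is, however, a genuine gap as written: invoking ``Chebotarev combined with coset-wise equidistribution'' and worrying about ``asymptotic independence'' begs the question, since that joint independence is precisely the statement to be proved, and no mechanism for it is given. The paper closes this step without any Chebotarev or independence argument: it applies Serre's criterion (Theorem \ref{theorem: Serre}) directly to the disconnected group $\mathcal G_\Q\simeq \Unitary(1)\times\stackrel{r_k}\ldots\times\Unitary(1)\rtimes G$, whose irreducible representations are classified (Serre, Prop.~25) as $\theta=\Ind(\chi\otimes\phi_{b_1,\dots,b_{r_k}})$ induced from $\mathcal G_0\rtimes H_{b_1,\dots,b_{r_k}}$; the Artin formalism together with the cyclicity of $G$ then gives $L_{\mathcal A_\Q}(\theta,s)^{n}\doteq L_{\mathcal A_F}(\phi_{b_1,\dots,b_{r_k}},s)\doteq L(\Psi,s)$, so holomorphy and nonvanishing on $\Re(s)\geq 1$ over $\Q$ follow from the over-$F$ case (Theorem \ref{theorem: ST Q}); the induced representations automatically encode the joint distribution of the torus angles and of Frobenius in $\Gal(F/\Q)$. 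Relatedly, you misplace where non-degeneracy is needed: the Demjanenko condition enters only in making $\Psi$ nontrivial over $F$ and, on the group side, in showing (via the Banaszak--Gajda--Kraso\'n description of the $\ell'$-adic image and the Lefschetz group computation) that $\ST^0$ is the full torus; the descent itself uses only the structure of the component group, not $D_k$.
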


The connected component of $\ST(\Jac(\Cc_k))$ is a product of a certain number of copies of the unitary group $\Unitary(1)$. This is easy to deduce from the well-known  structure of $\Hg(\Jac(\Cc_k))$. The main novelty concerning the computation of $\ST(\Jac(\Cc_k))$ is the description of its group of components. Abstractly, the group of components is simply isomorphic to $(\Z/\ell\Z)^*$. However, if one seeks to provide an explicit description of the limiting distribution of $L_p(\Cc_k,T/\sqrt p)$, one needs to supply an explicit embedding of $\ST(\Jac(\Cc_k))$ inside $\USp(\ell-1)$.

Following Serre's general strategy, our proof of equidistribution is based on the nonvanishing of certain $L$-functions attached to the irreducible nontrivial representations of $\ST(\Jac(\Cc_k))$. Then, we apply a theorem of Hecke on the holomorphicity and nonvanishing for $\Re(s)\geq 1$ of the $L$-functions attached to unitarized nontrivial Hecke characters. It should be mentioned that it is precisely the nonvanishing of $\det(D_k)$ that ensures that the Hecke character involved in the proof, which is constructed by means of Jacobi sums, is nontrivial. A proof of equidistribution for abelian varieties with complex multiplication in general is certainly well known to the experts (see Johansson \cite{Joh13}). However, in our particular example, once one has an explicit description of $\ST(\Jac(\Cc_k))$, one can show equidistribution in an elementary and very explicit way, that we illustrate in Theorem \ref{theorem: ST Q}.

Naturally, we call degenerate the pairs $(\ell,k)$ for which $\rk(D_k)<r_k$. Building on several known results (see \cite{Kub65}, \cite{Gre80}, \cite{Rib80}), we prove in \S\ref{section: degenerate primes} the following theorem (see Theorem \ref{theorem: det deg}).

\begin{theorem}\label{theorem: carac}
A pair $(\ell,k)$ is degenerate if and only if the three following conditions hold:
\begin{enumerate}[i)]
\item $k$ is not a primitive cubic root of unity modulo $\ell$;
\item $\ord(-k^2-k)$ and $\ord(k)$ are odd, where $\ord$ means the order in $(\Z/\ell\Z)^*$;
\item $v_3( \ord(k)) > v_3(\ord(k^2+k))$, where $v_3$ denotes the $3$-adic valuation.
\end{enumerate}
In this case, $\dim(\Hg(\Jac(\Cc_k)))=\rk(D_k)= \frac{\ell-1}{2}\left(1-\frac{2}{N_k}\right)$, where $N_k:=\Lcm (\ord(-k^2-k),\ord(k))$. 
\end{theorem}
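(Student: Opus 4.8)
The plan is to reduce the statement to a purely group-theoretic computation of the rank of the Demjanenko matrix $D_k$, exploiting the standard description of $\rk(D_k)$ (equivalently $\dim\Hg(B_k)$) via Kubota's theory of CM types. Recall that the CM type of $B_k$ is determined by the subset $S_k\subseteq(\Z/\ell\Z)^*$ arising from the Jacobi-sum Hecke character, and that $B_k$ has nondegenerate CM type precisely when $S_k$ spans the space of all $\Q$-valued functions on $(\Z/\ell\Z)^*$ modulo the relations coming from the subgroup $H_k=\langle k\rangle$ of "automorphisms" stabilizing the CM type (so that $B_k$ is defined over the fixed field of $H_k$). Concretely, following Kubota--Greenberg--Ribet, the degeneracy of $B_k$ is governed by whether the "reflex" of the CM type, viewed as an element of the group ring $\Q[(\Z/\ell\Z)^*/H_k']$ for the appropriate subgroup $H_k'$, lies in a proper ideal; the drop in rank is measured by the dimension of the kernel of multiplication by the idempotent attached to $\{\pm 1\}$ (the complex conjugation), intersected with the space of functions invariant under $H_k'$.

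First I would make the dictionary precise: identify $H_k$ as $\langle k\rangle\subseteq(\Z/\ell\Z)^*$ and note that the relevant stabilizer for the CM type is $\langle -k^2-k\rangle$ together with $\langle k\rangle$, so that $N_k=\Lcm(\ord(-k^2-k),\ord(k))$ is the order of the subgroup $G_k=\langle k,\,-k^2-k\rangle$ generated by both. Then I would invoke the characterization (due to Kubota, refined by Greenberg and Ribet, for the specific Jacobi-sum CM types) that $B_k$ is degenerate if and only if the quotient group $(\Z/\ell\Z)^*/G_k$ is nontrivial, \emph{and} the image of $-1$ in this quotient is trivial, \emph{and} a certain $3$-adic obstruction vanishes --- this last condition coming from the fact that the only "extra" relations among Jacobi sums beyond the Hasse--Davenport and elementary ones are the ones discovered by Kubota involving cube roots of unity. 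Unwinding "$-1\in G_k$" using that $G_k=\langle k\rangle\cdot\langle k^2+k\rangle$ and that $-1$ has order $2$ gives exactly conditions (i), (ii), and (iii): condition (ii) says neither $\ord(k)$ nor $\ord(-k^2-k)$ contributes the factor $2$ needed to realize $-1$, condition (iii) is the $3$-adic comparison ensuring the cube-root relation does not already force nondegeneracy the other way, and condition (i) excludes the sporadic case $k^3\equiv1$ where $B_k$ is (isogenous to a power of) a CM elliptic curve with nondegenerate type. Once degeneracy is established, the rank formula $\rk(D_k)=\frac{\ell-1}{2}\bigl(1-\frac{2}{N_k}\bigr)$ follows by computing $\dim\Hg(B_k)$ as $[(\Z/\ell\Z)^*:G_k]$ times half the order of $G_k$ minus a correction of $[(\Z/\ell\Z)^*:G_k]$ for the single relation per $G_k$-coset imposed by complex conjugation; since $|(\Z/\ell\Z)^*|=\ell-1$ and $|G_k|=N_k$, this is $\frac{\ell-1}{2}-\frac{\ell-1}{N_k}=\frac{\ell-1}{2}(1-\frac{2}{N_k})$.

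The main obstacle I anticipate is pinning down exactly which relations among the Jacobi-sum characters are responsible for the rank drop --- that is, proving that conditions (i)--(iii) are not merely \emph{sufficient} but also \emph{necessary}. The elementary relations (linearity, Galois action) always hold and are already accounted for in the generic rank $r_k$; the content of Kubota's theorem is that the \emph{only} further linear dependence is the cubic one, and transcribing this into the arithmetic of $\ord(k)$ and $\ord(k^2+k)$ requires care with how the subgroup generated by $k^2+k$ sits inside $(\Z/\ell\Z)^*$ relative to the $3$-part of the group. I would handle this by first treating the case $3\nmid\ell-1$ (where no cube roots of unity exist mod $\ell$ and the analysis collapses), then the case $3\mid\ell-1$ where I would separately analyze the Sylow-$3$ and prime-to-$3$ parts of $(\Z/\ell\Z)^*$, showing that the cube-root relation is active precisely when the $3$-adic valuations are ordered as in (iii). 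The remaining steps --- deriving the closed-form rank formula and double-checking the boundary case $k^3\equiv 1\pmod\ell$ (where $N_k=3$ but the pair is nondegenerate, consistent with excluding it via (i)) --- are then routine bookkeeping.
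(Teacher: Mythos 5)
There is a genuine gap: your argument outsources precisely the core content of the theorem to an unproven black box. You ``invoke the characterization (due to Kubota, refined by Greenberg and Ribet)'' that degeneracy occurs exactly when a quotient condition, a condition on $-1$, and ``a certain $3$-adic obstruction'' hold --- but no such ready-made characterization exists in those references; it is exactly what has to be proved here. The actual mechanism (this is how the paper proceeds, see Theorem~\ref{theorem: det deg}) is: (a) by the Dedekind determinant formula and Kubota's rank lemma (Propositions~\ref{proposition: demjanenko determinant} and~\ref{proposition: alaRib}), $\rk(D_k)$ equals the number of odd characters $\psi$ of $G$ (trivial on $W_k$) with $\sum_{a\in M_k}\psi(a)\neq 0$; (b) by the Greenberg--Stickelberger identity (Proposition~\ref{proposition: calculet brillant}), $\sum_{a\in M_k}\psi(a)=B_{1,\psi}\bigl(\psi(k+1)^{-1}-1-\psi(k)^{-1}\bigr)$; (c) $B_{1,\psi}\neq 0$ by the analytic class number formula, so the sum vanishes iff $\psi(k)$ is a primitive cubic root of unity and $\psi(k(k+1))=-1$ (the elementary fact that a root of unity $\omega$ has $\omega+1$ again a root of unity only when $\omega$ is a primitive cube root); (d) translating this into the order conditions i)--iii) and counting the vanishing characters ($\frac{\ell-1}{N_k}$ of them) gives the rank formula. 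Steps (b) and (c) are the arithmetic input that rules out any ``extra'' relations; your proposed substitute --- a case analysis on the Sylow-$3$ and prime-to-$3$ parts of $(\Z/\ell\Z)^*$ --- is pure group theory and cannot by itself establish necessity, which is the very obstacle you flag but do not resolve.

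Several details of your dictionary are also off, which signals that the black-box statement you invoke is not correctly pinned down. You require the image of $-1$ in $(\Z/\ell\Z)^*/\langle k,-k^2-k\rangle$ to be \emph{trivial}, yet condition ii) (both orders odd, i.e.\ $N_k$ odd) says exactly that $-1$ does \emph{not} lie in that subgroup; your own subsequent unwinding of ii) contradicts your stated criterion. The role of condition i) is not that $B_k$ is ``a CM elliptic curve'' when $k^3\equiv 1$ (its dimension is $\frac{\ell-1}{6}$); rather, in that case $W_k=\{1,k,k^2\}$ has order $3$, the matrix is indexed by $M_k/W_k$, and the characters whose sums vanish are not trivial on $W_k$, so they simply do not enter $\det(D_k)$. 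Finally, the rank formula is asserted via a heuristic (``one relation per coset imposed by complex conjugation'') that happens to give $\frac{\ell-1}{2}\bigl(1-\frac{2}{N_k}\bigr)$ but is not justified; the correct count is of odd characters killing $-k^2-k$ and sending $k$ to a primitive cube root of unity, which is $\frac{\ell-1}{N_k}$ and requires the analysis above.
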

The previous result has several consequences. As an example, one can deduce that the rank of~$D_k$ is ``asymptotically non-degenerate", that is,
$$
\lim_{\begin{array}{c}\ell \rightarrow \infty\\ 1\leq k\leq\ell-2 \end{array}}\frac{\rk(D_k)}{r_k}=1\,.
$$
However, our interest in the previous theorem is motivated by the fact that it constitutes the fundamental technical result for the discussion in \S\ref{section: explicit distributions}. In that section, we are concerned with limiting distributions when restricting to primes of a given residue degree $f$ in $\Q(\zeta_\ell)$. 
With this in mind, for each divisor $f$ of $\ell-1$, we define a matrix $D_{k,f}$, which may be seen as a generalization of $D_k$, and say that $f$ is a \emph{non-$k$-degenerate residue degree} if $\det(D_{k,f})\not=0$. We then achieve the following concrete characterization. A divisor $f$ of~$\ell-1$ is non-$k$-degenerate if and only if it is odd and either:
\begin{enumerate}[i)] 
\item $(\ell,k)$ is non-degenerate; or
\item $(\ell,k)$ is degenerate and $f\in \mathcal F_0 \cup \mathcal F_1$.
\end{enumerate} 
Here, $\mathcal F_0$ (resp. $\mathcal F_1$) is the set of odd divisors of $\ell-1$ such that $v_3(f)=v_3(N_k)-1$ and $f$ is a multiple of $N_k/3$ (resp. such that $v_3(f)\geq v_3(N_k)$). We say that $f$ is $k$-degenerate otherwise. Note that when $(\ell,k)$ is degenerate, there are still non-$k$-degenerate divisors $f$ of $\ell-1$.
In \S\ref{section: explicit distributions}, we develope a method to compute the limiting distribution of $L_p(\Cc_k,T/\sqrt p)$ when restricting to primes $p$ of any fixed \emph{non-$k$-degenerate} residue degree~$f$ in~$\Q(\zeta_\ell)$. This method is based on a detailed analysis of the local factors of $\Jac(\Cc_k)$ and works independently of the construction of $\ST(\Jac(\Cc_k))$ (which we recall that we are only able to achieve for non-degenerate pairs $(\ell,k)$). 

The rank of $D_{k,f}$ depends on the size $n_{k,f}$ of a certain subgroup $W_{k,f}$ of $(\Z/\ell\Z)^*$. A comprehensive description of the subgroup $W_{k,f}$ will be given in \S\ref{section: degenerate primes}. For the purpose of stating our last main result, it will suffice for the moment to mention that
$$
n_{k,f}=
\begin{cases}
3f & \text{if $k$ is a primitive cubic root of unity and }v_3(f)=0\,,\\
3f & \text{if $(\ell,k)$ is degenerate and $f\in \mathcal F_0$}\,,\\
f & \text{otherwise.}
\end{cases}
$$
The following theorem is a combination of Corollary \ref{corollary: GR} and Theorem \ref{theorem: equidist f}.
\begin{theorem}\label{theorem: intro} Let $p\not= \ell$ be a prime of residue degree $f$ in $\Q(\zeta_\ell)$. Then
$$
L_p(\Cc_k,T/\sqrt p)=
\begin{cases} 
\displaystyle{(1+T^f)^{\frac{\ell-1}{f}}} & \text{if $f$ is even,}\\[4pt]
\displaystyle{\prod_{i=1}^{r_{k,f}}(1+s_i(p)T^f+T^{2f})^{\frac{n_{k,f}}{f}}} & \text{if $f$ is non-$k$-degenerate,}
\end{cases}
$$ 
where $s_i(p)\in[-2,2]$ and $r_{k,f}=\frac{\ell-1}{2n_{k,f}}$.  Moreover, in the case that $f$ is non-$k$-degenerate, the sequence $\{(s_1(p),\dots,s_{r_{k,f}}(p)) \}_p$, where $p\not=\ell$ runs over the set of primes of residue degree $f$ in $\Q(\zeta_\ell)$, is equidistributed over $[-2,2]^{r_{k,f}} /\mathfrak{S}_{r_{k,f}}$ with respect to the measure
$
\prod_{i=1}^{r_{k,f}}\frac{1}{\pi}\frac{dx_i}{\sqrt{4-x_i^2}}$, where $\mathfrak{S}_{r_{k,f}}$ denotes the symmetric group on $r_{k,f}$ letters.
\end{theorem}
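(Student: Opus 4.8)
The plan is to separate the two claims — the shape of the Euler factor, and the equidistribution of the $s_i(p)$ — since the first is purely a local computation at $p$ and the second is a global statement proved by Serre's nonvanishing method via Theorem~\ref{theorem: carac}.

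\textbf{Step 1: the shape of $L_p(\Cc_k,T/\sqrt p)$.} Recall from \S\ref{section: simple factors} that the $L$-function of $\Jac(\Cc_k)$ factors as a product of Hecke $L$-functions attached to Jacobi-sum Hecke characters over $\Q(\zeta_\ell)$; equivalently, $\Jac(\Cc_k)\sim B_k^{m_k}$ for an absolutely simple CM abelian variety $B_k$ of dimension $r_k$, and the Galois action permutes the Jacobi-sum characters through the action of $(\Z/\ell\Z)^*$. Fix a prime $p\neq\ell$ of residue degree $f$ in $\Q(\zeta_\ell)$, i.e. $f=\ord(p)$ in $(\Z/\ell\Z)^*$. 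If $f$ is even, then $-1$ lies in the subgroup generated by $p$, so each relevant Jacobi-sum character, evaluated on a prime above $p$, has the property that its value $\alpha$ and its complex conjugate $\bar\alpha=p^f/\alpha$ are related by $\bar\alpha=-\alpha$ up to the symmetry forcing $\alpha^2=-p^f$; collecting the local factor over the $\ell-1$ conjugate characters and passing to $\C[T]$ one gets $(1+T^f)^{(\ell-1)/f}$ after normalization. If $f$ is odd, one groups the $(\ell-1)$ embeddings into orbits under the action of the subgroup $W_{k,f}\subseteq(\Z/\ell\Z)^*$ of size $n_{k,f}$: characters in the same $W_{k,f}$-orbit contribute the \emph{same} quadratic factor $1+s_i(p)T^f+T^{2f}$ (this is exactly the content of the definition of $W_{k,f}$ — it is the stabilizer, in the relevant sense, of the value of the Jacobi sum up to the substitutions that preserve the local factor), with $s_i(p)=(\alpha_i+\bar\alpha_i)/p^{f/2}\in[-2,2]$ since $|\alpha_i|=p^{f/2}$ and $s_i(p)$ is real. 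The number of distinct orbits is $r_{k,f}=\tfrac{\ell-1}{2n_{k,f}}$, and each quadratic factor is raised to the multiplicity $n_{k,f}/f$ coming from the $n_{k,f}/f$ primes above $p$ inside the fixed field of $W_{k,f}$. This is the assertion of Corollary~\ref{corollary: GR}; I would derive it by the standard manipulation of Jacobi sums of Weil, making the orbit bookkeeping explicit.

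\textbf{Step 2: equidistribution.} Now assume $f$ is non-$k$-degenerate, so by the characterization recalled in the introduction $\det(D_{k,f})\neq 0$. Consider the group $H=\Unitary(1)^{r_{k,f}}$, acting on $\C^{\ell-1}$ through the block representation that sends $(u_1,\dots,u_{r_{k,f}})$ to the direct sum over $i$ of $(u_i\oplus \bar u_i)^{\oplus n_{k,f}/f}$ followed by the appropriate $\USp$-conjugation; $s_i(p)$ is then $\Tr$ on the $i$-th $\Unitary(1)$-factor evaluated at the conjugacy class attached to the Frobenius at $p$. To apply Serre's criterion I need, for every nontrivial irreducible character $\chi$ of $H$ — that is, every nonzero tuple of integers $(a_1,\dots,a_{r_{k,f}})$, $\chi(u)=\prod u_i^{a_i}$ — that the $L$-function $L(\chi,s)=\prod_p(1-\chi(\Frob_p)p^{-s})^{-1}$, the product running over primes of residue degree $f$, extends holomorphically and nonvanishingly to $\Re(s)\ge 1$. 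Each such $L(\chi,s)$ is, up to finitely many Euler factors and a shift, the Hecke $L$-function of a character of $\Q(\zeta_\ell)$ (or of the fixed field of $W_{k,f}$) built as a product of powers of the Jacobi-sum Hecke characters with exponents $a_i$; by Hecke's theorem such an $L$-function is holomorphic and nonzero on $\Re(s)\ge 1$ \emph{provided} the Hecke character is nontrivial (i.e.\ not a finite-order twist of the trivial one). The nontriviality is precisely where Theorem~\ref{theorem: carac}, via $\det(D_{k,f})\neq 0$, enters: a relation among the Jacobi-sum characters making some nonzero combination trivial would force a nontrivial kernel relation encoded by $D_{k,f}$, contradicting $\det(D_{k,f})\neq0$. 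Granting this, the Wiener--Ikehara/Serre machinery gives that the classes $\Frob_p$ equidistribute in $H$ for the Haar measure, and pushing forward the normalized Haar measure of $\Unitary(1)$ by $u\mapsto u+\bar u=2\cos\theta$ yields exactly $\tfrac1\pi\,dx/\sqrt{4-x^2}$ on each factor; the image lands in $[-2,2]^{r_{k,f}}/\mathfrak S_{r_{k,f}}$ because the $s_i(p)$ are only defined as an unordered tuple (the labelling of the $W_{k,f}$-orbits is not canonical).

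\textbf{Main obstacle.} The delicate point is Step~2, and within it the passage from $\det(D_{k,f})\neq 0$ to the \emph{nontriviality of every nonzero power-product of the Jacobi-sum Hecke characters} restricted to primes of residue degree $f$. One must check that the generalized Demjanenko matrix $D_{k,f}$ is set up so that its rank equals the rank of the lattice of relations among the unitarized Jacobi-sum characters that are visible on the subset of primes of residue degree $f$ — i.e.\ that descending to that subset of primes does not lose (or spuriously create) relations beyond those accounted for by the subgroup $W_{k,f}$. Getting the dictionary between the combinatorics of $W_{k,f}\subseteq(\Z/\ell\Z)^*$, the columns of $D_{k,f}$, and the multiplicativity of Jacobi sums exactly right — in particular handling the two exceptional cases ($k$ a primitive cubic root of unity, and $f\in\mathcal F_0$) where $n_{k,f}=3f$ rather than $f$ — is the real work; everything else is Hecke's theorem plus the standard equidistribution formalism.
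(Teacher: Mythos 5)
Your Step 1 follows the paper's route (Proposition \ref{proposition: local factor} together with Corollary \ref{corollary: GR}), but two details are off. First, the claim that $W_{k,f}$-conjugate Jacobi-sum characters give the same factor ``by definition of $W_{k,f}$'' is not by definition: $W_{k,f}$ is defined combinatorially through $E_{k,f}$, i.e.\ through the prime factorization of the ideal $J_{(ka,a)}(\p)\mathcal O_F$, and the passage from ``fixes the ideal'' to ``fixes the value $J_{(ka,a)}(\p)$'' is exactly Lemma \ref{lemma: Wkf}, which needs the nontrivial input of \cite[Lemma~3.2]{Gon99}. Second, in the even-$f$ case your relation $\bar\alpha=-\alpha$, $\alpha^2=-p^f$ is wrong: since $-1\in H_f\subseteq W_{k,f}$ the value $J_{(k,1)}(\p)$ is \emph{real}, hence $\pm p^{f/2}$, and the sign is pinned down by counting points of $\Cc_k$ modulo $\ell$ (or by citing \cite{GR78}). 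Likewise the exponent $n_{k,f}/f$ does not come from ``$n_{k,f}/f$ primes above $p$ in $F^{W_{k,f}}$'' (there are $(\ell-1)/n_{k,f}$ of those, as $p$ splits completely there); it comes from the orbit size $n_{k,f}$ combined with the descent identity $L_p(\Cc_k,T)^{\ell-1}=L_\p((\Cc_k)_F,T^f)^{(\ell-1)/f}$ of Lemma \ref{lemma: Lfunctions}. These are repairable bookkeeping points.

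The genuine gap is in Step 2, at the sentence asserting that the Euler product taken only over primes of residue degree $f$ is ``up to finitely many Euler factors and a shift'' a Hecke $L$-function. It is not: you are discarding a positive density of primes of $F$, and the condition ``residue degree $f$ in $F/\Q$'' cannot be cut out by twisting Gr\"ossencharakters of $F$ by anything defined on ideals of $F$, so Hecke's nonvanishing theorem does not apply to your restricted product as written. This is precisely the problem that Proposition \ref{proposition: low gros} and Theorem \ref{theorem: equidist f} are designed to solve: one descends to $F_{k,f}=F^{W_{k,f}}$ by forming $\Psi_{(ka,a),f}(\Pp)=\prod_{w\in W_{k,f}}J_{(wka,wa)}(\Pp\mathcal O_F)$, checks that this is a Gr\"ossencharakter of $F_{k,f}$ whose value at a prime $\Pp$ below a degree-$f$ prime $\p$ of $F$ equals $J_{(ka,a)}(\p)^{n_{k,f}^2/f}$, and then detects the residue-degree condition by the Frobenius in $\Gal(F/F_{k,f})$, which by Artin reciprocity is a finite-order Hecke character of $F_{k,f}$; every Weyl sum then becomes a sum of a genuine Gr\"ossencharakter of $F_{k,f}$ over \emph{all} its primes, to which Hecke's theorem applies, and nontriviality is read off from $\det(D_{k,f})\neq0$ much as you indicate. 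Your parenthetical ``(or of the fixed field of $W_{k,f}$)'' points in the right direction but is never developed, and the obstacle you single out (the rank dictionary between $D_{k,f}$ and relations among characters) is the easier half once the correct characters over $F_{k,f}$ and the class-field-theoretic sieving of the residue degree are in place; without that construction the proof of equidistribution does not go through.
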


The case of even $f$ is easy and well-known (see \cite[Lemma 1.1]{GR78}). The proof for the case of non-$k$-degenerate $f$ relies again on the result of Hecke mentioned above. Now, the nontriviality of the Hecke character appearing in the core of the proof is ensured by the nonvanishing of the determinant of the matrix $D_{k,f}$. In \S\ref{section: numerical data}, examples and numerical data are shown. Tables~\ref{table: moments} and~\ref{table: statistical moments} compare the theoretical moments with the moment statistics up to $2^{27}$ for the first trace of $L_p(\Cc_k,T/\sqrt p)$ for a few choices of $(\ell,k)$.
\vskip 0.3truecm
\textbf{Notation and conventions.} Throughout the article, we will write $\zeta_\ell$ for a primitive $\ell$th root of unity, $F:=\Q(\zeta_\ell)$, $G:=(\Z/\ell\Z)^ *$, and $g$ for a generator of $G$. Also, for $a\in G$ or $a$ a rational number with denominator coprime to $\ell$, the symbol $\langle a \rangle\in \Z$ will denote the unique representative of~$a$ modulo~$\ell$ between~$0$ and~$\ell-1$ (to avoid any potential confusion: in any case should $\langle a \rangle$ be thought as the subgroup of of $G$ generated by $a$). We will use left exponential notation for Galois actions. We will identify $G$ and $\Gal(F/\Q)$ via the isomorphism
\begin{equation}\label{equation: iso groups}
G\rightarrow\Gal(F/\Q)\,,\qquad t\mapsto \sigma_t\,,\qquad\text{where ${}^{\sigma_t}(\zeta_\ell):=\zeta_\ell^t$}\,.  
\end{equation}
Any number field will $K$ be assumed to belong to a fixed algebraic closure $\overline \Q$ of $\Q$, and we will write $G_K:=\Gal(\overline\Q/K)$ for its absolute Galois group. We will refer to the prime ideals of the ring of integers~$\mathcal O_K$ of a number field~$K$, simply as primes of $K$. For an algebraic variety~$X$ defined over a number field~$K$ and an extension $L/K$, we will denote by $X_L$ the base change of~$X$ to~$L$.
For an abelian variety~$A$ over~$K$ and a prime ideal~$\p$ of~$K$ of good reduction for~$A$, we will let $L_\p(A,T)=\prod_{i=1}^{2\dim(A)}(1-\alpha_iT)$ denote the local factor of $A$ at $\p$, that is the polynomial with the defining property that for each positive integer $n$
$$
|A(\F_{q^n})| = \prod_{i=1}^{2\dim(A)}(1-\alpha_i^n)\,,
$$
where $q=N{\p}$ is the norm of $\p$.

\textbf{Acknowledgements.} Thanks to Santiago Molina for helpful discussions, to Anna Somoza and Joan S\'anchez for computer assitance in the elaboration of Table \ref{table: statistical moments}, and to the referee for helpful suggestions. The authors were partially supported by MECD project MTM2012-34611. The first author received finantial support from the German Research Council via CRC 701.

\section{Preliminaries}\label{section: simple factors}

Fix a prime $\ell\geq 3$. Let us denote by $ \Cc$  the Fermat curve over $\Q$ defined as the
projective closure of the affine curve
$
y^{\ell} =x^{\ell} +1 \,.
$
It is well-known that  the curve $ \Cc$ has genus $\binom{\ell-1}{2}$, good reduction at all primes $p\neq \ell$, and no singular points. Moreover,
the set
$$
\omega_{i,j}:= \frac{x^{j-1}}{y^i}d x\,,
$$
for $2\leq i \leq \ell-1$ and $1\leq j\leq i-1$ is a basis of the $\Q$-vector space of regular differentials $\Omega^1_{\Cc/\Q}$.
In this section we will recall  results concerning the decomposition of the Jacobian of $\Cc$ over $\Q$.  We will particularize a result of \cite{KR78} to the case of prime exponent~$\ell$ that completely caracterizes the absolutely simple factors of this decomposition. 
Then we will introduce the Hecke characters that describe the $L$-functions attached to these simple factors.

\subsection{Decomposing the Jacobian of a Fermat curve}

\begin{proposition} For every integer $k$ such that $1\leq k\le \ell-2$ we denote by $\Cc_k$ the normalization of the projective
closure
of the affine curve given by the equation
\begin{equation}\label{afi}  v^{\ell}=u(u+1)^{\ell-k-1}\,.\end{equation}
\begin{enumerate}[i)]
\item  The morphism $\pi_k: \Cc\rightarrow \Cc_k$  defined
by the assignment
$(x,y)\mapsto(u,v)=(x^{\ell}, x\,y^{\ell-k-1})$ has degree $\ell$.
\item Let $\mathcal A_k$ be the $\Gal(F/\Q)$-stable subgroup of automorphisms of $\mathcal C$ generated by $\gamma_k$, where $\gamma_k$ is defined by the assignment $(x,y)\mapsto (x\,\zeta_\ell^{k+1},y\,\zeta_\ell)$. The curve $\Cc_k$ is the quotient curve of $\Cc$ by $\mathcal A_k$ and its genus is $\frac{\ell-1}{2}$.
\item $\Jac(\Cc)$ is isogenous over $\Q$ to the abelian variety
$\prod_{k=1}^{\ell-2}\Jac(\Cc_k)$.
\end{enumerate}
\end{proposition}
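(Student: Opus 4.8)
The proposition has three parts, but parts (i)–(iii) are tightly linked, so I would organize the proof around the group action. The plan is to start from the well-known fact that $\Cc$ carries the automorphism group $A \cong \mu_\ell \times \mu_\ell$ acting by $(x,y)\mapsto(\zeta_\ell^a x, \zeta_\ell^b y)$, and that the quotient $\Cc/A \cong \P^1$; all the curves $\Cc_k$ will appear as intermediate quotients. First I would verify (i): the map $\pi_k(x,y)=(x^\ell, x y^{\ell-k-1})$ is well-defined on the affine model by a direct substitution — set $u=x^\ell$, $v=xy^{\ell-k-1}$ and check $v^\ell = x^\ell y^{\ell(\ell-k-1)} = x^\ell (x^\ell+1)^{\ell-k-1} = u(u+1)^{\ell-k-1}$, using $y^\ell = x^\ell+1$. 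Then I would extend $\pi_k$ to the smooth projective models (the normalization $\Cc_k$ is forced once we know the affine curve is the image) and compute the degree. The degree-$\ell$ claim follows because the function field extension $\Q(\Cc)/\Q(\Cc_k)$ is generated by... one must identify which subgroup of $A$ fixes $\Q(\Cc_k)=\Q(u,v)=\Q(x^\ell, xy^{\ell-k-1})$: it is precisely the cyclic group $\mathcal A_k=\langle\gamma_k\rangle$ of order $\ell$, since $\gamma_k$ fixes $x^\ell$ and sends $xy^{\ell-k-1}\mapsto x\zeta_\ell^{k+1} \cdot y^{\ell-k-1}\zeta_\ell^{\ell-k-1} = xy^{\ell-k-1}\zeta_\ell^{k+1+\ell-k-1}=xy^{\ell-k-1}$. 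That simultaneously proves (ii) that $\Cc_k = \Cc/\mathcal A_k$, and the genus $\tfrac{\ell-1}{2}$ then comes from Riemann–Hurwitz applied to the degree-$\ell$ cover $\Cc\to\Cc_k$ (or, more cleanly, directly from the affine equation $v^\ell = u(u+1)^{\ell-k-1}$ as a superelliptic curve: it is a cyclic $\ell$-cover of $\P^1_u$ branched at $u=0$, $u=-1$, $u=\infty$ with the appropriate ramification data, giving genus $(\ell-1)/2$).

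**The Galois-stability and the isogeny decomposition.** For part (ii) I still owe the claim that $\mathcal A_k$ is $\Gal(F/\Q)$-stable: $\sigma_t$ sends $\gamma_k$ to the automorphism $(x,y)\mapsto(x\zeta_\ell^{t(k+1)}, y\zeta_\ell^t)=\gamma_k^t$, so $\mathcal A_k$ is preserved (indeed as a group it is defined over $\Q$, which is what makes $\Cc_k$ a curve over $\Q$ rather than merely over $F$). For part (iii), the standard approach is to decompose $\Jac(\Cc)$ using the action of $A\cong(\Z/\ell)^2$ on $H^0(\Cc,\Omega^1)$, whose basis $\omega_{i,j}$ is given in the preliminaries. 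Each $\omega_{i,j}$ is an eigenvector for $A$, and the characters that occur are indexed by pairs $(a,b)$ with $a,b,a+b\not\equiv 0\pmod\ell$ (in the usual normalization). The key combinatorial point is that these $\binom{\ell-1}{2}$ characters partition into $\ell-2$ orbits under a suitable equivalence, one orbit for each $k$ with $1\le k\le\ell-2$, and the sub-Jacobian cut out by the $k$-th orbit is exactly $\Jac(\Cc_k)$ — equivalently, $\pi_k^*\colon \Jac(\Cc_k)\to\Jac(\Cc)$ embeds $\Jac(\Cc_k)$ as the $A/\mathcal A_k$-part, and summing these maps gives an isogeny $\prod_k \Jac(\Cc_k)\to\Jac(\Cc)$. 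One checks it is an isogeny by a dimension count: $\sum_k \tfrac{\ell-1}{2} = (\ell-2)\cdot\tfrac{\ell-1}{2} = \binom{\ell-1}{2} = \dim\Jac(\Cc)$, together with the fact that the images of the $\pi_k^*$ span $H^0(\Cc,\Omega^1)$, which follows from the orbit partition being a genuine partition (distinct $k$ give disjoint sets of characters). Since $\pi_k$ is defined over $\Q$, the isogeny is defined over $\Q$.

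**Where the work concentrates.** Parts (i) and (ii) are essentially bookkeeping with function fields and Riemann–Hurwitz; I expect no real obstacle there beyond being careful about the behavior at infinity and at the branch points (one should double-check that the affine equation $v^\ell=u(u+1)^{\ell-k-1}$ really is irreducible and that its normalization has the claimed genus — this needs $\gcd(\ell,\ell-k-1)=1$, which holds since $\ell$ is prime and $1\le k\le\ell-2$ forces $1\le\ell-k-1\le\ell-2$, so $\ell\nmid(\ell-k-1)$). The substantive step is (iii): the main obstacle is setting up the correspondence between the $\ell-2$ values of $k$ and the orbits of eigencharacters, and verifying that $\pi_k^*$ lands in the right piece and that these pieces are independent. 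Concretely, I would index the character of $\omega_{i,j}$ and show that $\pi_k$ pulls back the differentials on $\Cc_k$ to exactly those $\omega_{i,j}$ whose character is fixed by $\mathcal A_k$, i.e.\ those with a fixed value of the relevant linear form in $(i,j)$ modulo $\ell$; this is where one must be most careful, and it is the heart of the result of \cite{KR78} being specialized here. Once the eigenspace decomposition and the orbit count match, the isogeny over $\Q$ is immediate.
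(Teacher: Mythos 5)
Your proposal follows essentially the same route as the paper: identify $\mathcal A_k=\langle\gamma_k\rangle$ as the order-$\ell$ group satisfying $\pi_k\circ\gamma_k=\pi_k$ (so $\Cc_k\simeq\Cc/\mathcal A_k$, defined over $\Q$ by Galois-stability), compute the genus via Riemann--Hurwitz, and deduce (iii) from the decomposition of $\Omega^1_{\Cc/\Q}$ into the $\mathcal A_k$-invariant pieces $\pi_k^*(\Omega^1_{\Cc_k/\Q})$, spanned by the $\omega_{i,j}$ with $(k+1)j\equiv i\pmod{\ell}$, together with the dimension count. The only detail to make explicit if you run Hurwitz on the cover $\Cc\to\Cc_k$ is that $\gamma_k$ acts without fixed points, so the cover is unramified (this is how the paper gets $g(\Cc_k)=\tfrac{\ell-1}{2}$); your alternative direct superelliptic genus computation on $v^\ell=u(u+1)^{\ell-k-1}$ sidesteps this and is equally valid.
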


\begin{proof}
Since $\gamma_k$ has order $\ell$ and  $\pi_k\circ \gamma_k=\pi_k$,
 it follows  that $\Cc_k\simeq \Cc/\mathcal A_k$. Due to the fact that
 $\mathcal A_k$ is $\Gal (F/\Q)$-stable,
 the isomorphism  between both curves is defined over $\Q$. The automorphism  $\gamma_k$ has no fixed points and, thus, $\pi_k$
 is unramified. By applying Hurwitz's formula, we deduce that the genus of $\Cc_k$ is equal to $(\ell-1)/2$, yielding  $ii)$.

It can be checked that $\pi_k^*(\Omega^1_{\Cc_k/\Q})=(\Omega^1_{\Cc/\Q})^{\mathcal A_k}$ is the space generated by the differentials $\omega_{i,j}$, where
\begin{equation}\label{equation: differentials}
  2\leq i \leq \ell-1\,, 1\leq j\leq i-1\,,  (k+1)\cdot j\equiv i\pmod{\ell}\,
 \end{equation}
and hence we obtain that
 $$
\Omega^1_{\Cc/\Q}=\bigoplus_{k=2}^{\ell-1}\pi_k^*(\Omega^1_{\Cc_k/\Q})\,,
 $$
and $iii)$ follows.
\end{proof}

For $r,s\geq 1$ with $r+s\leq \ell-1$, define the set $H_{r,s}$ and the group $V_{r,s}$ as
$$
H_{r,s}:=\{ j \in G\,|\, \langle rj\rangle + \langle sj\rangle < \ell \}\,,\qquad
 V_{r,s}:=\{ w\in G \,|\, wH_{r,s}=H_{r,s}\}\,.
$$

\begin{remark}\label{remark: equality H}
Observe that $H_{r,s}=tH_{\langle tr\rangle,\langle ts\rangle}$ for any $t\in G$.
\end{remark}

\begin{theorem}[see Theorem 1 of \cite{KR78},\cite{ST61}]\label{theorem: KR} For $1\leq k\leq \ell-2$, we have
\begin{enumerate}[i)]
\item $\Jac(\Cc_k)\sim_{F} B_k^{| V_{k,1}|}$, where $B_k$ is a simple abelian variety defined over $F$ of dimension $\frac{\ell-1}{2| V_{k,1}|}$ with CM by the fixed subfield $F^ {V_{k,1}}$ and CM type equal to $H_{k,1}/V_{k,1}$;
\item $\Jac(\Cc_k)\sim_{F} \Jac(\Cc_{k'})$ if and only if we have an equality of sets $H_{k,1}=H_{k',1}$.
\item $H_{r,s}=H_{r',s'}$ if and only if $\{r,s,\ell-r-s\}=\{\langle t\rangle ,\langle tr'\rangle ,\langle -t(r'+s')\rangle\}$ for some $t\in G$.
\end{enumerate}
\end{theorem}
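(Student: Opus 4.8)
\textbf{Proof strategy for Theorem \ref{theorem: KR}.}

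The plan is to carry out a concrete Galois-module analysis of the first cohomology (equivalently, of the tangent space / space of regular differentials together with its Hodge decomposition) of the abelian variety $\Jac(\Cc_k)$, exploiting the large abelian group of automorphisms of $\Cc$ and the fact that the construction is defined over $F=\Q(\zeta_\ell)$. First I would recall, following Koblitz--Rohrlich \cite{KR78}, that the full Fermat curve $\Cc$ carries the action of the group $\mu_\ell\times\mu_\ell$ via $(x,y)\mapsto(\zeta_\ell^a x,\zeta_\ell^b y)$, so that $\Jac(\Cc)_F$ acquires an action of $\Q(\zeta_\ell)\otimes\Q(\zeta_\ell)$, and $H^0(\Cc,\Omega^1)$ decomposes into lines indexed by characters; the character appearing on $\omega_{i,j}$ can be read off directly, and the lines that descend through $\pi_k^*$ to $\Cc_k$ are exactly those satisfying the congruence \eqref{equation: differentials}, i.e. they are indexed by the set $H_{k,1}$ (after the change of variables identifying the two parametrizations). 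Thus the CM algebra acting on $\Jac(\Cc_k)_F$ is $\Q(\zeta_\ell)$ with CM type given, up to the identification of characters with elements of $G$, by $H_{k,1}$; one checks $|H_{k,1}|=(\ell-1)/2$, consistent with $\dim\Jac(\Cc_k)=(\ell-1)/2$, and that $H_{k,1}$ and $-H_{k,1}=G\setminus H_{k,1}$ are complementary (this is the CM-type condition), which is immediate from $\langle rj\rangle+\langle(\ell-r)j\rangle=\ell$ for $j\in G$.

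Next, to get the simple factor $B_k$ and statement i), I would determine the actual endomorphism algebra over $F$. The stabilizer $V_{k,1}=\{w\in G : wH_{k,1}=H_{k,1}\}$ is precisely the subgroup of $G=\Gal(F/\Q_{\mathrm{eff}})$-type symmetries preserving the CM type; concretely, $w\in V_{k,1}$ gives an automorphism of $\Cc$ (a power of $\gamma_k$, composed with the extra $\mu_\ell$-symmetry) commuting appropriately, hence an extra endomorphism of $\Jac(\Cc_k)_F$, so that $\Jac(\Cc_k)_F$ is isogenous to a power $B_k^{m}$ with $B_k$ of dimension $(\ell-1)/(2|V_{k,1}|)$ and CM by the subfield $F^{V_{k,1}}$ with CM type the image of $H_{k,1}$ in $G/V_{k,1}$. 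Simplicity of $B_k$ follows because the CM type $H_{k,1}/V_{k,1}$ is \emph{primitive}: if it were induced from a proper subfield, its stabilizer in $G/V_{k,1}$ would be nontrivial, contradicting the maximality of $V_{k,1}$ as the full stabilizer of $H_{k,1}$; a primitive CM type yields a simple CM abelian variety by the standard theory (Shimura--Taniyama \cite{ST61}). This is where I would be most careful: pinning down the isogeny $\Jac(\Cc_k)_F\sim B_k^{|V_{k,1}|}$ requires producing enough genuine endomorphisms from $V_{k,1}$ and knowing there are no more, i.e. that $\End^0(B_k)=F^{V_{k,1}}$ exactly; the ``no more'' direction is again the primitivity statement, so the whole of i) really reduces to this combinatorial maximality of the stabilizer, which is built into the definition of $V_{k,1}$.

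For ii), the point is that two CM abelian varieties with CM by the same field $\Q(\zeta_\ell)$ and with the same CM type (as a subset of $G$) are isogenous over the field where the CM is defined; since the CM type of $\Jac(\Cc_k)_F$ is $H_{k,1}$ and that of $\Jac(\Cc_{k'})_F$ is $H_{k',1}$, we get $\Jac(\Cc_k)\sim_F\Jac(\Cc_{k'})$ iff $H_{k,1}=H_{k',1}$; for the forward direction one notes that the CM type (hence the Hodge structure) is an isogeny invariant and recovers $H_{k,1}$ from the action of $\Q(\zeta_\ell)$ on the tangent space, while the reverse direction is the Shimura--Taniyama construction of a CM abelian variety from a CM type, unique up to isogeny. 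Finally, for iii), I would translate the question ``when is $H_{r,s}=H_{r',s'}$?'' into elementary arithmetic: using Remark \ref{remark: equality H}, $H_{r,s}=tH_{\langle tr\rangle,\langle ts\rangle}$, and noting that $H_{r,s}$ depends only on the unordered data attached to $r,s$ because $\langle rj\rangle+\langle sj\rangle<\ell \iff \langle sj\rangle+\langle rj\rangle<\ell$ and because the complement condition relates $\{r,s\}$ to $\ell-r-s$; one checks that the indicator function of $H_{r,s}$ at $j$ depends only on the orbit of the unordered triple $\{r,s,\ell-r-s\}$ under the simultaneous scaling action of $G$, giving exactly the stated criterion $\{r,s,\ell-r-s\}=\{\langle t\rangle,\langle tr'\rangle,\langle -t(r'+s')\rangle\}$. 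The main obstacle overall is the bookkeeping in step i) — correctly matching the automorphism $\gamma_k$ and the change of variables $(x,y)\mapsto(x^\ell, xy^{\ell-k-1})$ to the character lattice so that the CM type comes out as $H_{k,1}$ and not some twist of it — but once the dictionary between differentials and elements of $G$ is fixed, everything else is either the standard theory of CM abelian varieties or a finite combinatorial check.
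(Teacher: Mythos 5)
You should first note that the paper itself does not prove Theorem~\ref{theorem: KR}: it is imported verbatim from Theorem~1 of \cite{KR78} together with the Shimura--Taniyama theory \cite{ST61}, and the remark immediately after the statement refers to Rohrlich's appendix in \cite{Gro78} for the identification of the lattice-defined varieties $A_{k,1}$ of Koblitz--Rohrlich with $\Jac(\Cc_k)$. Your outline of i) and iii) does follow the route taken in those sources: the eigenspace decomposition of the regular differentials gives the CM type $H_{k,1}$ (cf.\ Remark~\ref{remark: basis differentials}), the stabilizer $V_{k,1}$ and the primitivity of the reduced type $H_{k,1}/V_{k,1}$ produce the simple factor $B_k$ of dimension $\frac{\ell-1}{2|V_{k,1}|}$ with CM by $F^{V_{k,1}}$, and iii) is an elementary combinatorial check.

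There is, however, a genuine gap in your treatment of ii). You argue that two abelian varieties over $F$ with CM by $\Q(\zeta_\ell)$ and the same CM type are $F$-isogenous because the CM abelian variety attached to a CM type is ``unique up to isogeny''. That uniqueness holds over $\C$ (equivalently over $\overline\Q$), not over the fixed number field $F$: over $F$ there exist twists, i.e.\ abelian varieties with identical CM field and CM type whose attached Hecke characters differ by a nontrivial finite-order character, and which are therefore not isogenous over $F$ (this already fails for elliptic curves with CM by the same imaginary quadratic field). So ``same CM type $\Rightarrow$ isogenous over $F$'' is not a formal consequence of CM theory; for these particular varieties one must pin down the Galois action, for instance via the Jacobi-sum Gr\"ossencharaktere $J_{(ka,a)}$ of \S\ref{section: Hecke characters} (the $F$-isogeny class of $\Jac(\Cc_k)_F$ is governed by the multiset $\{J_{(ka,a)}\}_{a\in G}$, and Weil's results determine when these multisets coincide), or via the explicit lattices and isogenies of \cite{KR78}. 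Two smaller instances of the same field-of-definition issue: in i), the isogeny $\Jac(\Cc_k)\sim_F B_k^{|V_{k,1}|}$ is asserted over $F$, so the endomorphisms realizing the splitting must be shown to be defined over $F$ (your appeal to curve automorphisms is the right idea, but it is not carried out); and in the forward direction of ii), an $F$-isogeny need not intertwine the chosen embeddings of $\Q(\zeta_\ell)$, so a priori it only gives $H_{k',1}=tH_{k,1}$ for some $t\in G$, and one needs iii) (with the normalization $s=1$ built into $H_{k,1}$) to upgrade this to the stated equality on the nose.
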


\begin{remark} The previous theorem is stated in \cite{KR78} in terms of abelian varieties $A_{k,1}$ defined by certain lattices $L_{k,1}$. To see that $A_{k,1}$ and $\Jac(\Cc_k)$ coincide we refer the reader to the Appendix of Rohrlich in  \cite{Gro78}.
\end{remark}

Let $S$ and $T$ be the permutations of the set of indices  $\{1,\cdots,\ell-2\}\subseteq \Z$ defined by
 $T(k):=\left\langle \frac{-k}{k+1} \right\rangle$ and $S(T):=\left\langle \frac{1}{k} \right\rangle$, and let $\cM$ be the group of permutations
 generated by $S$ and $T$, which is isomorphic to the dihedral group of six elements. The orbit of $k$ under the action of $\cM$ is
 $$
\cM(k):= \left\{ M(k)\,|\, M\in \cM\right\}=\left\{k,\left\langle\frac{-1}{k+1}\right\rangle,\left\langle \frac{-k-1}{k}\right\rangle,\left\langle \frac{-k}{k+1}\right\rangle,\left\langle -k-1\right\rangle,\left\langle \frac{1}{k}\right\rangle\right\}\,.
 $$
For $\ell>3$, this set has generically six elements, except when either $k\in \{\ell-2,1,(\ell-1)/2\}$, in which case $\cM (k)=\{\ell-2,1,(\ell-1)/2\}$, or $k$ is a primitive cubic root of unity modulo $\ell$, in which case $\cM(k)=\{k, \ell-k-1\}$.

\begin{lemma}\label{lemma: splitting} For $1\leq k,k'\leq\ell-2$, the following statements are equivalent:
\begin{enumerate}[i)]
\item $ \Cc_k\simeq_\Q  \Cc_{k'}$.
\item $ \Jac(\Cc_k)\sim_F  \Jac(\Cc_{k'})$.
\item $k'\in \cM(k)$.
\end{enumerate}
In particular, for $\ell>3$, there are exactly $(\ell+5)/6$ or $ (\ell+1)/6$ isogeny classes among the jacobians of the $\ell-2$ curves $\Cc_k$ depending on whether $\ell\equiv 1\pmod 3$ or not.
\end{lemma}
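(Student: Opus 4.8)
The plan is to prove the chain of implications (i)$\Rightarrow$(ii)$\Rightarrow$(iii)$\Rightarrow$(i) and then to read off the count of isogeny classes from the orbits of $\cM$ on $\{1,\dots,\ell-2\}$. The implication (i)$\Rightarrow$(ii) is immediate: a $\Q$-isomorphism $\Cc_k\simeq_\Q\Cc_{k'}$ induces a $\Q$-isomorphism of Jacobians, hence an isogeny over $F$.

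For (ii)$\Rightarrow$(iii) I would combine parts ii) and iii) of Theorem \ref{theorem: KR}: the relation $\Jac(\Cc_k)\sim_F\Jac(\Cc_{k'})$ is equivalent to $H_{k,1}=H_{k',1}$, which holds if and only if $\{k,1,\ell-k-1\}=\{\langle t\rangle,\langle tk'\rangle,\langle -t(k'+1)\rangle\}$ for some $t\in G$. Regarded as three-element subsets of $G$, this says $\{1,k,-(k+1)\}=t\cdot\{1,k',-(k'+1)\}$. Since $1$ lies in the left-hand set, necessarily $t\in\{1,\langle 1/k'\rangle,\langle -1/(k'+1)\rangle\}$; running through these three possibilities for $t$ and matching in each case the two remaining set elements forces $k'$ to coincide with one of $k$, $\langle 1/k\rangle$, $\langle -k-1\rangle$, $\langle -k/(k+1)\rangle$, $\langle -1/(k+1)\rangle$, $\langle (-k-1)/k\rangle$ — here one uses the involutivity $T^2=\id$ and the explicit list for $\cM(k)$ recalled before the lemma to recognise these six quantities — that is, $k'\in\cM(k)$.

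For (iii)$\Rightarrow$(i), since $\cM$ is generated by $S$ and $T$ it suffices to exhibit $\Q$-isomorphisms $\Cc_k\simeq_\Q\Cc_{S(k)}$ and $\Cc_k\simeq_\Q\Cc_{T(k)}$. I would obtain these from the fractional linear substitutions $u\mapsto 1/u$ and $u\mapsto -1-u$, which permute the branch locus $\{0,-1,\infty\}$ of the cyclic degree-$\ell$ cover $\Cc_k\to\P^1$ defined by \eqref{afi}. After such a substitution the affine equation becomes $\widetilde v^{\,\ell}=u^{a}(u+1)^{b}$, where $\{a,b,\langle -a-b\rangle\}$ is a permutation of the exponent triple $\{1,\ell-k-1,k\}$ (in particular $a$ is a unit modulo $\ell$); raising $\widetilde v$ to the power $\langle 1/a\rangle$ and absorbing the resulting integral (possibly negative) powers of $u$ and $u+1$ — all operations defined over $\Q$, using that $\ell$ is odd so that the sign introduced by $u\mapsto -1-u$ admits an $\ell$-th root in $\Q$ — restores the standard shape $w^{\ell}=u(u+1)^{\ell-k'-1}$, and comparing exponents identifies $k'$ with $S(k)$, respectively $T(k)$. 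Composing these two isomorphisms then gives $\Cc_k\simeq_\Q\Cc_{M(k)}$ for every $M\in\cM$. I expect this to be the step that requires the most care: the exponent bookkeeping after the substitutions, and the check that the resulting birational maps descend to $\Q$; the remaining two implications are either formal or a direct consequence of Theorem \ref{theorem: KR}.

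Finally, the equivalence (i)$\Leftrightarrow$(iii) puts the $\Q$-isomorphism classes of the $\Cc_k$ (equivalently, the $F$-isogeny classes of the $\Jac(\Cc_k)$) in bijection with the orbits of $\cM$ on $\{1,\dots,\ell-2\}$. By the description of $\cM(k)$ recalled above, for $\ell>3$ the only orbits of size less than six are $\{1,\ell-2,(\ell-1)/2\}$ (of size three, always present) and, when $\ell\equiv 1\pmod{3}$, the orbit $\{\rho,\rho^2\}$ formed by the two primitive cubic roots of unity modulo $\ell$ (of size two); a direct check that none of $1$, $\ell-2$, $(\ell-1)/2$ satisfies $x^2+x+1\equiv 0\pmod{\ell}$ for $\ell>3$ shows that these two exceptional orbits are disjoint. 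Counting the $\ell-2$ indices accordingly yields $1+\frac{\ell-5}{6}=\frac{\ell+1}{6}$ orbits when $\ell\not\equiv 1\pmod{3}$ and $1+1+\frac{\ell-7}{6}=\frac{\ell+5}{6}$ orbits when $\ell\equiv 1\pmod{3}$, as claimed.
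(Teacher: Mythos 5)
Your proposal is correct, and two of its three steps coincide with the paper's: (i)$\Rightarrow$(ii) is the same formal observation, and (ii)$\Leftrightarrow$(iii) is exactly the paper's appeal to parts ii) and iii) of Theorem \ref{theorem: KR} — you merely carry out explicitly the case analysis on $t$ that the paper dismisses as a ``straightforward computation'', and your six outcomes do match the displayed list for $\cM(k)$. The genuine difference is in (iii)$\Rightarrow$(i). The paper works \emph{upstairs} on the Fermat curve: it takes the automorphisms $\lambda(x,y)=(-y,-x)$ and $\mu(x,y)=(1/x,y/x)$ of $\Cc$, checks the conjugation relations with $\gamma_k$ (so that each normalizes the relevant cyclic subgroups), and concludes that they descend to $\Q$-isomorphisms $\Cc_{T(k)}\simeq\Cc_k$ and $\Cc_{S(k)}\simeq\Cc_k$; descent to $\Q$ is then automatic. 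You instead work \emph{downstairs} on the quotient models, using the fractional linear substitutions $u\mapsto 1/u$ and $u\mapsto -1-u$ that permute the branch locus $\{0,-1,\infty\}$ of the degree-$\ell$ cyclic cover, followed by the normalization $w=v^{\langle 1/a\rangle}u^m(u+1)^n$; this is legitimate because $a$ is prime to $\ell$, so $v\in\Q(u,w)$ and the change of model is an isomorphism of function fields over $\Q$ (the sign from $u\mapsto -1-u$ being absorbable since $\ell$ is odd), and a birational map of smooth projective curves is biregular. In fact the two proofs produce the same maps: $\mu$ and $\lambda$ induce precisely $u\mapsto 1/u$ and $u\mapsto -1-u$ on $u=x^\ell$, so your argument is the base-level version of the paper's; what the paper's version buys is that descent and regularity are immediate from the group-theoretic relations, while yours is self-contained in the equations of $\Cc_k$ at the cost of the exponent bookkeeping you rightly flag. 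Your orbit count (including the check that the size-three orbit and the size-two orbit of primitive cubic roots are disjoint for $\ell>3$) is correct and is actually more explicit than the paper, which leaves that part to the reader.
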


\begin{proof}
It is obvious that $i)$ implies $ii)$.
It is a straighforward computation to see that $\{1,k,\ell-k-1\}=\{\langle t\rangle ,\langle tk'\rangle ,\langle -t(k'+1)\rangle\}$ for some $t\in G$ if and only if $k'\in \cM(k)$, and then use $ii)$ and $iii)$ of Theorem \ref{theorem: KR}. To see that $iii)$ implies $i)$, let $\lambda,\mu\in \Aut_{\Q} (\Cc)$ be the automorphisms defined by de assignments
$$
\lambda\colon\,\, (x,y)\mapsto (-y,-x)\,,\quad \mu\colon\,\, (x,y)\mapsto ( 1/x,y/x)\,.$$
Since
$$
\gamma_{\langle -k/(k+1)\rangle} ^{-(k+1)}\circ \lambda\circ \gamma_k=\lambda\quad \text{and}\quad
\gamma_{\langle 1/k\rangle} ^{k}\circ \mu\circ \gamma_k=\mu\,,
$$
it follows that $\lambda$ (resp. $\mu$) induces an isomorphism between $\Cc_{\langle -k/(k+1)\rangle}$ (resp.
 $\Cc_{\langle 1/k\rangle}$)  and $\Cc_k$ defined over $\Q$. Thus, $\Cc_{M(k)}$  and $\Cc_k$ are isomorphic over $\Q$ for all $M\in\cM(k)$.
\end{proof}

For $1\leq k \leq \ell-2$, set $M_{k}:=H_{k,1}$ and $W_k:=V_{k,1}$ in order to simplify notation. Let $n_k$ denote the cardinality of $W_k$. Note that we have
\begin{equation}\label{equation: MW}
M_k=\{j\in G\,|\, \langle kj\rangle+\langle j\rangle < \ell \}=\{j\in G\,|\, \langle j\rangle < \langle (k+1)j\rangle \}\,,\qquad
 W_k=\{ w\in G\,|\, wM_k=M_k\}\,.
\end{equation}

\begin{remark}\label{remark: basis differentials}
Observe that the previous description of $M_k$, together with the conditions in (\ref{equation: differentials}), shows that 
$$
\omega_j:=\frac{x^{\langle j\rangle -1}}{y^{\langle (k+1)j\rangle}}dx\qquad \text{with}\qquad j\in M_k
$$
is a basis of $\pi_k^*\Big(\Omega^1_{\Cc_k/\Q}\Big)$.
\end{remark}

\begin{lemma}[see Theorem 2 of \cite{KR78}]\label{lemma: card W}
For $1\leq k \leq \ell-2$, we have that $n_k$ is $1$ or $3$. Moreover, $n_k=3$ if and only if $k$ is a primitive cubic root of unity modulo $\ell$. In this case, we have that
$$
W_k=\{1,k,-k-1\}\subseteq G\,.
$$
\end{lemma}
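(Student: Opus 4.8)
The plan is to identify $W_k$ with the stabilizer of a small set under the multiplication action of $G$ on its subsets, and then conclude by an orbit--stabilizer count. Write $\tau_k:=\{1,\langle k\rangle,\langle -(k+1)\rangle\}\subseteq G$ (a multiset, with a repeated entry precisely when $k\in\{1,(\ell-1)/2,\ell-2\}$). I would first record the elementary facts. The set $W_k$ is a subgroup of $G$, being the stabilizer of $M_k$. Since $\langle 1\rangle=1<k+1=\langle (k+1)\rangle$ we have $1\in M_k$, so $W_k=W_k\cdot 1\subseteq M_k$; as $M_k$ is a union of cosets of $W_k$, $n_k\mid |M_k|$. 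Because $\langle -j\rangle=\ell-\langle j\rangle$ and $\langle -(k+1)j\rangle=\ell-\langle (k+1)j\rangle$, exactly one of $j,-j$ lies in $M_k$ for every $j\in G$; hence $|M_k|=(\ell-1)/2$ and $-M_k=G\setminus M_k\neq M_k$, so $-1\notin W_k$. As $G$ is cyclic and $-1$ is its only element of order $2$, it follows that $n_k$ is odd.

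The heart of the matter is the claim $W_k=\operatorname{Stab}_G(\tau_k)$. From $kj+j-(k+1)j\equiv 0\pmod\ell$ and $\langle j\rangle,\langle kj\rangle,\langle -(k+1)j\rangle\in\{1,\dots,\ell-1\}$ one gets $\langle j\rangle+\langle kj\rangle+\langle -(k+1)j\rangle\in\{\ell,2\ell\}$, equal to $\ell$ exactly when $j\in M_k$; thus $M_k=\{j\in G:\sum_{a\in\tau_k}\langle aj\rangle=\ell\}$, a description symmetric in the entries of $\tau_k$, from which $w\tau_k=\tau_k$ immediately yields $wM_k=M_k$. For the reverse inclusion I would use characters of $G$: setting $f_\sigma(j):=\sum_{a\in\sigma}\langle aj\rangle$, the equality $wM_k=M_k$ is equivalent to $f_{w\tau_k}=f_{\tau_k}$ (both functions take values in $\{\ell,2\ell\}$ with $M_k$ the fibre over $\ell$), while for every character $\psi$ of $G$ one computes $\sum_j\psi(j)f_\sigma(j)=\Lambda(\psi)\sum_{a\in\sigma}\overline{\psi(a)}$ with $\Lambda(\psi):=\sum_{a=1}^{\ell-1}\psi(a)\,a$. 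Since $\Lambda(\psi)=0$ for $\psi$ even and nontrivial, while $\Lambda(\psi)$ is a nonzero multiple of the generalized Bernoulli number $B_{1,\psi}$ (hence nonzero) for $\psi$ odd, the equality $f_{w\tau_k}=f_{\tau_k}$ amounts to the difference of multiplicity functions $\mathbf 1_{w\tau_k}-\mathbf 1_{\tau_k}$ being invariant under $j\mapsto -j$. Splitting this difference into positive and negative parts and using that $w\tau_k$ and $\tau_k$ both have size $3$ forces either $w\tau_k=\tau_k$ or $\tau_k$ to contain a pair $\{b,-b\}$; but no two entries of $\tau_k$ sum to $0$ modulo $\ell$ (the pairwise sums are $1+k$, $-k$, $-1$, none $\equiv 0$ for $1\le k\le\ell-2$), so $w\tau_k=\tau_k$. (Alternatively, this reduction can be extracted from the classification of the sets $H_{r,s}$ that underlies Theorem~\ref{theorem: KR}.)

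Finally I would let the odd-order group $W_k$ act on $\tau_k$: every orbit has odd length $\le 3$, so length $1$ or $3$, and the stabilizer in $W_k$ of $1\in\tau_k$ is trivial (if $w\cdot 1=1$ then $w=1$). If $\tau_k$ has a repeated entry, matching multiplicities in $w\tau_k=\tau_k$ forces $w=1$, so $W_k=\{1\}$ and $n_k=1$; and $k\in\{1,(\ell-1)/2,\ell-2\}$ is never a primitive cube root of unity modulo $\ell$. If $\tau_k$ has three distinct entries, orbit--stabilizer applied to the orbit of $1$ gives $|W_k|\in\{1,3\}$. If $|W_k|=3$ then $W_k=\langle w\rangle$ for some $w$ of order $3$ (a primitive cube root of unity), and $\tau_k$, being the orbit of $1$, equals $\{1,w,w^2\}=\{1,w,-w-1\}$; comparing with $\{1,k,-k-1\}$ forces $k\in\{w,w^2\}$, so $k$ is a primitive cube root of unity and $W_k=\{1,k,-k-1\}$. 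Conversely, if $k$ is a primitive cube root of unity then $\tau_k=\{1,k,k^2\}$ is the order-$3$ subgroup of $G$, which stabilizes itself, so $\{1,k,-k-1\}\subseteq W_k$ and $n_k=3$ by the dichotomy just proved.

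The step I expect to be the real obstacle is the implication $wM_k=M_k\Rightarrow w\tau_k=\tau_k$. The character computation reduces it cleanly, but its only non-elementary ingredient is the nonvanishing of $\Lambda(\psi)$ for odd $\psi$ (equivalently $B_{1,\psi}\neq 0$); for a fully self-contained preliminaries section the alternative is to invoke the classification of the $H_{r,s}$ from \cite{KR78} behind Theorem~\ref{theorem: KR}. Everything else — the subgroup structure, the parity of $n_k$, and the orbit--stabilizer dichotomy — is routine.
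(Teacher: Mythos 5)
Your proof is correct, and it takes a genuinely different route from the paper's for the one nontrivial step. The paper gets the implication $wM_k=M_k\Rightarrow\{1,k,\ell-k-1\}=\{\langle w\rangle,\langle wk\rangle,\langle -w(k+1)\rangle\}$ essentially for free by quoting part iii) of Theorem \ref{theorem: KR} (via Remark \ref{remark: equality H}), and then finishes with a short case analysis ($w\equiv k$ or $w\equiv -k-1$, whence $w^2+w+1\equiv 0$); its converse is the same direct inequality computation you use. You instead prove the stabilizer identification $W_k=\operatorname{Stab}_G(\{1,\langle k\rangle,\langle -(k+1)\rangle\})$ from scratch: the symmetric description $M_k=\{j:\langle j\rangle+\langle kj\rangle+\langle-(k+1)j\rangle=\ell\}$ gives the easy inclusion, and the hard one is handled by Fourier analysis on $G$, using that $\sum_a\psi(a)\langle a\rangle=\ell B_{1,\psi}$ vanishes exactly for even nontrivial $\psi$ and is nonzero for odd $\psi$, so that invariance of $M_k$ forces $\mathbf 1_{w\tau_k}-\mathbf 1_{\tau_k}$ to be even, which is impossible unless $w\tau_k=\tau_k$ since no two of $1,k,-(k+1)$ sum to zero modulo $\ell$; the endgame is orbit--stabilizer together with the oddness of $n_k$, and your multiset bookkeeping for the special values $k\in\{1,(\ell-1)/2,\ell-2\}$ is complete. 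The trade-off: the paper's argument is shorter because Theorem \ref{theorem: KR} is already available, but it outsources the real content to \cite{KR78}; yours is self-contained except for $B_{1,\psi}\neq 0$ for odd $\psi$ (an input the paper itself invokes, cf.\ the footnote to Proposition \ref{proposition: calculet brillant} and the class number formula), and it in effect reproves the special case of the Koblitz--Rohrlich classification that the lemma needs, in the same character/Bernoulli spirit as \S\ref{section: degenerate primes}. I see no gap.
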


\begin{proof}
We will show that for $1\leq k \leq \ell-2$, the cardinality of $V_{k,1}$ is $1$ or $3$. Suppose that $w\not = 1$ lies in $V_{k,1}$, that is, $w H_{k,1}=H_{k,1}$. Note that by Remark \ref{remark: equality H} this means that $H_{w k,w}=H_{k,1}$. We will show that both $w$ and $k$ are primitive cubic roots of unity modulo~$\ell$. By Theorem \ref{theorem: KR} part iii), we have
$$
\{1,k,\ell-k-1\}=\{\langle w\rangle ,\langle w k\rangle,\langle -w(k+1)\rangle\}\,.
$$
Therefore either $w \equiv k\pmod \ell$ or $w\equiv -k-1\pmod \ell$. Suppose first that $w\equiv k \pmod \ell$. Then
$$
\{1,w,\ell-w-1\}=\{\langle w\rangle ,\langle w^2\rangle,\langle -w^2-w\rangle\}\,.
$$
Then either $1\equiv w^2 \,\pmod \ell$ or $1\equiv -w-w^2 \,\pmod \ell$. The first option is impossible since it implies that $w\equiv -1\, \pmod \ell$, and thus $k+1\equiv 0\, \pmod \ell$, which is a contradiction. Thus $w^2+w+1\equiv 0\,\pmod \ell$ as desired. The case $w\equiv -k-1 \pmod \ell$ is completely analogous. Conversely, assume that $k$ is a primitive cubic root of unity. We want to see that if $j\in M_k$, then also $kj\in M_k$. This follows from
$$
\langle kj\rangle+ \langle k^2 j\rangle=\langle kj\rangle+ \langle -(k+1) j\rangle=\langle kj\rangle+ \ell-\langle k j\rangle-\langle j\rangle <\ell\,.
$$ 
\end{proof}

\begin{remark}
We can now show that $B_k$ may be taken as the Jacobian of a curve defined over~$\Q$. Assume that $k$ is a primitive cubic root of unity modulo $\ell$ (otherwise $n_k=1$, and there is nothing to prove). Let $t\in \Z$ be such that $1+k+k^2=\ell t$. The automorphism $\nu:=\mu\circ\lambda$ of $\mathcal C$ ($\lambda$ and $\mu$ as in the proof of Lemma \ref{lemma: splitting}) induces an automorphism of $\mathcal C_k$ of order $3$ with two fixed points, which is explicitly given by the assignment
$$
(u,v)\mapsto \left(-\frac{u+1}{u}, (-1)^{k-1}\frac{(1+u)^{t-k}}{u}v^k \right)\,.
$$
Hurwitz formula shows that the genus of the quotient curve of $\mathcal C_k$ by the subgroup generated by $\nu$ has genus $\frac{\ell-1}{6}$. Although $B_k$ can be taken over $\Q$, the isogeny $\Jac(\Cc_k)\sim_F B_k^3$ does not come from an isogeny defined over $\Q$. Indeed, Theorem \ref{theorem: intro} shows that the local factor of $\Cc_k$ at a prime of residue degree $\ell-1$ in $F$ can not be a cube.
\end{remark}

\subsection{Hecke characters attached to the quotient curves $\Cc_k$}\label{section: Hecke characters}

For a prime~$\p$ in~$F$ not lying over~$\ell$ and $x\in \F_\p^*$, with $\F_\p:=\mathcal O_F/\p$, let $\chi_\p(x)\in F^*$ be the only $\ell$th root of unity satisfying the condition
$\pi(\chi_{\p}(x))= x^{\frac{N\p-1}{\ell}}$, where $\pi\colon \mathcal O_F\rightarrow \F_\p$ is the residue map. Then, $\chi_\p\colon \F_\p^*\rightarrow  F^*$ is a character of $\F_\p^*$ of order $\ell$ that we extend to $\F_\p$ by defining $\chi_\p(0):=0$. Let $I_F(\ell)$ stand for the set of all fractional ideals of $\mathcal O_F$ coprime to $\ell$.
Then for any $a \in G$ and $1\leq k \leq \ell-2$, consider the map $J_{{(ka,a)}}\colon I_F(\ell)\rightarrow \C^*$, defined on prime ideals $\p$ of $I_F(\ell)$ by
$$
 J_{{(ka,a)}}(\p):=-\sum_{v\in\F_\p}\chi_\p(v)^{ka}\chi_\p(v+1)^a\,,
$$
and extended to any ideal of $I_F(\ell)$ by the rule
$$
J_{{(ka,a)}}(\mathfrak a\mathfrak b)=J_{{(ka,a)}}(\mathfrak a)J_{{(ka,a)}}(\mathfrak b)\,.
$$
If $\sigma_t\in \Gal(F/\Q)$ and $\p$ is a prime ideal in $I_F(\ell)$, then the following properties are satisfied
\begin{equation}\label{equation: comm}
{}^{\sigma_t}( J_{{(ka,a)}}(\p))= J_{{(ka,a)}}({}^{\sigma_t}\p),\quad
{}^{\sigma_t}( J_{{(ka,a)}}(\p))= J_{{(tka,tk)}}(\p),\quad
J_{{(ka,a)}}(\p)\mathcal O_F=\prod_{j\in M_k}{}^{\sigma_{aj^{-1}}}\p\,.
\end{equation}

It is easy to check the first equality, and the second and third are equalities $(11)$ and $(9)$ of \cite{Wei52}, respectively (see also \cite{Sti90}).
Weil \cite{Wei52} showed that  $J_{{(ka,a)}}$ is a Gr\"ossencharakter of infinity type $aM_k^{-1}:=\{\langle aj^{-1}\rangle\,|\, j \in M_k\}$ and modulus $\ell^2\mathcal O_F$, that is,
\begin{equation}\label{equation: inf type}
J_{{(ka,a)}}(\alpha\mathcal O_F)=\prod_{j\in M_k} {}^{\sigma_{aj^{-1}}}(\alpha)\qquad \text{for all }\alpha\in F^*\text{ with }\alpha\equiv^{\times} 1 \pmod{\ell^2}.\footnote{For $\alpha,\beta \in F^*$ and an ideal $\mathfrak m\subseteq \mathcal O_F$, we write $\alpha \equiv^{\times} \beta \pmod {\mathfrak m}$ if $\alpha$ and $\beta$ are multiplicatively congruent modulo $\mathfrak m$, that is, if $v(\alpha/\beta-1)\geq v(\mathfrak m)$ for every discrete valuation $v$ of $F$.}
\end{equation}
Later Hasse \cite{Has55} showed that the conductor of $J_{{(ka,a)}}$ is $(1-\zeta_\ell)\mathcal O_F$ or $(1-\zeta_\ell)^2\mathcal O_F$ depending on whether $\sum_{j\in M_k}j^{-1}$ is zero or not modulo $\ell$.
The weight of $J_{(ka,a)}$ is $1$, since exactly one of $j$ and $-j$ belongs to $aM_k^{-1}$ and thus $J_{(ka,a)}(\mathfrak a)J_{(ka,a)}(\overline{\mathfrak a})=N\mathfrak a$ for every $\mathfrak a\in I_F(\ell)$.
Moreover, the $\ell'$-adic rational Tate module of $\Jac(\Cc_k)$, simply denoted $V_{\ell'}(\Cc_k)$, admits a decomposition of $G_F$-modules
\begin{equation}\label{equation: del dec}
V_{\ell'}(\Cc_k)=\bigoplus_{a\in G}V_{{(ka,a)}}\,,
\end{equation}
where $V_{{(ka,a)}}$ is a $1$-dimensional $\Q_{\ell'}$-vector space, on which the action of an arithmetic Frobenius $\Frob_\p$ at $\p\nmid \ell$ is by multiplication of $J_{{(ka,a)}}(\p)$ (see Deligne \cite[\S7]{Del82} for a general result on the decomposition of the middle \'etale cohomology group of a Fermat hypersurface). In terms of $L$-functions this amounts to saying that
$$
L((\Cc_k)_{F},s)=\prod_{a\in G}L(J_{{(ka,a)}},s)\,,
\qquad \text{where } 
L(J_{{(ka,a)}},s)=\prod_{\p\nmid\ell}\left(1-\frac{J_{{(ka,a)}}(\p)}{N\p^s}\right)^{-1}\,.
$$

\begin{remark}\label{remark: CM inf type}
This may also be seen as a consequence of the theory of complex multiplication, which predicts that the reflex CM type $M_k^{-1}$ of $\Jac(\Cc_k)$ coincides with the infinity type of the Hecke character attached to its $L$-function.
\end{remark}

\begin{lemma}\label{lemma: Lfunctions}
For any $a\in G$, one has
$$
L((\Cc_k)_{F},s)=L(J_{{(ka,a)}},s)^{\ell-1}\,,\qquad  L(\Cc_k,s)=L(J_{{(ka,a)}},s)\,.
$$
\end{lemma}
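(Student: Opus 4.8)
The plan is to deduce both identities from the $G_F$-module decomposition \eqref{equation: del dec} together with the Galois-conjugacy relations \eqref{equation: comm} for the Jacobi-sum Gr\"ossencharaktere. First I would establish the first identity: from \eqref{equation: del dec} we immediately get
$$
L((\Cc_k)_F,s)=\prod_{a\in G}L(J_{(ka,a)},s),
$$
so it suffices to show that the Euler factor $L(J_{(ka,a)},s)$ is independent of $a\in G$. Fix $a,b\in G$ and choose $t\in G$ with $ta=b$ (i.e.\ $t=ba^{-1}$). Using the middle relation of \eqref{equation: comm}, for every prime $\p\nmid\ell$ we have ${}^{\sigma_t}\bigl(J_{(ka,a)}(\p)\bigr)=J_{(tka,ta)}(\p)=J_{(kb,b)}(\p)$; combining this with the first relation ${}^{\sigma_t}\bigl(J_{(ka,a)}(\p)\bigr)=J_{(ka,a)}({}^{\sigma_t}\p)$ gives $J_{(kb,b)}(\p)=J_{(ka,a)}({}^{\sigma_t}\p)$. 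Since $\p\mapsto{}^{\sigma_t}\p$ permutes the primes of $F$ of a given residue degree and preserves norms, reindexing the Euler product over all $\p\nmid\ell$ shows $L(J_{(kb,b)},s)=L(J_{(ka,a)},s)$. As $b$ was arbitrary, all $\ell-1$ factors coincide, and the first displayed identity follows.

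For the second identity, the point is that $L(\Cc_k,s)$ is by definition the $L$-function of $\Jac(\Cc_k)$ over $\Q$, i.e.\ the $L$-function of the $G_\Q$-representation on $V_{\ell'}(\Cc_k)$, whereas $L((\Cc_k)_F,s)$ is attached to the restriction of this representation to $G_F$. By Artin formalism, if $\rho$ denotes the $(\ell-1)$-dimensional $\ell'$-adic representation of $G_\Q$ on $V_{\ell'}(\Cc_k)$, then $L((\Cc_k)_F,s)=L(\mathrm{Ind}_{G_F}^{G_\Q}\mathrm{Res}_{G_F}\rho,s)=\prod_{\sigma_t\in\Gal(F/\Q)}L(\rho\otimes\varepsilon^t,s)$, where $\varepsilon$ is a character of $G_\Q$ cutting out $F/\Q$ (of order $\ell-1$) and we use that $F/\Q$ is abelian. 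I would instead argue more concretely: the decomposition \eqref{equation: del dec} is a decomposition of $G_F$-modules into Jacobi-sum eigenlines, and $G_\Q$ permutes these lines transitively via $\sigma_t\colon V_{(ka,a)}\to V_{(tka,ta)}$ (this is exactly the content of the second relation in \eqref{equation: comm}). Hence $V_{\ell'}(\Cc_k)\cong\mathrm{Ind}_{G_F}^{G_\Q}V_{(k,1)}$ as $G_\Q$-modules, so by Artin formalism $L(\Cc_k,s)=L(\mathrm{Ind}_{G_F}^{G_\Q}V_{(k,1)},s)=L(V_{(k,1)},s)=L(J_{(k,1)},s)$; and by the first part $L(J_{(k,1)},s)=L(J_{(ka,a)},s)$ for every $a\in G$, which gives the claim.

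The main obstacle is purely bookkeeping: one must check carefully that the $G_\Q$-action on the index set $\{(ka,a):a\in G\}$ of eigenlines in \eqref{equation: del dec} is indeed simply transitive and matches the Galois action on the line labels given by \eqref{equation: comm}, and that the stabilizer of the line $V_{(k,1)}$ is precisely $G_F$ — equivalently, that $J_{(ka,a)}$ and $J_{(k,1)}$ are genuinely different Hecke characters for $a\neq 1$, not merely different labels. For this last point one can invoke that they have different infinity types $aM_k^{-1}\neq M_k^{-1}$ (by \eqref{equation: inf type}), unless $aM_k=M_k$, i.e.\ $a\in W_k$; but the lines $V_{(ka,a)}$ for distinct $a$ are genuinely distinct as summands of $V_{\ell'}(\Cc_k)$ by Deligne's decomposition regardless, so the induced-representation identification is valid on the level of $G_F$-modules and the $L$-function computation goes through. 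Once this is in place, everything else is Artin formalism and the invariance of Euler products under the finite set of Galois twists, which is routine.
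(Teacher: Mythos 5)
Your proof of the first identity is essentially the paper's: both deduce $L((\Cc_k)_F,s)=\prod_{a\in G}L(J_{(ka,a)},s)$ from \eqref{equation: del dec} and then use \eqref{equation: comm} to reindex the Euler product by ${}^{\sigma_t}\p$, so that part is fine. For the second identity you diverge from the paper, which never claims an induced-module structure: it instead writes $L((\Cc_k)_F,s)=\prod_{\chi}L(\Jac(\Cc_k)\otimes\chi,s)$ over the characters of $\Gal(F/\Q)$ and shows each twist has the same $L$-function by the elementary point count $|\Cc_k(\F_p)|=p+1$ for $p\not\equiv 1\pmod\ell$. Your route through $V_{\ell'}(\Cc_k)\simeq \Ind_{G_F}^{G_\Q}V_{(k,1)}$ would be cleaner if it worked in all cases, but as justified it has a genuine gap exactly in the case you try to wave away, namely when $k$ is a primitive cubic root of unity modulo $\ell$ (so $W_k=\{1,k,k^2\}$ and $n_k=3$ by Lemma \ref{lemma: card W}; e.g.\ $(\ell,k)=(7,2)$ or $(13,3)$, which the paper explicitly uses).

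The problem: in that case the Hecke characters $J_{(ka,a)}$ genuinely coincide in $W_k$-cosets (their infinity types $aM_k^{-1}$ agree for $a\in W_k$, and the isogeny $\Jac(\Cc_k)\sim_F B_k^{3}$ of Theorem \ref{theorem: KR} forces the $\ell-1$ characters to take only $(\ell-1)/3$ values, each with multiplicity $3$; one can also see $J_{(k^2,k)}=J_{(k,1)}$ directly from the Gauss-sum expression of Jacobi sums). Consequently the $G_F$-isotypic components of $V_{\ell'}(\Cc_k)$ are $3$-dimensional, and relation \eqref{equation: comm}, which is a statement about character \emph{values}, only tells you that $\tau V_{(k,1)}$ is a line inside the correct isotypic component — not that it is the summand $V_{(kt,t)}$ of \eqref{equation: del dec}, nor that the three translates indexed by a coset of $W_k$ are linearly independent. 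Your patch ("the lines are distinct as summands, so the identification is valid regardless") is not a valid inference: there exist non-isomorphic $G_\Q$-modules (for instance $\Ind_{G_F}^{G_\Q}\chi$ versus $(\Ind_{H}^{G_\Q}\tilde\chi)^{\oplus 3}$, where $H\supset G_F$ is the index-$(\ell-1)/3$ subgroup corresponding to $W_k$ and $\tilde\chi$ extends $\chi$) whose restrictions to $G_F$ admit identical eigenline decompositions satisfying \eqref{equation: del dec} and \eqref{equation: comm}, and which have different $L$-functions over $\Q$; indeed the Remark after Lemma \ref{lemma: card W} stresses that the isogeny $\Jac(\Cc_k)\sim_F B_k^3$ does \emph{not} descend to $\Q$, so this distinction is exactly the content at stake. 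To repair your argument you would need extra input pinning down where $G_\Q$ sends the specific line $V_{(k,1)}$ — for example that the $V_{(ka,a)}$ are the eigenlines, with pairwise distinct eigenvalues $\zeta_\ell^{a}$, of the order-$\ell$ automorphism $\alpha$ of $(\Cc_k)_F$, and that ${}^{\tau}\alpha=\alpha^{t}$ for $\tau|_F=\sigma_t$, which forces $\tau V_{(ka,a)}$ to be a specific eigenline and yields the induced structure — or simply argue as the paper does, which sidesteps the issue entirely. (When $W_k$ is trivial the characters are pairwise distinct, the isotypic spaces are lines, and your argument is complete as written.)
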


\begin{proof} By (\ref{equation: comm}), we have that for every rational prime $p$
$$
\prod_{\p|p}\prod_{t\in G}\left(1-\frac{J_{{(kat,at)}}(\p)}{N\p^{s}}\right)^{-1}=\prod_{\p|p}\prod_{\sigma\in \Gal(F/\Q)}\left(1-\frac{J_{(ka, a)}({}^{\sigma}\p)}{(N{}^{\sigma}\p)^{s}}\right)^{-1}=\prod_{\p|p}\left(1-\frac{J_{(ka, a)}(\p)}{N\p^{s}}\right)^{-(\ell-1)}\,,
$$
from which the first assertion follows. For the second, note that from the Artin formalism we get the equality
$$
L((\Cc_k)_{F},s)=\prod_{\chi\colon\Gal(F/\Q)\rightarrow \C^*}L(\Jac(\Cc_k)\otimes\chi,s)\,.
$$
Therefore, it is enough to show that $L(\Jac(\Cc_k)\otimes\chi,s)=L(\Jac(\Cc_k),s)$ for every character $\chi\colon\Gal(F/\Q)\rightarrow \C^*$, or equivalently, that $V_{\ell'}(\Cc_k)\otimes\chi\simeq V_{\ell'}(\Cc_k)$ as $G_\Q$-modules. For this last isomorphism it is enough to show that 
$$
\Tr V_{\ell'}(\Cc_k)(\Frob_p)=0
$$
for every prime $p\nmid \ell$ and $p\not\equiv 1\pmod \ell$, or equivalently that $|\Cc_k(\F_p)|=p+1$ for every such prime. But it is clear that 
$$
\Cc_k\colon v^\ell=u(u+1)^{\ell-k-1}
$$ 
has exactly $p$ affine points defined over $\F_p$ if $p\not\equiv 1 \pmod\ell$, since in this case exponentiation by $\ell$ is an isomorphism of $\F_p^*$, and then every value of $u\in\F_p$ uniquely determines  the value of $v\in\F_p$.
\end{proof}

\section{The generalized Sato-Tate Conjecture for $\Jac(\Cc_k)$}\label{section: ST group}

In \S\ref{section: simple factors}, we have seen that $\Jac(\Cc_k)$ decomposes over $F$ as $B_k^{n_k}$, where $B_k$ is an abelian variety with complex multiplication by $F_k:=F^{W_k}$ and primitive CM type equal to $M_k/W_k$. The results of this section, will hold for pairs $(\ell,k)$ such that $M_k/W_k$ is non-degenerate, notion that we will now recall. 

\begin{definition}\label{definition: Demjanenko matrix}
For $k\in\{1,\dots,\ell-2\}$ and for $a\in G$, define
$$
E_k(a):=
\begin{cases} 0 & \text{if $a\in M_k$,}\\
1 & \text{if $a\not\in M_k$.}\\
\end{cases}
$$ 
The generalized $k$-Demjanenko matrix is defined as $D_k:=\left(E_k(-c^{-1}a)-\frac{1}{2}\right)_{c,a\in M_k/W_k}$. We will denote the size of $D_k$ by $r_k:=\frac{\ell-1}{2n_k}$ (recall that $n_k=|W_k|$).
\end{definition}

The notion of rank of $M_k/W_k$ was first introduced by Kubota \cite{Kub65}. It is by definition the rank of $\Phi^*_k(\Z[G/W_k])$, where
$$
\Phi^*_k\colon \Z[G/W_k]\rightarrow \Z[G/W_k]\,,\qquad\Phi^*_k([a])=\sum_{b\in M_k/W_k}[b^{-1}][a]=\sum_{c\in G/W_k}E_k(-c^{-1}a)[c]\,. 
$$
Note that $\Phi^*_k$ is well defined precisely because $W_k$ is the subgroup of $G$ fixing $M_k$.
\begin{remark}
Write $X(T_{F_k}):=\Z[G/W_k]$ for the character group of the torus $T_{F_k}$ defined by~$F_k^*$. We may see $\Phi^*_k\colon X(T_{F_k})\rightarrow X(T_{F_k})$ as a map between character groups induced by a map $\Phi_k\colon T_{F_k}\rightarrow T_{F_k}$ between algebraic tori (see \cite{Rib80}). 
\end{remark}

\begin{lemma}[see Lemma 1 of \cite{Kub65}]\label{lemma: image phik} The rank of $M_k/W_k$ is equal to $\rk (D_k)+1$.
\end{lemma}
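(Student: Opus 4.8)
The plan is to compare the rank of the image $\Phi^*_k(\Z[G/W_k])$ with the rank of $D_k$ by exhibiting $\Phi^*_k$ (up to an obvious correction) as a block-matrix built out of $D_k$. First I would pick as an ordered $\Z$-basis of $X(T_{F_k})=\Z[G/W_k]$ the classes $[c]$ with $c$ ranging over $G/W_k$, and observe that the structure constants of $\Phi^*_k$ in this basis are exactly the integers $E_k(-c^{-1}a)\in\{0,1\}$. Thus in this basis $\Phi^*_k$ is represented by the $(2r_k)\times(2r_k)$ matrix $E:=\big(E_k(-c^{-1}a)\big)_{c,a\in G/W_k}$, and the assertion of the lemma is the purely matrix-theoretic statement that $\rk(E)=\rk(E-\tfrac12 J)+1$, where $J$ is the all-ones matrix and $D_k$ is the submatrix of $E-\tfrac12 J$ indexed by $M_k/W_k$. (Over $\Q$ it is harmless to rescale; the factor $-\tfrac12$ versus $\tfrac12$ is immaterial for the rank.)

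The next step is to understand the relation between the index sets $M_k/W_k$ and its complement. The key combinatorial input is that for each $a\in G$ exactly one of $a,-a$ lies in $M_k$ (this is the weight-one observation made for the Hecke character $J_{(ka,a)}$, equivalently the statement that $H_{k,1}$ and $-H_{k,1}$ partition $G$); hence $G/W_k$ is the disjoint union of $M_k/W_k$ and $-M_k/W_k$, each of size $r_k$, and the involution $a\mapsto -a$ is a bijection between the two halves. Using this, the entries satisfy $E_k(a)+E_k(-a)=1$, so the matrix $E$ has the "complementary" symmetry $E_{c,a}+E_{-c,a}=1$ for all $c,a$; equivalently, writing $E$ in the $2\times 2$ block form according to the splitting $G/W_k=(M_k/W_k)\sqcup(-M_k/W_k)$ and reindexing the second block by negation, each off-diagonal block is $J$ minus the corresponding diagonal block. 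One checks (by symmetry of $E_k(-c^{-1}a)$ under $(c,a)\mapsto(a,c)$ combined with the involution) that in fact all four blocks are determined by a single $r_k\times r_k$ matrix: $E=\begin{pmatrix}A & J-A^{t}\\ J-A & A^{t}\end{pmatrix}$ after the negation reindexing, where $A=(E_k(-c^{-1}a))_{c,a\in M_k/W_k}$ and $D_k=A-\tfrac12 J$.

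With this block form in hand the rank computation is elementary linear algebra over $\Q$: subtract block-row 1 from block-row 2 to get $\begin{pmatrix}A & J-A^t\\ J-A & J\end{pmatrix}\rightsquigarrow$ rows $(A,J-A^t)$ and $(J, 2J-A-A^t)$; then symmetrically column-reduce, and track how the all-ones vector $\mathbf 1$ interacts with the columns of $D_k=A-\tfrac12 J$. The clean way is: over $\Q$, $E$ is conjugate (via the block matrix $\mathrm{diag}(I,-I)$ times elementary operations mixing the blocks) to $\begin{pmatrix}2D_k & 0\\ 0 & ?\end{pmatrix}$ plus a rank-one correction coming from $\mathbf 1\mathbf 1^t$; counting dimensions gives $\rk(E)=\rk(D_k)+1$ provided the rank-one piece is genuinely independent of the column space of $D_k$, which holds because $D_k\mathbf 1=0$ is never true while $\mathbf 1$ does lie in the row space of $E$ (the constant function is $\Phi^*_k$-relevant since $\sum_{a}E_k(-c^{-1}a)=|M_k/W_k|=r_k$ is independent of $c$). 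I expect the main obstacle to be bookkeeping: getting the block form of $E$ exactly right — in particular pinning down that all four blocks are expressible through the single matrix $A$ and its transpose, and then carefully verifying that the "$+1$" comes from the one-dimensional space spanned by $\mathbf 1$ being a new direction not already accounted for by $\rk(D_k)$, rather than an off-by-one error. Once the block structure is correctly set up, the rank identity $\rk(E)=\rk(D_k)+1$ drops out of row/column reduction with no further ingredients.
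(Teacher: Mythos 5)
Your strategy is the same as the paper's: represent $\Phi_k^*$ by the $0$--$1$ matrix $\big(E_k(-c^{-1}a)\big)_{c,a\in G/W_k}$, split $G/W_k=(M_k/W_k)\sqcup(-M_k/W_k)$, use $E_k(x)+E_k(-x)=1$ to write the matrix in $2\times 2$ block form through $D_k$, and block-reduce. The gap is in the block form itself: your claim that $E_k(-c^{-1}a)$ is symmetric under $(c,a)\mapsto(a,c)$ is false, because $M_k$ need not be stable under inversion. For instance, for $(\ell,k)=(11,1)$ one has $M_1=\{1,2,3,4,5\}$ and $W_1=\{1\}$; taking $c=3$, $a=5$ gives $E_1(-3^{-1}\cdot 5)=E_1(2)=0$ while $E_1(-5^{-1}\cdot 3)=E_1(6)=1$. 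Consequently the matrix of $\Phi_k^*$ is not $\begin{pmatrix}A & U-A^{t}\\ U-A & A^{t}\end{pmatrix}$ (writing $U$ for the all-ones $r_k\times r_k$ block); the correct form, which follows from $E_k(x)+E_k(-x)=1$ alone and needs no symmetry, is $\begin{pmatrix}A & U-A\\ U-A & A\end{pmatrix}$ with $A=D_k+\tfrac12 U$ in both diagonal positions — exactly the paper's starting point. The error is consequential for your proof as written: running your reduction on the transposed version leaves $\tfrac12(A+A^{t})-\tfrac12U=\tfrac12(D_k+D_k^{t})$ in the surviving corner, and there is no reason the rank of this symmetrization should equal $\rk(D_k)$, so the identity $\rk(E)=\rk(D_k)+1$ does not follow from your computation.

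The second soft spot is how you extract the ``$+1$''. With the correct block form, adding the first block row to the second, then the first block column to the second, then subtracting half the second block column from the first yields $\begin{pmatrix}D_k & U\\ 0 & 2U\end{pmatrix}$, and the extra unit of rank is automatic from the block-triangular shape: the one new independent column has nonzero lower block, whereas every $D_k$-column has zero lower block. No assertion like ``$D_k\mathbf{1}=0$ never holds'' is needed, and the reasons you offer for it (that $\mathbf{1}$ lies in the row space of $E$) neither prove that assertion nor bear on independence in the column space. Once you replace your block decomposition by the correct one, the rest of your reduction is exactly the paper's proof and the lemma follows.
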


\begin{proof}
Let $A=(E_k(-c^{-1}a))_{c,a\in G/W_k}$ denote the matrix of $\Phi_k^*$ in the basis $M_k/W_k\cup M_{-k}/W_k$. If we write $U$ for the  $r_k\times  r_k$ matrix whose entries are all ones, we obtain
$$
A=
\begin{pmatrix}
D_k+\frac{1}{2}U & \frac{1}{2}U-D_k\\
\frac{1}{2}U-D_k & D_k+\frac{1}{2}U
\end{pmatrix}\sim
\begin{pmatrix}
D_k+\frac{1}{2}U & \frac{1}{2}U-D_k\\
U  & U
\end{pmatrix}\sim
\begin{pmatrix}
D_k+\frac{1}{2}U & U \\
U  & 2U
\end{pmatrix}\sim
\begin{pmatrix}
D_k & U \\
0  & 2U
\end{pmatrix}\,.
$$
\end{proof}

We say that $M_k/W_k$ is \emph{non-degenerate} if its rank is $r_k+1$, equivalently, if $\rk (D_k)$ is maximal, that is, if $D_k$ has determinant distinct from zero; we say that the pair $(\ell,k)$ is non-degenerate if $M_k/W_k$ is non-degenerate; and we say that $\ell$ is non-degenerate if $(\ell,k)$ is non-degenerate for every $1\leq k\leq \ell-2$. 

\begin{remark}\label{remark: Stark Lenstra} For example, the degenerate primes $\ell$ with $3<\ell < 400$ are

$$
67,\, 127, \, 139,\, 151,\,157,\,163,\,199,\,211,\,223,\,271,\,277,\,283,\,307,\,331,\,367,\,379,\,397\,.
$$
Observe that Theorem \ref{theorem: carac} implies that any prime $\ell\equiv 2\,\pmod{3}$ is non-degenerate. Lenstra and Stark noticed that every sufficiently large prime $\ell \equiv 7 \pmod{12}$ is degenerate (see \cite[p. 354]{Gre80}). In fact, every prime $\ell \equiv 7 \pmod{12}$ distinct from $7$ and $19$ is degenerate (see \cite{FS15} for this and also for an answer to the question concerning the proportion of $1\leq k\leq \ell-2$ for which $(\ell,k)$ is degenerate).
\end{remark}

We refer the reader to \S\ref{section: degenerate primes} for a more explicit characterization of the degenerate pairs $(k,\ell)$.

\subsection{The algebraic Sato-Tate group of $\Jac(\Cc_k)$}

Let us start by fixing some notation. Set
$$
I_2:=
\begin{pmatrix}
1 & 0\\
0 & 1
\end{pmatrix}\,,\qquad
J_2:=
\begin{pmatrix}
0 & 1\\
-1 & 0
\end{pmatrix}\,.
$$
For any $m\geq 1$, the symplectic group $\Sp_{2m}/\Q$ is taken with respect to the symplectic form given by the block matrix
$$
J:=\diag(J_2,\stackrel{m}\ldots,J_2)\,.
$$
We have a diagonal embedding
$$
\iota_{n_k}\colon \Sp_{2r_k}\hookrightarrow \Sp_{\ell-1}\,,\qquad \iota_{n_k}(A)= \diag(A, \stackrel{n_k}\ldots ,A)\,.
$$ 
Recall that $\Jac(\Cc_k)\sim_FB_{k}^{n_k}$. Let us write $A_k:=\Jac(\Cc_k)$ to shorten notation. For a prime $\ell'$, let 
$$
\varrho\colon G_\Q\rightarrow \Aut(V_{\ell'}(A_{k}))\,
$$ 
denote the $\ell'$-adic representation attached to $A_{k}$.
Write $G_{\ell'}:=\varrho(G_\Q)$ and let $G_{\ell'}^{Zar}$ denote its Zariski closure.
Let $G_{\ell'}^1:=G_{\ell'}\cap \Sp_{\ell-1}$ and let $G_{\ell'}^{1,Zar}$ be its Zariski closure. One may also obtain $G_{\ell'}^{1,Zar}$ as $G_{\ell'}^{Zar}\cap \Sp_{\ell-1}$ (see \cite[\S2]{BK11}).

The algebraic Sato-Tate Conjecture for $A_{k}$ predicts the existence of an algebraic subgroup $\AST(A_{k})$ of $\Sp_{\ell-1}/\Q$, called the algebraic Sato-Tate group of $A_{k}$, such that $$
G_{\ell'}^{1,Zar}=\AST(A_{k})\otimes_{\Q}\Q_{\ell'}
$$ for every prime $\ell'$ (see \cite[\S 2.2]{FKRS12}). We now recall the definition of the \emph{twisted Lefschetz group}, which we will denote by $\TL(A_{k})$. For $\tau\in G_\Q$, set
\[
\Lef(A_{k})(\tau) := \{\gamma \in \Sp_{\ell-1}\,|\, \gamma \alpha \gamma^ {-1} = \tau(\alpha) \mbox{ for all $\alpha \in \End((A_{k})_{\Qbar})\otimes\Q$}\}\,,
\]
where $\alpha$ is seen as an endomorphism of $H_1((A_{k})_\C,\Q)$. Then one defines 
$$
\TL(A_{k}) := \bigcup_{\tau \in G_\Q} \Lef(A_{k})(\tau)
$$

\begin{lemma}\label{Lemma: ASTC}  Suppose that $(\ell,k)$ is a non-degenerate pair. The algebraic Sato-Tate Conjecture holds for $A_{k}$ with $\AST(A_{k})=\TL(A_{k})$. 
\end{lemma}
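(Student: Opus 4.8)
The strategy is the standard two-step reduction for proving the algebraic Sato-Tate Conjecture for an abelian variety with CM, but made explicit using the combinatorics of $M_k/W_k$ developed in the preliminary sections. The first step is to recall, following \cite{BK11} (and the general framework of \cite{FKRS12}), that for \emph{any} abelian variety $A$ over a number field one has the chain of inclusions $G_{\ell'}^{1,Zar}\subseteq \AST(A)\subseteq \TL(A)$ once $\AST(A)$ is known to exist; and that $\AST(A)$ exists and equals $\TL(A)$ as soon as one can show that $\TL(A)$ has the ``right'' dimension, namely that its identity component $\TL(A)^\circ$ coincides with the connected algebraic Sato-Tate group, which in the CM case is the image of the Hodge group inside $\Sp_{\ell-1}$. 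So the proof reduces to two assertions: (a) identifying $\TL(A_k)^\circ$ with (the symplectic avatar of) $\Hg(A_k)$, and (b) checking that $\TL(A_k)$ is not bigger than $\AST(A_k)$, i.e. every component of $\TL(A_k)$ is actually hit by the Galois image.

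\textbf{Step (a): the connected part.} Here is where non-degeneracy enters. Since $\End((A_k)_{\Qbar})\otimes\Q$ contains the CM field $F_k=F^{W_k}$ acting on $H_1$ compatibly with the decomposition coming from $M_k/W_k$, the centralizer condition defining $\Lef(A_k)(\id)$ forces $\Lef(A_k)(\id)$ to be the torus $T_{F_k}$ cut out inside $\Sp_{\ell-1}$ by the CM action — a product of copies of $\operatorname{GL}_1$, one for each orbit, intersected with the symplectic condition so that conjugate characters are inverse to each other. I would then invoke Lemma~\ref{lemma: image phik}: the rank of $\TL(A_k)^\circ=\Lef(A_k)(\id)$ as computed from $\Phi_k^*$ is $\rk(D_k)+1$, and under the hypothesis that $(\ell,k)$ is non-degenerate this equals $r_k+1$, which is exactly the dimension of $\Hg(A_k)$ (so that, modulo the central torus governing the weight, one recovers precisely the connected Sato-Tate group $\USp$-side, a product of $r_k$ copies of $\Unitary(1)$). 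Thus $\TL(A_k)^\circ$ has the expected dimension $r_k$, which is enough to conclude $\AST(A_k)=\TL(A_k)$ as algebraic groups, by the criterion of \cite{BK11}/\cite{FKRS12}.

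\textbf{Step (b): the components.} It remains to see that $\TL(A_k)$ is not strictly larger than $G_{\ell'}^{1,Zar}$ at the level of component groups. Both have component group a quotient of $\Gal(\Qbar/\Q)$ acting through its effect on $\End((A_k)_{\Qbar})\otimes\Q$; since all the endomorphisms are defined over $F=\Q(\zeta_\ell)$ and $F/\Q$ is abelian with group $G$, each coset of $\TL(A_k)^\circ$ in $\TL(A_k)$ is $\Lef(A_k)(\sigma_t)$ for some $t\in G$, and $\Lef(A_k)(\sigma_t)$ is nonempty for every $t$ because the Hecke characters $J_{(ka,a)}$ realize the Galois twisting explicitly (equation~(\ref{equation: comm})): the second identity there exhibits a Galois-equivariant structure that produces, for each $t$, an actual symplectic matrix intertwining the action by $\sigma_t$. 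Hence $\pi_0(\TL(A_k))\cong G$, and the same group of components is realized inside $G_{\ell'}^{1,Zar}$ by the image of $\Frob_\p$ for $\p$ of appropriate residue degrees. So $G_{\ell'}^{1,Zar}=\TL(A_k)$ and the algebraic Sato-Tate Conjecture holds with $\AST(A_k)=\TL(A_k)$.

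\textbf{Main obstacle.} The routine parts are the symplectic bookkeeping in identifying $\Lef(A_k)(\id)$ with the CM torus and counting components. The genuinely delicate point — and the only place the non-degeneracy hypothesis is used — is matching the \emph{dimension}: one must be sure that the abstract rank computation for $\Phi_k^*$ in Lemma~\ref{lemma: image phik}, which yields $\rk(D_k)+1$, really does coincide with $\dim\TL(A_k)^\circ$ and with $\dim\Hg(A_k)$, so that the inclusions $G_{\ell'}^{1,Zar}\subseteq\AST(A_k)\subseteq\TL(A_k)$ collapse to equalities. If $D_k$ were singular this fails and $\TL(A_k)$ would be too big; the hypothesis $\det(D_k)\neq 0$ is exactly what closes the gap.
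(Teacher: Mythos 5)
Your overall framework (reduce, via the criterion of \cite{BK11}/\cite{FKRS12}, to matching the connected part of $\TL(A_k)$ with the Hodge group and then accounting for the component group) is the same as the paper's, but your Step (a) has a genuine gap. The rank computed by $\Phi_k^*$ in Lemma \ref{lemma: image phik} is \emph{not} the dimension of $\TL(A_k)^0=\Lef(A_k)$: the Lefschetz group is cut out by the centralizer-plus-symplectic conditions alone, so it is the full $r_k$-dimensional CM torus for \emph{every} pair $(\ell,k)$, degenerate or not --- this is exactly your own first computation of $\Lef(A_k)(\id)$, and it is how the paper computes it (via Lemma \ref{lemma: alpha}). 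What $\rk(\Phi_k^*)=\rk(D_k)+1$ measures is the dimension of the Mumford--Tate group, so that $\dim\Hg(A_k)=\rk(D_k)$, which equals $r_k$ (not $r_k+1$; your count is off by the central weight torus) in the non-degenerate case. Hence invoking Lemma \ref{lemma: image phik} to compute $\dim\TL(A_k)^\circ$ is either circular or false, and the substantive input your argument needs but never supplies is the bridge between the combinatorics and Hodge theory: that the Mumford--Tate group of the CM abelian variety $A_k$ is the reflex-norm torus whose character group is $\Phi_k^*(\Z[G/W_k])$ (Kubota/Ribet, cf.\ \cite{Yu13}), together with the Mumford--Tate conjecture for CM abelian varieties, which you use implicitly when you identify the connected algebraic Sato--Tate group with the image of $\Hg(A_k)$. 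You flag precisely this identification as the ``main obstacle,'' but you do not close it.

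For comparison, the paper proves both required facts at once: it writes the chain $G_{\ell'}^{1,Zar,0}\subseteq\Hg(A_k)\otimes_\Q\Q_{\ell'}\subseteq \Lef(A_k)\otimes_\Q\Q_{\ell'}$, reduces by base change to the simple factor $B_k$ over $F$, quotes \cite{BGK03} --- applicable exactly because $B_k$ is simple with non-degenerate CM type --- to see that the $\ell'$-adic monodromy group is already the full $r_k$-dimensional diagonal torus, and checks via Lemma \ref{lemma: alpha} that $\Lef(B_k)$ is no bigger; then \cite[Thm.~2.16(a)]{FKRS12} yields $\AST(A_k)=\TL(A_k)$, and that citation also handles the component group, so your hand-made Step (b) is not needed (the nonemptiness of each $\Lef(A_k)(\sigma_t)$ is made explicit later anyway by the matrix $\gamma$ of Proposition \ref{proposition: ST group}). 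Your route can be repaired by replacing the appeal to Lemma \ref{lemma: image phik} with the CM-theoretic statement $\dim\Hg(A_k)=\rk(D_k)$ plus the known Mumford--Tate conjecture in the CM case; but as written, the dimension-matching step --- the only place where $\det(D_k)\neq 0$ enters --- is asserted rather than proved.
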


\begin{proof}
We will apply \cite[Thm. 2.16.(a)]{FKRS12}. It is thus enough to show that $\Hg(A_{k})=\Lef(A_{k})$, and that the Mumford-Tate Conjecture\footnote{The Mumford-Tate Conjecture for abelian varieties with CM is known in general, as it follows from the work of Shimura and Taniyama (see Ribet's review on \cite{Ser68}). The recent expository article \cite{Yu13} gives a detailed proof.} holds for $A_{k}$. 
Here, $\Hg(A_k)$ denotes the Hodge group of~$A_k$ and $\Lef(A_k)$ is the \emph{Lefschetz group}, which is $(\Lef(A_{k})(\id))^0$ by definition.   
Deligne \cite[I,Prop. 6.2]{Del82} showed that $G_{\ell'}^{1,Zar,0}\subseteq\Hg(A_{k})\otimes_\Q\Q_{\ell'}$ for every prime $\ell'$. Moreover one has the inclusion $\Hg(A_{k})\subseteq \Lef(A_{k})$ (see \cite[Rem. 4.3]{BK11}). We will prove the two required facts simultaneously by showing that the inclusions
\begin{equation}\label{equation: MT inclusions}
G_{\ell'}^{1,Zar,0}\subseteq\Hg(A_{k})\otimes_\Q\Q_{\ell'}\subseteq \Lef(A_{k})\otimes_\Q\Q_{\ell'}
\end{equation}
are in fact equalities. It is well known that if the first inclusion is an equality for one prime, then it is so for every prime (see \cite{Tan96}). The same is obviously true for the second inclusion.
Since the objects of the chain of inclusions (\ref{equation: MT inclusions}) do no depend on base change by finite extensions, we may replace $A_{k}$ by its base change $(A_{k})_F$ to $F$. But then we may write (\ref{equation: MT inclusions}) as
$$
\iota_{n_k}(G_{\ell'}^{1,Zar,0}(B_{k}))\subseteq\iota_{n_k}(\Hg(B_{k})\otimes_\Q\Q_{\ell'})\subseteq \iota_{n_k}(\Lef(B_{k})\otimes_\Q\Q_{\ell'})\,,
$$
and it thus suffices to show that the inclusions
$$
G_{\ell'}^{1,Zar,0}(B_k)\subseteq\Hg(B_{k})\otimes_\Q\Q_{\ell'}\subseteq \Lef(B_{k})\otimes_\Q\Q_{\ell'}
$$
are in fact equalities for some prime $\ell'$. 
Since $B_{k}$ is simple and has non-degenerate CM type, the results of \cite{BGK03} yield
$$
G_{\ell'}^{Zar,0}(B_k)=\{\diag(x_1,y_1,\dots,x_{r_k},y_{r_k})\,|\, x_i,y_i\in\Q_{\ell'}^*,\,x_1y_1=\dots = x_{r_k}y_{r_k} \}
$$
for every prime $\ell'$ of good reduction for $A_{k}$ that splits completely in $F$.
This implies
\begin{equation}\label{equation: exp}
G_{\ell'}^{1,Zar,0}(B_k)=\{\diag(x_1,y_1,\dots,x_{r_k},y_{r_k})\,|\, x_i,y_i\in\Q_{\ell'}^*,\,x_1y_1=\dots = x_{r_k}y_{r_k}=1 \}\,.
\end{equation}
To compute  $\Lef(B_{k})\otimes_\Q\Q_{\ell'}$ observe that any matrix commuting with any $\beta\in\End(H_1((B_{k})_\C,\Q))$ (as in Lemma \ref{lemma: alpha} below, for example) must be diagonal. Imposing that it preserves $J$, we deduce that
$$
\Lef(B_{k})\otimes_\Q\Q_{\ell'}=\left\{\diag\left(x_1,\frac{1}{x_1},\dots,x_{r_k},\frac{1}{x_{r_k}}\right)\,|\, x_i\in\Q_{\ell'}^* \right\}\,,
$$
 yielding the desired equality. 
\end{proof}

\begin{lemma}\label{lemma: alpha}
Let $\alpha:\Cc_k\rightarrow \Cc_k$ be the automorphism  defined by $\alpha(u,v)=(u,\zeta_\ell v)$. It induces an endomorphism of $H_1((A_{k})_\C,\Q)$ that we also denote by $\alpha$. Then $\beta:=\sum_{w\in W_k}{}^w\alpha$ is an endomorphism of $H_1((B_{k})_\C,\Q)$ and there exist symplectic basis of $H_1((A_{k})_\C,\C)$ and $H_1((B_{k})_\C,\C)$ (with respect to $J$) for which
$$
\alpha=\diag (\{ \Theta_i\}_{i\in \{1,\dots,\frac{\ell-1}{2}\}} )\,,
\qquad
\beta=\diag \big(\big\{ \sum_{w\in W_k}{}^w\Theta_i\big\}_{i\in \{1,\dots,\frac{\ell-1}{2n_k}\}} \big)\,,
$$
where, if $g$ is a generator of $G$, then
$$
\Theta_i:=
\begin{cases}
\{\zeta_\ell^{g^i}, \overline \zeta_\ell^{g^i}\} &\text{if $g^i\in M_k$} \,,\\
\{  \overline\zeta_\ell^{g^i}, \zeta_\ell^{g^i} \} &\text{if $g^i\not \in M_k$\,.}
\end{cases}
$$
\end{lemma}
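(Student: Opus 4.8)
The plan is to make everything explicit on singular homology and then diagonalize. The automorphism $\alpha(u,v)=(u,\zeta_\ell v)$ generates, together with the group $\mathcal A_k$ descended to $\Cc_k$, the image of the cyclotomic Galois action, and the decomposition (\ref{equation: del dec}) of $V_{\ell'}(\Cc_k)$ is the $\ell'$-adic shadow of an analogous eigenspace decomposition on $H_1((A_k)_\C,\Q)\otimes\C$. Concretely, first I would observe that $\alpha$ acts on the one-dimensional pieces $V_{(ka,a)}$ (for $a\in G$) by a root of unity, and since $H_1((A_k)_\C,\Q)$ is a free $\Q(\zeta_\ell)$-module via this action, it splits as $\bigoplus_{i}$ of two-dimensional $\Q$-pieces indexed by the pairs $\{g^i\}$; on the complexification each such piece further splits into the eigenline for $\zeta_\ell^{g^i}$ and the eigenline for $\overline\zeta_\ell^{g^i}=\zeta_\ell^{-g^i}$, the two being complex-conjugate and hence paired by the (real) symplectic form coming from the polarization. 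This produces the block-diagonal shape $\alpha=\diag(\{\Theta_i\})$. The labelling of which eigenvalue comes first in the pair $\Theta_i$ is dictated by the Hodge structure: the holomorphic part $H^{0,1}$ (equivalently, via Remark \ref{remark: basis differentials}, the span of the $\omega_j$ with $j\in M_k$) singles out, for each $i$, whether $g^i\in M_k$ or not, and one orders the symplectic pair so that the ``holomorphic'' eigenvalue is in the prescribed slot; choosing the symplectic basis compatibly (rescaling within each conjugate pair so that $J$ takes its standard form $\diag(J_2,\dots,J_2)$) gives exactly the stated formula for $\Theta_i$.

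Next, for $\beta=\sum_{w\in W_k}{}^w\alpha$: here ${}^w$ denotes the action of $\sigma_w\in\Gal(F/\Q)$ on $\End(H_1((A_k)_\C,\Q))$ induced by functoriality of the $\mathcal A_k$-action (equivalently, conjugation by the automorphism of $\Cc$/$\Cc_k$ realizing $\sigma_w$, as in the displayed identities (\ref{equation: comm})). Because $W_k$ is by definition the stabilizer of $M_k$ in $G$, the operator $\beta$ is $W_k$-invariant, hence descends to an endomorphism of the isotypic factor corresponding to $B_k$; concretely $H_1((B_k)_\C,\Q)$ is realized inside $H_1((A_k)_\C,\Q)$ as one of the $n_k$ isomorphic $B_k$-blocks, and on it $\sum_{w\in W_k}{}^w\alpha$ acts diagonally with the $i$-th symplectic pair carrying $\sum_{w\in W_k}{}^w\Theta_i$ — where ${}^w$ permutes the index $i$ within the orbit $g^iW_k$, so the sum is genuinely over a $W_k$-orbit and is well-defined on $M_k/W_k$, giving $\frac{\ell-1}{2n_k}$ blocks. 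The symplectic basis of $H_1((B_k)_\C,\C)$ is just the restriction of the one chosen on $H_1((A_k)_\C,\C)$ to this block, which is symplectic for the restricted form (again a standard $\diag(J_2,\dots,J_2)$ after the same normalization).

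The main obstacle is bookkeeping rather than depth: one must pin down precisely the comparison between (i) the $\ell'$-adic eigenspace decomposition (\ref{equation: del dec}), (ii) the Betti realization $H_1((A_k)_\C,\Q)$ with its Hodge structure and its $F$-module structure, and (iii) the symplectic form, and then verify that the sign/order convention recorded in the definition of $\Theta_i$ (namely $g^i\in M_k$ versus $g^i\notin M_k$) is the one forced by positivity of the Riemann form on $H^{0,1}$. I would handle this by fixing once and for all a symplectic $\Q$-basis adapted to the $F$-action — e.g. transporting the basis $\{\omega_j,\overline\omega_j\}_{j\in M_k}$ of $H^1$ through Poincar\'e duality — and checking the two displayed matrix identities coordinate by coordinate; the descent of $\beta$ to $H_1((B_k)_\C,\Q)$ is then automatic from $W_k$-invariance and Theorem \ref{theorem: KR}(i). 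No genuinely new input beyond what is already recalled (the decomposition (\ref{equation: del dec}), Remark \ref{remark: basis differentials}, and Lemma \ref{lemma: card W} for the shape of $W_k$) is needed.
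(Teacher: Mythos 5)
Your proposal is correct and follows essentially the same route as the paper: the paper's proof simply fixes the basis $\{\omega_j\}_{j\in M_k}$ of $\pi_k^*\big(\Omega^1_{\Cc_k/\Q}\big)$ from Remark \ref{remark: basis differentials}, notes that $\alpha^*\omega_j=\zeta_\ell^{j}\omega_j$, and takes the corresponding symplectic bases of $H_1((A_{k})_\C,\C)$ and $H_1((B_{k})_\C,\C)$ (the latter using representatives $j\in M_k/W_k$), which is exactly your eigenline-pairing argument stripped of the inessential detour through the $\ell'$-adic decomposition. The one ingredient you defer to a ``coordinate by coordinate'' check --- that the eigenvalue of $\alpha$ on $\omega_j$ is $\zeta_\ell^{j}$ (obtained by lifting $\alpha$ to $\Cc$), which is what pins down the ordering inside each $\Theta_i$ --- is precisely the single explicit computation the paper records, so your plan closes without difficulty.
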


\begin{proof} According to Remark \ref{remark: basis differentials}, fix the basis $B:=\left\{\omega_j:= \frac{x^{\langle j\rangle-1}}{y^{\langle (k+1)j\rangle}}dx\,|\, j\in M_k\right\}$ of $\pi_k^*\Big(\Omega^1_{\Cc_k/\Q}\Big)$.
Since $\alpha^*\left(\omega_j \right)=\zeta_\ell^{j}\omega_j$, we have that the matrix of $\alpha^*$ in the basis $B$ is 
$\diag(\{\zeta_\ell^{j}\,|\, j\in M_k\})$. The Lemma follows from taking the symplectic basis of $H_1((A_{k})_\C,\C)$ corresponding to $B$ and the symplectic basis of  $H_1((B_{k})_\C,\C)$ corresponding to $\left\{ \omega_j \,|\, j\in M_k/W_k\right\}$.
\end{proof}

\begin{corollary}\label{corollary: group comp} The group of components of $G^{1,Zar}_{\ell'}$ and $\AST(A_{k})$ are isomorphic to $\Gal(F/\Q)$. The connected component $\AST(A_{k})^0$ is isomorphic to  $\AST((A_{k})_F)$.
\end{corollary}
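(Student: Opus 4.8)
The plan is to deduce the statement about $\AST(A_k)=\TL(A_k)$ (which we may use thanks to Lemma \ref{Lemma: ASTC}) from the structure of the twisted Lefschetz group, and to transfer the information about $G_{\ell'}^{1,Zar}$ via the algebraic Sato-Tate Conjecture, which gives $G_{\ell'}^{1,Zar}=\AST(A_k)\otimes_\Q\Q_{\ell'}$ for every prime $\ell'$. Since the group of components and the connected component are preserved under the faithfully flat base change $-\otimes_\Q\Q_{\ell'}$, it suffices to work with $\AST(A_k)=\TL(A_k)=\bigcup_{\tau\in G_\Q}\Lef(A_k)(\tau)$. First I would recall from Lemma \ref{Lemma: ASTC} that $\AST(A_k)^0=\Hg(A_k)$, so $\AST((A_k)_F)$ has the same connected component; thus the second assertion amounts to showing that the natural map $\Hg(A_k)\to\AST((A_k)_F)$ of connected components is an equality, which follows because $(A_k)_F$ has all its endomorphisms defined over $F$, hence $\Lef(A_k)(\tau)=\Lef(A_k)(\id)$ is connected for every $\tau\in G_F$ and $\AST((A_k)_F)=\Lef(A_k)(\id)^0\cdot(\text{components})$ reduces to its identity component.

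The crux is the description of the group of components of $\TL(A_k)$. I would argue as follows. The map $\tau\mapsto \Lef(A_k)(\tau)$ factors through $\Gal(F/\Q)$: indeed $\End((A_k)_{\Qbar})\otimes\Q=\End((A_k)_F)\otimes\Q$ because $B_k$ has CM by $F_k\subseteq F$ and the isogeny $A_k\sim_F B_k^{n_k}$ together with the $G$-action on the differentials realizes the full CM algebra $M_{n_k}(F)$ (one can invoke \eqref{equation: comm} or the explicit action $\alpha^*\omega_j=\zeta_\ell^j\omega_j$ of Lemma \ref{lemma: alpha}, which shows that the subalgebra generated by $\alpha$ and the $n_k$-dimensional idempotents is already defined over $F$ and is not fixed by any nontrivial element of $G_F$ acting through a larger field). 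Consequently $\Lef(A_k)(\tau)$ depends only on the image of $\tau$ in $\Gal(F/\Q)\cong G$, each such coset is nonempty (every automorphism of the CM algebra commuting appropriately is realized, since $F/\Q$ is abelian and the CM algebra is a product of copies of $F$ permuted by $G$), and the cosets $\Lef(A_k)(\tau)$ for distinct $\tau\in\Gal(F/\Q)$ are disjoint. This last disjointness is the key point: two elements $\gamma_1\in\Lef(A_k)(\tau_1)$, $\gamma_2\in\Lef(A_k)(\tau_2)$ with the same connected component would force $\tau_1$ and $\tau_2$ to act identically on $\End((A_k)_{\Qbar})\otimes\Q$, hence to coincide in $\Gal(F/\Q)$ by the faithfulness just established. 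Therefore $\pi_0(\TL(A_k))\cong\Gal(F/\Q)$.

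It remains to check that each $\Lef(A_k)(\tau)$ is a single connected component, i.e. that $\Lef(A_k)(\id)=\Lef(A_k)$ is connected; but this is exactly the content of the computation in the proof of Lemma \ref{Lemma: ASTC}, where $\Lef(B_k)\otimes_\Q\Q_{\ell'}$ was shown to be the connected torus $\{\diag(x_1,1/x_1,\dots,x_{r_k},1/x_{r_k})\}$, and hence $\Lef(A_k)=\iota_{n_k}(\Lef(B_k))$ is connected as well, and equals $\Hg(A_k)$ under the non-degeneracy hypothesis. Combining: $\TL(A_k)$ has connected identity component $\Hg(A_k)\cong\AST((A_k)_F)^0$ and $\pi_0(\TL(A_k))\cong\Gal(F/\Q)$, which after $\otimes_\Q\Q_{\ell'}$ gives the statement for $G_{\ell'}^{1,Zar}$ as well. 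I expect the main obstacle to be the clean verification that the Galois action on $\End((A_k)_{\Qbar})\otimes\Q$ is faithful through $\Gal(F/\Q)$ and that every coset is realized — this is where the explicit structure of the Jacobi-sum Hecke characters in \eqref{equation: comm} and the decomposition \eqref{equation: del dec} must be used to pin down the endomorphism algebra and its Galois twists precisely; everything else is formal manipulation of algebraic groups and base change.
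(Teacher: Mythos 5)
Your route is the same one the paper takes: the paper proves this corollary by combining Lemma \ref{Lemma: ASTC} with the general result \cite[Prop.~2.17]{FKRS12}, and what you do is unfold that citation by hand --- reduce to $\TL(A_k)$, identify the identity component with the centralizer torus computed in the proof of Lemma \ref{Lemma: ASTC}, and identify $\pi_0$ with $\Gal(F/\Q)$ through the Galois action on $\End((A_k)_{\Qbar})\otimes\Q$. Most of the steps are sound: $\pi_0$ and identity components are preserved by $\otimes_\Q\Q_{\ell'}$; all endomorphisms are defined over $F$ (from $\Jac(\Cc_k)\sim_F B_k^{n_k}$ with the CM of $B_k$ defined over $F$, Theorem \ref{theorem: KR}); faithfulness of the $\Gal(F/\Q)$-action follows from ${}^{\sigma_t}\alpha=\alpha^t$; and connectedness of $\Lef(A_k)(\id)$ is exactly the centralizer computation in the proof of Lemma \ref{Lemma: ASTC}, transported via $\iota_{n_k}$.

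There are, however, two problems, one factual and one logical. The factual one: $\End((A_k)_{\Qbar})\otimes\Q$ is $\M_{n_k}(F_k)$ with $F_k=F^{W_k}$, not $\M_{n_k}(F)$, and it is certainly not ``a product of copies of $F$ permuted by $G$'' (when $n_k=3$, $\M_3(F)$ contains \'etale commutative subalgebras of degree $3(\ell-1)>2\dim A_k$, so it cannot act on $A_k$ at all). This slip would be harmless, except that it is the only justification you give for the genuinely non-formal step: that $\Lef(A_k)(\tau)$ is \emph{nonempty} for every $\tau$, i.e.\ that each class of $\Gal(F/\Q)$ is actually realized by a symplectic matrix twisting the endomorphism algebra in the prescribed way. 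Without this, your argument only yields that $\pi_0(\TL(A_k))$ is a \emph{subgroup} of $\Gal(F/\Q)$. The fact is true but needs an argument: either the explicit element $\gamma\in\Lef(A_k)(\sigma_g)$ constructed in the proof of Proposition \ref{proposition: ST group} (its powers $\gamma^i\in\Lef(A_k)(\sigma_{g^i})$ meet every coset), or the normalization trick underlying \cite[Prop.~2.17]{FKRS12}: $\varrho(\tau)$ conjugates the endomorphism algebra by ${}^{\tau}(\cdot)$ but has symplectic multiplier $\chi_{\ell'}(\tau)$, and one corrects it by an element of the connected centralizer (on which the multiplier is surjective and whose conjugation action on $\End$ is trivial) to land in $\Lef(A_k)(\tau)\otimes\Q_{\ell'}$. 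With one of these inserted, your proof is complete; as written, the coset-realization step rests on an incorrect description of the endomorphism algebra.
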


\begin{proof} This follows from Lemma \ref{Lemma: ASTC} (see  \cite[Prop. 2.17]{FKRS12}).
\end{proof}

\subsection{The Sato-Tate group of $\Jac(\Cc_k)$}

\begin{proposition}\label{proposition: ST F} If $(\ell,k)$ is a non-degenerate pair, then
\small
$$
\ST(\Jac(\Cc_k)_F)=\{\iota_{n_k}(\diag(u_1,\overline u_1,\dots,u_{r_k},\overline u_{r_k}))\,|\,u_1,\dots, u_{r_k}\in\Unitary(1)\}\subseteq \USp(\ell-1)\,.
$$
\normalsize
\end{proposition}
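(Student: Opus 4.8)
The plan is to identify the Sato-Tate group of $\Jac(\Cc_k)_F$ directly from the algebraic Sato-Tate group computed (over $F$) in the proof of Lemma \ref{Lemma: ASTC}, and then pass to a maximal compact subgroup. Recall that, since $(\ell,k)$ is non-degenerate, the chain of inclusions in (\ref{equation: MT inclusions}) consists of equalities, and in the proof of Lemma \ref{Lemma: ASTC} we already determined that over $F$ the identity component satisfies
$$
G_{\ell'}^{1,Zar,0}((A_k)_F)=\iota_{n_k}\!\left(\{\diag(x_1,1/x_1,\dots,x_{r_k},1/x_{r_k})\,|\,x_i\in\Q_{\ell'}^*\}\right)\,.
$$
By Lemma \ref{lemma: alpha}, the endomorphism $\beta=\sum_{w\in W_k}{}^w\alpha$ of $H_1((B_k)_\C,\Q)$ has distinct eigenvalues on the $r_k$ blocks (this is where non-degeneracy is used: it guarantees that the CM type $M_k/W_k$ is not invariant under any extra symmetry, so the sums $\sum_{w\in W_k}{}^w\Theta_i$ are pairwise distinct as multisets), so the full centralizer of $\End((A_k)_{\overline\Q})\otimes\Q$ inside $\Sp_{\ell-1}$ is exactly the diagonal torus $\iota_{n_k}(\Lef(B_k))$. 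Since $\Jac(\Cc_k)_F\sim_F B_k^{n_k}$ has abelian, hence connected, Mumford-Tate group of rank $r_k$, we conclude that $\AST((A_k)_F)=\Hg((A_k)_F)$ is connected, equal to $\iota_{n_k}(\Lef(B_k))=G_{\ell'}^{1,Zar,0}((A_k)_F)$, a torus of rank $r_k$.

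Next I would invoke the standard fact (cf. \cite[\S3]{FKRS12}) that the Sato-Tate group of an abelian variety over a number field is a maximal compact subgroup of the complex points of its algebraic Sato-Tate group; equivalently, one takes $\ST(\Jac(\Cc_k)_F)$ to be a maximal compact subgroup of $\AST((A_k)_F)(\C)$. The group $\AST((A_k)_F)(\C)$ is the complex torus $\iota_{n_k}(\{\diag(x_1,1/x_1,\dots,x_{r_k},1/x_{r_k})\,|\,x_i\in\C^*\})$, whose maximal compact subgroup is obtained by restricting each $x_i$ to lie on the unit circle $\Unitary(1)$ and writing $1/x_i=\overline{x}_i$. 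This yields precisely
$$
\{\iota_{n_k}(\diag(u_1,\overline u_1,\dots,u_{r_k},\overline u_{r_k}))\,|\,u_1,\dots,u_{r_k}\in\Unitary(1)\}\,,
$$
which sits inside $\USp(\ell-1)$ because it preserves both the symplectic form $J$ and the Hermitian form (each $2\times 2$ block $\diag(u_i,\overline u_i)$ lies in $\USp(2)\cong\Unitary(1)$).

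The main obstacle, and the one point deserving genuine care, is the claim that $\beta$ has $r_k$ distinct eigenvalue-multisets $\sum_{w\in W_k}{}^w\Theta_i$ — without this, the centralizer could be larger than the diagonal torus and the maximal compact subgroup would acquire extra permutation factors, contradicting the simplicity of $B_k$. This is exactly controlled by the non-degeneracy hypothesis: degeneracy of $M_k/W_k$ is what would force coincidences among these conjugates (it is equivalent to $\det(D_k)=0$, i.e.\ to a linear relation among the rows indexed by $M_k/W_k$), so under our standing assumption the blocks are genuinely distinct and the argument goes through. Everything else is a routine translation between the Tate-module computation of \cite{BGK03} already invoked in Lemma \ref{Lemma: ASTC}, the passage to the symplectic subgroup, and the identification of the maximal compact subgroup of a split torus.
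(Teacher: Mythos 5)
Your argument follows essentially the same route as the paper's proof: by definition $\ST(\Jac(\Cc_k)_F)$ is a maximal compact subgroup of $\AST((A_k)_F)\otimes_\Q\C$, the identity component is the diagonal torus of (\ref{equation: exp}) supplied by \cite{BGK03}, and the maximal compact subgroup of that split torus is the stated product of copies of $\Unitary(1)$ inside $\USp(\ell-1)$. One point needs correcting, however: what you single out as ``the main obstacle'' --- that non-degeneracy is what guarantees the multisets $\sum_{w\in W_k}{}^w\Theta_i$ are pairwise distinct, and that degeneracy of $M_k/W_k$ would force coincidences among them --- is a misattribution. Those sums are the Galois conjugates of the Gaussian period $\sum_{w\in W_k}\zeta_\ell^{w}$, which generates $F_k=F^{W_k}$, so they are pairwise distinct for \emph{every} pair $(\ell,k)$ (indeed, a degenerate pair has $W_k$ trivial and the $\Theta_i$ are obviously distinct); distinctness is not equivalent to $\det(D_k)\neq 0$. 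Consequently the centralizer of $\End((A_k)_{\overline\Q})\otimes\Q$ in $\Sp_{\ell-1}$, i.e.\ the Lefschetz torus, is the diagonal torus regardless of degeneracy; what degeneracy changes is the Hodge group/$\ell'$-adic image, making the inclusion $G_{\ell'}^{1,Zar,0}\subseteq\Lef(A_k)\otimes\Q_{\ell'}$ strict, so that the Sato-Tate group over $F$ would be a \emph{proper subtorus} of the product of $\Unitary(1)$'s --- it would never acquire ``extra permutation factors.'' Since you do import the equalities in (\ref{equation: MT inclusions}) from Lemma \ref{Lemma: ASTC}, which is where non-degeneracy genuinely enters via \cite{BGK03}, your proof still goes through; just remove or repair that parenthetical justification.
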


\begin{proof}
Let $\ell'$ be a prime and take an embedding $\Q_{\ell'}\hookrightarrow \C$. By definition, $\ST(\Jac(\Cc_k)_F)$ is a maximal compact subgroup of  
$$
\AST((A_{k})_F)\otimes_\Q\C\simeq \AST(A_{k})^0\otimes_\Q\C\simeq G_{\ell'}^{1,Zar,0}\otimes_{\Q_{\ell'}}\C\,.
$$
But it follows immediately from (\ref{equation: exp}) that one can take
$$
\{\iota_{n_k}(\diag(u_1,\overline u_1,\dots,u_{r_k},\overline u_{r_k}))\,|\,u_1,\dots, u_{r_k}\in\Unitary(1)\}
$$
as such a maximal compact subgroup. 
\end{proof}

\begin{proposition}\label{proposition: ST group}
Suppose that $(\ell,k)$ is a non-degenerate pair. Then 
$$
\ST(\Jac(\Cc_k))\simeq \Unitary(1)\times\stackrel{r_k}\ldots\times\Unitary(1)\rtimes (\Z/\ell\Z)^*\,.
$$ 
More precisely, if $g$ is a generator of $G$, let 
$$
\gamma=
\begin{pmatrix}
0 & \Gamma_2 & 0 & \dots & 0 & 0\\
0 & 0 & \Gamma_3 & \dots & 0 & 0\\
\vdots & \vdots & \vdots & \ddots & \vdots & \vdots \\
0 & 0 & 0 & \dots & \Gamma_{\frac{\ell-3}{2}} & 0\\
0 & 0 & 0 & \dots & 0 &  \Gamma_{\frac{\ell-1}{2}}\\
\Gamma_1 & 0 & 0 & \dots & 0 & 0
\end{pmatrix}\,,
\qquad
\text{where}\quad
\Gamma_i=
\begin{cases}
I_2\text{ if $g^{i-1},g^{i}\in M_k$, or  $g^{i-1},g^{i}\not\in M_k$}\\
J_2\text{ if $g^{i-1},-g^{i}\in M_k$ or $-g^{i-1},g^{i}\in M_k$.}
\end{cases}
$$
Then, as a subgroup of $\USp(\ell-1)$, the group $\ST(\Jac(\Cc_k))$ is conjugate to the group generated by
$\ST(\Jac((\Cc_k)_F))$ and  $\gamma$.
\end{proposition}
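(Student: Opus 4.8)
The plan is to combine the three structural facts already available: (a) by Corollary~\ref{corollary: group comp}, the group of components of $\AST(A_k)$ — and hence of $\ST(\Jac(\Cc_k))$ — is isomorphic to $\Gal(F/\Q)\cong(\Z/\ell\Z)^*$; (b) by Proposition~\ref{proposition: ST F}, the identity component is the group $\ST(\Jac((\Cc_k)_F))=\{\iota_{n_k}(\diag(u_1,\overline u_1,\dots,u_{r_k},\overline u_{r_k}))\}\cong\Unitary(1)^{r_k}$; and (c) the abstract extension of a finite cyclic group by a torus splits, so $\ST(\Jac(\Cc_k))\simeq\Unitary(1)^{r_k}\rtimes(\Z/\ell\Z)^*$ as an abstract group. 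The content of the proposition is therefore the \emph{explicit} realization inside $\USp(\ell-1)$: one must exhibit a single matrix $\gamma$ that, together with $\ST(\Jac((\Cc_k)_F))$, generates a subgroup of $\USp(\ell-1)$ conjugate to $\ST(\Jac(\Cc_k))$.

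The key steps I would carry out, in order, are as follows. First, recall that $\ST(\Jac(\Cc_k))$ is a maximal compact subgroup of $G_{\ell'}^{1,Zar}\otimes_{\Q_{\ell'}}\C$ (after fixing $\Q_{\ell'}\hookrightarrow\C$), and that its component group is the image of $\Gal(F/\Q)$ acting on $V_{\ell'}(\Cc_k)$ through the decomposition (\ref{equation: del dec}); so it suffices to determine the matrix by which the Frobenius $\sigma_g$ for a generator $g$ of $G$ acts on a suitable symplectic basis, and then take a compact representative. Second, using Lemma~\ref{lemma: alpha} I would fix the symplectic basis of $H_1((A_k)_\C,\C)$ adapted to $\{\omega_j\mid j\in M_k\}$, indexed so that the $i$-th symplectic pair corresponds to $g^i$; in this basis the connected component $\ST(\Jac((\Cc_k)_F))$ acts by the block-diagonal torus $\iota_{n_k}(\diag(u_1,\overline u_1,\dots))$ of Proposition~\ref{proposition: ST F}. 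Third — the computational heart — I would compute how $\sigma_g$ permutes these basis vectors: the second identity in (\ref{equation: comm}), ${}^{\sigma_t}(J_{(ka,a)}(\p))=J_{(tka,tk)}(\p)$, shows that $\sigma_g$ sends the Hecke character indexed by $a$ to the one indexed by $\langle ga\rangle$, i.e.\ it cyclically shifts the eigenline indexed by $g^{i-1}$ to the one indexed by $g^i$; the only subtlety is the bookkeeping of which component of each symplectic pair ($\zeta_\ell^{g^i}$ vs.\ $\overline\zeta_\ell^{g^i}$) gets hit, which is exactly the membership condition $g^{i}\in M_k$ or $-g^{i}\in M_k$ recorded in the definition of $\Gamma_i$, together with sign/orientation corrections needed to land in $\Sp$ (the $J_2$ versus $I_2$ dichotomy). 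Fourth, I would check $\gamma\in\USp(\ell-1)$ — i.e.\ $\gamma$ is unitary (clear, being a signed permutation of an orthonormal basis) and $\gamma^{\mathsf t}J\gamma=J$ — and that conjugation by $\gamma$ normalizes the torus $\ST(\Jac((\Cc_k)_F))$, inducing on $\{u_1,\dots,u_{r_k}\}$ exactly the cyclic shift of order $r_k$ coming from the $\cM$-orbit combinatorics; since $g$ has order $\ell-1$ in $G$ and $\langle\gamma\rangle$ maps onto the component group, the group generated by the torus and $\gamma$ has the right identity component and the right (full, order $\ell-1$) component group, hence equals $\ST(\Jac(\Cc_k))$ up to conjugacy by the general theory.

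The main obstacle I expect is Step three: getting the block structure of $\gamma$ exactly right, in particular the precise placement of $J_2$'s versus $I_2$'s and the fact that $\gamma$ as written (with $\Gamma_1$ in the bottom-left corner) has the correct order — namely order $\ell-1$ modulo the torus, not merely order dividing $\ell-1$ — which is what guarantees that the component group is all of $(\Z/\ell\Z)^*$ and not a proper quotient. This requires carefully tracking the cumulative product $\Gamma_1\Gamma_2\cdots\Gamma_{(\ell-1)/2}$ (equivalently, the parity of the number of sign changes encountered going once around the cycle $g^0,g^1,\dots,g^{\ell-1}$), and relating it to the fact that exactly one of $j,-j$ lies in $M_k$ for each $j$, as already noted in \S\ref{section: Hecke characters}; this bookkeeping is routine but error-prone. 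Everything else — the abstract semidirect product structure, membership in $\USp(\ell-1)$, and the identification of the identity component — follows immediately from the cited results.
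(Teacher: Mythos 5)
Your overall architecture is the same as the paper's: by Proposition~\ref{proposition: ST F} and Corollary~\ref{corollary: group comp} everything reduces to exhibiting the explicit $\gamma$ inside $\ST(\Jac(\Cc_k))$ and showing that its class modulo the torus $\ST(\Jac((\Cc_k)_F))$ has order exactly $\ell-1$. Where you differ is the membership check: you propose to read off the shape of $\gamma$ from how a Frobenius lift permutes the eigenlines $V_{(ka,a)}$ via the second identity in (\ref{equation: comm}), whereas the paper, having already proved $\AST(A_k)=\TL(A_k)$ (Lemma~\ref{Lemma: ASTC}), simply verifies the twisted Lefschetz condition $\gamma\,\alpha\,\gamma^{-1}={}^{\sigma_g}\alpha$ on the explicit matrix of the CM automorphism $\alpha$ from Lemma~\ref{lemma: alpha}, using $J_2\diag(u,\overline u)J_2^{-1}=\diag(\overline u,u)$. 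Your route can be made rigorous, but it needs extra bookkeeping (tying the $\ell'$-adic eigenline indexing to the chosen symplectic basis, verifying the symplectic condition, and, when $n_k=3$, checking that the diagonal discrepancy lands in the $\iota_{n_k}$-patterned torus rather than the full diagonal torus); the Lefschetz check does all of this in one line, which is why the paper takes it.

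Two points in your plan are genuine gaps. First, your claim (c) that an extension of a finite cyclic group by a torus always splits is false as a general principle (the normalizer of a maximal torus in $\mathrm{SU}(2)$ is a non-split extension of $\Z/2\Z$ by $\Unitary(1)$), and it is not how the paper argues: its proof establishes only the ``more precisely'' part, namely that $\ST(\Jac(\Cc_k))$ is conjugate to the group generated by the torus and $\gamma$; indeed, since $\gamma^{\ell-1}=-I_{\ell-1}$ and conjugation by $\gamma^{\frac{\ell-1}{2}}$ inverts the torus, every element of a generating component has $(\ell-1)$-st power $-I_{\ell-1}$, so the displayed $\rtimes$ has to be read through that explicit description rather than deduced from a splitting argument. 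Second, and more importantly, the step you defer as ``routine but error-prone'' is precisely the one place the paper needs a real input. After noting that $\gamma^d$ can only be block diagonal when $\frac{\ell-1}{2}\mid d$, the decisive check is $\gamma^{\frac{\ell-1}{2}}\notin\ST(\Jac((\Cc_k)_F))$: the paper computes $\gamma^{\frac{\ell-1}{2}}=\diag(J_2^{\mathcal N_g},\dots,J_2^{\mathcal N_g})$ with $\mathcal N_g=|\{j\in M_k\,|\,gj\notin M_k\}|$ and invokes the Gauss Lemma~\ref{lemma: Gauss Lemma}, $(-1)^{\mathcal N_g}=\left(\frac{g}{\ell}\right)=-1$, because a generator of $G$ is a quadratic nonresidue; hence the diagonal block is $\pm J_2$, outside the torus, while $\gamma^{\ell-1}=-I_{\ell-1}$ lies inside. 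Your proposed reformulation --- parity of sign changes ``once around the cycle $g^0,\dots,g^{\ell-1}$'' --- is aimed at the wrong power: around the full closed cycle the parity telescopes to even, which only recovers the easy statement $\gamma^{\ell-1}\in\ST(\Jac((\Cc_k)_F))$. What is needed is the half walk from $g^0=1\in M_k$ to $g^{\frac{\ell-1}{2}}=-1\notin M_k$, whose telescoped parity is odd; that corrected telescoping (using only that exactly one of $\pm j$ lies in $M_k$) would in fact give an alternative to the Gauss Lemma, but as written your plan does not carry out the one computation on which the ``order exactly $\ell-1$'' claim rests.
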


\begin{proof}
Let $\sigma_g$ be a generator of $\Gal(F/\Q)$.
By Proposition \ref{proposition: ST F} and Corollary \ref{corollary: group comp}, it suffices to prove that:
\begin{enumerate}[i)] 
\item $\gamma\in \ST(\Jac(\Cc_k))$;
\item $\gamma^{\ell-1}\in  \ST(\Jac((\Cc_k)_F))$ but $\gamma^{d}\not \in  \ST(\Jac((\Cc_k)_F))$ for any proper divisor $d$ of $\ell-1$.
\end{enumerate}
For $i)$ it is enough to check that $\gamma\in \Lef(A_k)(\sigma_g)$, but this is true since one easily checks that
$$
\gamma \diag(\{ \Theta_i\}_{i\in \{1,\dots,\frac{\ell-1}{2}\}})\gamma^{-1}=\diag({}^{\sigma_g}\{ \Theta_i\}_{i\in \{1,\dots,\frac{\ell-1}{2}\}})\,
$$
(for this observe that $J_2\begin{pmatrix}u & 0\\ 0& \overline u\end{pmatrix} J_2^{-1}=\begin{pmatrix}\overline u & 0\\ 0& u\end{pmatrix}$). We now show $ii)$. It is obvious that if  $\gamma^{d}\in  \ST(\Jac((\Cc_k)_F))$, then $\frac{\ell-1}{2}$ divides $d$. It is easily checked that
$$
\gamma^{\frac{\ell-1}{2}}=\diag(\prod_{i=1}^{\frac{\ell-1}{2}}\Gamma_i,\stackrel{\frac{\ell-1}{2}}\ldots,\prod_{i=1}^{\frac{\ell-1}{2}}\Gamma_i)=\diag(J_2^{\mathcal N_g},\stackrel{\frac{\ell-1}{2}}\ldots,J_2^{\mathcal N_g})\,,
$$
where $\mathcal N_g:=|\{j\in M_k\,|\, gj\not\in M_k\}|$. The condition $\gamma^{\frac{\ell-1}{2}}\not \in  \ST(\Jac((\Cc_k)_F))$ follows from $\mathcal N_g$ being an odd integer (note that $g$ is not a quadratic residue modulo $\ell$ and then apply Lemma \ref{lemma: Gauss Lemma} below). This also implies $\gamma^{\ell-1}=\diag((-I_2)^{\mathcal N_g},\stackrel{\frac{\ell-1}{2}}\ldots,(-I_2)^{\mathcal N_g})=-I_{\ell-1}\in\ST(\Jac((\Cc_k)_F))$.
\end{proof}

\begin{lemma}[Gauss Lemma]\label{lemma: Gauss Lemma}
Let $a\in G$ and write  $\mathcal N_a:=|\{j\in M_k\,|\, aj\not\in M_k\}|$. Then
$$
(-1)^{\mathcal N_a}=\left( \frac{a}{\ell}\right)\,.
$$
\end{lemma}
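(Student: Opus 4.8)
The plan is to recognize $M_k$ as a half-system of residues modulo $\ell$ and then run the classical Gauss Lemma argument verbatim.

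First I would verify that $M_k$ is a complete set of representatives for $G/\{\pm1\}$, i.e.\ that for each $j\in G$ exactly one of $j$ and $-j$ lies in $M_k$. This is immediate from the identity $\langle x\rangle+\langle -x\rangle=\ell$, valid for all $x\in G$: applying it with $x=kj$ and with $x=j$ gives
$$
\bigl(\langle kj\rangle+\langle j\rangle\bigr)+\bigl(\langle -kj\rangle+\langle -j\rangle\bigr)=2\ell\,,
$$
so exactly one of the two sums is $<\ell$ as soon as neither of them equals $\ell$; and $\langle kj\rangle+\langle j\rangle=\ell$ would force $(k+1)j\equiv 0\pmod\ell$, which is impossible since $1\le k\le\ell-2$ (hence $k+1\not\equiv 0$) and $j\in G$. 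In particular $|M_k|=\tfrac{\ell-1}{2}$ and $G=M_k\sqcup(-M_k)$.

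Next I would fix $a\in G$ and use this first step to write, for each $j\in M_k$, a unique $j'\in M_k$ and a unique sign $\varepsilon_j\in\{\pm1\}$ with $aj\equiv\varepsilon_j\,j'\pmod\ell$; moreover $j\mapsto j'$ is a bijection of $M_k$ onto itself (it is injective, since $aj\equiv\pm aj''$ forces $j\equiv\pm j''$, hence $j=j''$ because $M_k$ is a half-system). By construction $\varepsilon_j=-1$ precisely when $aj\notin M_k$, so $\prod_{j\in M_k}\varepsilon_j=(-1)^{\mathcal N_a}$. Multiplying the congruences $aj\equiv\varepsilon_j\,j'$ over $j\in M_k$ and cancelling the invertible factor $\prod_{j\in M_k}j$ yields $a^{(\ell-1)/2}\equiv(-1)^{\mathcal N_a}\pmod\ell$. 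Comparing with Euler's criterion $a^{(\ell-1)/2}\equiv\left(\tfrac{a}{\ell}\right)\pmod\ell$ and using that both sides are $\pm1$ while $\ell>2$ gives the asserted equality.

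The only step that is not entirely standard is the first one — establishing that $M_k$ is a half-system — and it is a one-line consequence of $\langle x\rangle+\langle -x\rangle=\ell$ together with $k\not\equiv-1\pmod\ell$. So I do not anticipate any real obstacle: once $M_k$ has been identified as a half-system, the statement is precisely the generalized Gauss Lemma, whose proof is the product-and-cancel computation indicated above.
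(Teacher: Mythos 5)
Your proof is correct and follows essentially the same route as the paper: the paper also forms the product $\prod_{j\in M_k}aj=a^{\frac{\ell-1}{2}}\prod_{j\in M_k}j$, folds each $aj$ back into $M_k$ via the sign map (its $|\cdot|_k$), uses that this induces a bijection of $M_k$, and concludes by Euler's criterion. Your extra verification that $M_k$ is a half-system and that $j\mapsto j'$ is injective just makes explicit what the paper leaves as an observation.
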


\begin{proof}
Consider the product
\begin{equation}\label{equation: gauss1}
Z_k:=\prod_{j\in M_k}a j =a^{\frac{\ell-1}{2}}\prod_{j\in M_k} j \in G\,.
\end{equation}
For $j\in G$, define
$$
|j|_k:=\begin{cases}
\phantom{-} j & \text{ if $j\in M_k$,}\\
-j & \text{ if $j\not \in M_k$.}
\end{cases}
$$ 
Observe that $\{|aj|_k\}_{j\in M_k}=M_k$. Then by definition of $\mathcal N_a$, we have
\begin{equation}\label{equation: gauss2}
Z_k=(-1)^{\mathcal N_a}\prod_{j\in M_k}|aj|_k=(-1)^{\mathcal N_a}\prod_{j\in M_k}  j.
\end{equation}
By comparing (\ref{equation: gauss1}) and (\ref{equation: gauss2}), we obtain
$(-1)^{\mathcal N_a}=a^{\frac{\ell-1}{2}}=\left( \frac{a}{\ell}\right)$.
\end{proof}
As in \cite[Prop. 2.17]{FKRS12}), Lemma \ref{Lemma: ASTC} and Proposition \ref{proposition: ST group} yield the next result.
\begin{corollary}
Let $E/\Q$ be a subextension of $F/\Q$ and let $\sigma_{g^i}$ be a generator of $\Gal(F/E)$ for some $0\leq i\leq \ell-2$. Then $\ST(\Jac((\Cc_k)_E))$ is generated by $\ST(\Jac((\Cc_k)_F))$ and $\gamma^i$.
\end{corollary}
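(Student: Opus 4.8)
The plan is to read the statement off the structure of the twisted Lefschetz group, following verbatim the argument of \cite[Prop.\ 2.17]{FKRS12}. Writing $A_k:=\Jac(\Cc_k)$, I would start from Lemma \ref{Lemma: ASTC}: $\AST(A_k)=\TL(A_k)=\bigcup_{\tau\in G_\Q}\Lef(A_k)(\tau)$. Since by Theorem \ref{theorem: KR} every element of $\End((A_k)_{\Qbar})\otimes\Q$ is already defined over $F$, the group $\Lef(A_k)(\tau)$ depends only on the image $\sigma$ of $\tau$ in $\Gal(F/\Q)$; I would write it $\Lef(A_k)(\sigma)$, so that $\AST(A_k)=\bigsqcup_{\sigma\in\Gal(F/\Q)}\Lef(A_k)(\sigma)$, and by the proof of Lemma \ref{Lemma: ASTC} the piece $\Lef(A_k)(\id)=\Hg(A_k)=\AST((A_k)_F)$ is connected and is the identity component (this is Corollary \ref{corollary: group comp}, with component group $\Gal(F/\Q)$).

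Next I would apply the same description after base change to $E$, obtaining $\AST((A_k)_E)=\bigcup_{\tau\in G_E}\Lef(A_k)(\tau)=\bigsqcup_{\sigma\in\Gal(F/E)}\Lef(A_k)(\sigma)$; equivalently, $\AST((A_k)_E)$ is the preimage in $\AST(A_k)$ of $\Gal(F/E)\subseteq\Gal(F/\Q)$ under the component map. By part i) of the proof of Proposition \ref{proposition: ST group}, $\gamma\in\Lef(A_k)(\sigma_g)$, and since $\Lef(A_k)(\sigma)\,\Lef(A_k)(\sigma')\subseteq\Lef(A_k)(\sigma\sigma')$ this gives $\gamma^{i}\in\Lef(A_k)(\sigma_{g^i})$. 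As $\sigma_{g^i}$ generates $\Gal(F/E)$ by hypothesis, I would conclude that $\AST((A_k)_E)$ is generated as an algebraic group by its identity component $\AST((A_k)_F)$ together with $\gamma^{i}$.

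Finally I would pass to maximal compact subgroups. By definition $\ST(\Jac((\Cc_k)_E))$ is a maximal compact subgroup of $\AST((A_k)_E)\otimes_\Q\C$; the proof of Proposition \ref{proposition: ST group} already realizes $\gamma$, hence $\gamma^i$, inside $\USp(\ell-1)$ as an element of finite order normalizing $\ST(\Jac((\Cc_k)_F))$. Thus the subgroup $H\subseteq\USp(\ell-1)$ generated by $\ST(\Jac((\Cc_k)_F))$ and $\gamma^i$ is compact, has identity component the maximal compact subgroup $\ST(\Jac((\Cc_k)_F))$ of $\AST((A_k)_F)\otimes_\Q\C$, and surjects onto the finite component group $\Gal(F/E)$; hence $H$ is maximal compact and, up to conjugacy in $\USp(\ell-1)$, equals $\ST(\Jac((\Cc_k)_E))$. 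The main obstacle --- really the only nonformal point --- is the compatibility of ``take a maximal compact subgroup'' with the decomposition into connected components; this is exactly the content of the general formalism of \cite[\S2]{FKRS12} (in particular of \cite[Prop.\ 2.17]{FKRS12}) that the proof would cite, and given it the rest is bookkeeping plus the explicit generator $\gamma^i$ supplied by Proposition \ref{proposition: ST group}.
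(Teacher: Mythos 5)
Your proposal is correct and follows essentially the same route as the paper, whose proof is precisely the one-line observation that Lemma \ref{Lemma: ASTC} and Proposition \ref{proposition: ST group} combined with the formalism of \cite[Prop.\ 2.17]{FKRS12} give the result; you have simply unpacked that citation (the component-group description of $\TL(A_k)$ over intermediate fields, $\gamma^i\in\Lef(A_k)(\sigma_{g^i})$, and the passage to maximal compact subgroups). No gaps to report.
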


\subsection{Equidistribution statements}\label{section: proof}

Our next goal is to prove the generalized Sato-Tate Conjecture for $\Jac(\Cc_k)$; roughly speaking, the equidistribution of the Frobenius conjugacy classes on $\ST(\Jac(\Cc_k))$ with respect to its Haar measure. 

Let us first recall the paradigm of Serre to prove results of equidistribution. Let $\mathcal G$ be a compact group and $\mathcal X$ be the set of its conjugacy classes. Let $P$ denote an infinite subset of the set of primes of a number field, and let $\{\mathfrak p_i\}_{i\geq 1}$ be an ordering by norm of $P$ (there are in principle many such orderings, but equidistribution statements do not depend of fixing a particular one). Assume given an assigment $\mathcal A\colon \mathfrak p\in P\rightarrow x_{\mathfrak p}\in \mathcal X$. If $\varrho: \mathcal G\rightarrow \GL_n(\C)$ is a representation of $\mathcal G$, write
$$
L_{\mathcal A}(\varrho,s)=\prod_{\p\in P}\det(1-\varrho(x_\p)N\p^{-s})^{-1}\,.
$$

\begin{theorem}[\cite{Ser68}, p. I-23]\label{theorem: Serre} Assume that for every irreducible nontrivial representation~$\varrho$ of~$\mathcal G$ the Euler product $L_{\mathcal A}(\varrho,s)$ converges for $\Re(s)>1$ and extends to a holomorphic and nonvanishing function for $\Re(s)\geq 1$. Then, the sequence $\{x_{\p_i}\}_{i\geq 1}$ is equidistributed over $\mathcal X$ with respect to the projection on $\mathcal X$ of the Haar measure of $\mathcal G$.
\end{theorem}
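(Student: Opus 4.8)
The plan is to follow the classical argument of \cite{Ser68}: reduce equidistribution, via the Peter--Weyl theorem, to the convergence of the character averages $\frac1N\sum_{i\le N}\chi_\varrho(x_{\p_i})$, and then control these averages through the behaviour of the Euler products $L_{\mathcal A}(\varrho,s)$ on the line $\Re(s)=1$ by means of a Tauberian theorem.

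First I would note that, by Peter--Weyl, the $\C$-span of the characters $\chi_\varrho$ of the irreducible representations of $\mathcal G$ is dense for the uniform norm in the space of continuous class functions on $\mathcal G$, equivalently in the space of continuous functions on $\mathcal X$. Hence it suffices to show, for every irreducible $\varrho$, that $\frac1N\sum_{i=1}^N\chi_\varrho(x_{\p_i})$ tends to $\int_{\mathcal G}\chi_\varrho\,d\mu$ (with $\mu$ the Haar measure), which is $1$ for the trivial $\varrho$ and $0$ otherwise. A routine partial summation makes this equivalent to the corresponding statement for the ordering by norm: that $\sum_{\p\in P,\,N\p\le x}\chi_\varrho(x_\p)$ is asymptotic to $|P(x)|$ times that integral, where $P(x):=\{\p\in P:N\p\le x\}$.

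Next I would pass to logarithmic derivatives. Since $\mathcal G$ is compact, the eigenvalues of $\varrho(x_\p)$ are unitary, so $L_{\mathcal A}(\varrho,s)$ converges absolutely for $\Re(s)>1$ and
$$
-\frac{L_{\mathcal A}'}{L_{\mathcal A}}(\varrho,s)=\sum_{\p\in P}\sum_{m\ge 1}\frac{\Tr(\varrho(x_\p)^m)\,\log N\p}{N\p^{ms}}\,.
$$
The contribution of the terms with $m\ge 2$ is a Dirichlet series absolutely convergent for $\Re(s)>\tfrac12$, hence holomorphic on $\Re(s)\ge 1$; thus $-\frac{L_{\mathcal A}'}{L_{\mathcal A}}(\varrho,s)$ differs by such a function from $D_\varrho(s):=\sum_{\p\in P}\chi_\varrho(x_\p)\,\log N\p\cdot N\p^{-s}$. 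For $\varrho$ irreducible and nontrivial, the hypothesis that $L_{\mathcal A}(\varrho,s)$ is holomorphic and nonvanishing on $\Re(s)\ge 1$ forces $D_\varrho(s)$ to be holomorphic there. For the trivial $\varrho$, $L_{\mathcal A}(\mathbf 1,s)=\prod_{\p\in P}(1-N\p^{-s})^{-1}$ agrees, up to finitely many Euler factors, with a partial Dedekind zeta function; its simple pole at $s=1$, of residue the Dirichlet density $c>0$ of $P$ (which in every application made in this paper is provided by the Chebotarev density theorem for $F/\Q$), means that $D_{\mathbf 1}(s)$ extends to $\Re(s)\ge1$ with a simple pole of residue $c$ at $s=1$.

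Finally I would run a Tauberian theorem. As $\chi_\varrho(x_\p)$ need not be real, I would use the positivity trick: for a fixed nontrivial irreducible $\varrho$, the four Dirichlet series $(\dim\varrho)\,D_{\mathbf 1}(s)\pm\Re D_\varrho(s)$ and $(\dim\varrho)\,D_{\mathbf 1}(s)\pm\operatorname{Im} D_\varrho(s)$ — where $\Re$ and $\operatorname{Im}$ act on the Dirichlet coefficients — have non-negative coefficients and, by the previous step, extend to $\Re(s)\ge1$ with a simple pole of residue $(\dim\varrho)c$ at $s=1$; Wiener--Ikehara then gives $\sum_{\p\in P(x)}\bigl(\dim\varrho\pm\Re\chi_\varrho(x_\p)\bigr)\log N\p\sim(\dim\varrho)\,cx$, and similarly with $\operatorname{Im}$, while for $\varrho=\mathbf 1$ it gives $\sum_{\p\in P(x)}\log N\p\sim cx$, whence $|P(x)|\sim cx/\log x$ by partial summation. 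Subtracting yields $\sum_{\p\in P(x)}\chi_\varrho(x_\p)\log N\p=o(x)$, so the normalized character averages vanish in the limit, and returning to the ordering $\{\p_i\}_{i\ge1}$ is partial summation once more. I expect this last Tauberian step — the positivity device and the bookkeeping of the pole orders at $s=1$ — to be the only genuine difficulty; the rest of the argument is formal.
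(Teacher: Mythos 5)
Your argument is correct and is essentially the proof behind the paper's citation: the paper gives no proof of this statement (it is quoted from Serre \cite{Ser68}), and your route --- the Weyl/Peter--Weyl equidistribution criterion, passage to logarithmic derivatives, and the positivity trick combined with Wiener--Ikehara --- is the standard one underlying that reference. The one point where you go beyond the stated hypotheses, namely the simple pole at $s=1$ of $D_{\mathbf 1}(s)=\sum_{\p\in P}\log N\p\cdot N\p^{-s}$, is genuinely needed (for an arbitrary infinite $P$ the statement as transcribed fails, e.g.\ for a very sparse $P$ with all $x_{\p}$ equal to the identity class, where every $L_{\mathcal A}(\varrho,s)$ converges well beyond $\Re(s)=1$); this extra input is part of Serre's actual setting ($P$ cofinite in the set of primes of the number field) and holds in every application made in the paper --- there $P$ is always cofinite, so the Dedekind zeta function of the relevant field supplies the pole and Chebotarev is not even required --- so supplying it as you do is the right reading of the statement.
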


Returning to our case, let $E/\Q$ be a subextension of $F/\Q$. Denote by $\mathcal G_E$ the group $\ST(\Jac(\Cc_k)_E)$ and let $\mathcal X_E$ the set of conjugacy classes of $\mathcal G_E$. For $\wp$ a prime of $E$, we define a conjugacy class~$x_\wp$ of $\mathcal G_E$ using the isomorphism $\mathcal G_E\simeq \mathcal G_F\rtimes \Gal(F/E)$. Indeed, set
$$
x_{\wp}:=\left(\iota_{n_k}\left(\diag\left(\frac{J_{(ka_1,a_1)}(\p)}{\sqrt {N\p}},\frac{{J_{(ka_1,a_1)}(\overline\p)}}{\sqrt {N\p}},\dots, \frac{J_{(ka_{r_k},a_{r_k})}(\p)}{\sqrt {N\p}},\frac{{J_{(ka_{r_k},a_{r_k})}(\overline\p)}}{\sqrt {N\p}}\right)\right), \Frob_{\wp}\right)\in \mathcal X_E\,,
$$
where $a_1,\dots a_{r_k}$ is a complete system of representatives of $M_k/ W_k$, and $\p$ is a prime of $F$ lying over~$\wp$.\footnote{One may give an explicit matricial description of $x_\wp$ by making use of the results of Br\"unjes \cite[Prop. 11.4]{Bru04}.} 
Let now $\{\wp_i\}_{i\geq 1}$ be an ordering by norm of the set of primes of good reduction for $(\Cc_k)_E$. Define the assigment 
$$
\mathcal A_E\colon \{\wp_i\}_{i\geq 1}\rightarrow \mathcal X_E \,,\qquad \wp_i\mapsto x_{\wp_i}\,.
$$

\begin{conjecture}[Generalized Sato-Tate]\label{conjecture: ST} 
The sequence $x_E:=\{x_{\wp_i}\}_{i\geq 1}$ is equidistributed on $\mathcal X_E$ with respect to the image on $\mathcal X_E$ of the Haar measure of $\mathcal G_E$.
\end{conjecture}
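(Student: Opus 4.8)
The plan is to deduce the conjecture from Serre's criterion (Theorem \ref{theorem: Serre}), applied to the compact group $\mathcal G_E=\ST(\Jac(\Cc_k)_E)$ and the assignment $\mathcal A_E$. By Theorem \ref{theorem: Serre} it suffices to show that for every irreducible nontrivial representation $\varrho$ of $\mathcal G_E$, the Euler product $L_{\mathcal A_E}(\varrho,s)$ extends to a holomorphic, nonvanishing function on $\Re(s)\ge 1$. The first step is to parametrize the irreducible representations of $\mathcal G_E\simeq \mathcal G_F\rtimes \Gal(F/E)$ using Clifford theory: since $\mathcal G_F\cong \Unitary(1)^{r_k}$ is abelian, every irreducible representation of $\mathcal G_E$ is of the form $\Ind_{\mathcal H}^{\mathcal G_E}\psi$, where $\psi$ is a character of the subgroup $\mathcal H=\mathcal G_F\rtimes \Gal(F/E')$ (for a subextension $E'/E$ of $F/E$) that restricts on $\mathcal G_F$ to a $\Gal(F/E')$-stable character $\chi$ of $\Unitary(1)^{r_k}$. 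Such a $\chi$ is given by a tuple of integers $(m_1,\dots,m_{r_k})\in\Z^{r_k}$ via $\diag(u_1,\overline u_1,\dots)\mapsto \prod u_i^{m_i}$.

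The second and central step is to identify $L_{\mathcal A_E}(\varrho,s)$, via the Artin formalism for induced representations, with a Hecke $L$-function: by inductivity, $L_{\mathcal A_E}(\Ind_{\mathcal H}^{\mathcal G_E}\psi,s)=L_{\mathcal A_{E'}}(\psi,s)$ over the primes of $E'$, and the latter factors (using that $\psi$ restricted to $\mathcal G_F$ is the character $\chi$ above and tracking the $\Frob_\wp$-twist) as an $L$-function of a Hecke character of $F$ built out of the Jacobi-sum Grössencharakteren: concretely $\prod_i J_{(ka_i,a_i)}^{m_i}$ composed with the norm from $E'$ down, suitably unitarized. Here one uses the explicit formula for $x_\wp$ and the compatibility relations \eqref{equation: comm} to match the Frobenius eigenvalues on $V_{(ka_i,a_i)}$ with the entries of $x_\wp$. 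Since each $J_{(ka,a)}$ has weight $1$, the product $\prod_i J_{(ka_i,a_i)}^{m_i}$ divided by an appropriate power of the norm is a unitary Hecke character $\xi$; by Hecke's theorem, $L(\xi,s)$ is holomorphic and nonvanishing on $\Re(s)\ge 1$ provided $\xi$ is nontrivial.

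The main obstacle — and the place where non-degeneracy of $(\ell,k)$ enters — is proving that the relevant Hecke character $\xi$ is \emph{nontrivial} whenever $\varrho$ is nontrivial. The representation $\varrho$ being nontrivial forces either the tuple $(m_1,\dots,m_{r_k})$ to be nonzero, or (if it is zero) the $\Gal(F/E')$-component $\psi$ to be a nontrivial character of the cyclic group $\Gal(F/E')$, in which case $L_{\mathcal A_E}(\varrho,s)$ is a quotient of Dedekind zeta functions of subfields of $F$ and the nonvanishing on $\Re(s)\ge 1$ is classical (Hecke/de la Vallée Poussin). In the case $(m_i)\ne 0$, one must show that $\xi=\prod_i J_{(ka_i,a_i)}^{m_i}\cdot N^{-c}$ is not a finite-order character, equivalently that its infinity type $\sum_i m_i\, a_iM_k^{-1}$ is not a constant function on $G$; by \eqref{equation: inf type} this infinity type is exactly (a multiple of) the vector $\sum_i m_i\,\Phi_k^*([a_i])$ in $\Z[G/W_k]$, and it being non-constant for all nonzero $(m_i)$ is precisely the statement that $\Phi_k^*$ has maximal rank modulo the all-ones vector — which by Lemma \ref{lemma: image phik} is equivalent to $\det(D_k)\ne 0$, i.e. to $(\ell,k)$ being non-degenerate. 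Assembling these cases verifies Serre's hypotheses and gives the equidistribution. I would also remark, as the text suggests, that once $\ST(\Jac(\Cc_k))$ is known explicitly this argument can be made entirely elementary, and that the statement over general $E$ follows from the case $E=\Q$ together with the description of $\mathcal G_E$ in terms of $\gamma^i$.
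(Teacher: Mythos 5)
Your proposal follows the same architecture as the paper's proof of Conjecture \ref{conjecture: ST} for non-degenerate pairs (Theorems \ref{theorem: ST F} and \ref{theorem: ST Q}): Serre's criterion (Theorem \ref{theorem: Serre}), the classification of the irreducible representations of $\mathcal G_F\rtimes\Gal(F/E)$ via \cite[Prop.~25]{Ser77}, Hecke's theorem for unitarized nontrivial Gr\"ossencharaktere, and nontriviality of the relevant character deduced from $\det(D_k)\neq 0$; your infinity-type argument via $\Phi^*_k$ and Lemma \ref{lemma: image phik} is the same computation as (\ref{equation: inf dem}), and your handling of the purely finite-order case and of general $E$ is consistent with the paper's remark after Theorem \ref{theorem: ST Q}.

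The gap is in the step where you identify $L_{\mathcal A_{E'}}(\psi,s)$ with the $L$-function of a single Hecke character, namely ``$\prod_i J_{(ka_i,a_i)}^{m_i}$ composed with the norm from $E'$ down, suitably unitarized''. As written this fails at every prime $\wp$ of $E'$ that is not split completely in $F$: the Euler factor of $L_{\mathcal A_{E'}}(\psi,s)$ at $\wp$ is linear in $N\wp^{-s}$ with eigenvalue equal to a root of unity times $\prod_i\bigl(J_{(ka_i,a_i)}(\p)/\sqrt{N\p}\bigr)^{m_i}$, where $\p\mid\wp$ is a prime of $F$ and $N\p=N\wp^{f_\wp}$; an $L$-function of a Hecke character of $F$ instead runs over primes of $F$ in the variable $N\p^{-s}$, and the character of $E'$ obtained by composing with the conorm $\wp\mapsto\wp\mathcal O_F$ takes at $\wp$ the value $\bigl(\prod_i J_{(ka_i,a_i)}(\p)^{m_i}\bigr)^{g_\wp}$, with $g_\wp$ the number of primes of $F$ above $\wp$, i.e.\ a power of the required eigenvalue rather than the eigenvalue itself (this is exactly the phenomenon in Proposition \ref{proposition: low gros}, where the conorm construction only recovers $J_{(ka,a)}(\p)^{n_{k,f}^2/f}$). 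A genuine descent of the Gr\"ossencharakter from $F$ to $E'$ with matching Euler factors is a nontrivial extra input that your recipe does not supply. The paper is engineered to avoid precisely this point: Hecke's theorem is applied only to the Gr\"ossencharakter $\Psi$ of $F$ (Theorem \ref{theorem: ST F}), and the descent is replaced by the Artin-formalism identity $L_{\mathcal A_F}(\phi_{b_1,\dots,b_{r_k}},s)=L_{\mathcal A_\Q}(\theta,s)^{n}$ in the proof of Theorem \ref{theorem: ST Q}, from which holomorphy and nonvanishing of $L_{\mathcal A_\Q}(\theta,s)$ on $\Re(s)\geq 1$ are extracted, since an Euler product convergent for $\Re(s)>1$ whose $n$-th power extends holomorphically and without zeros to $\Re(s)\geq 1$ does so itself. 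To repair your argument, either adopt this device or actually prove that the character you need descends to $E'$; as it stands, the identification is the one step that would not go through.
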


\subsection{Equidistribution over $\Q(\zeta_\ell)$}

\begin{theorem}\label{theorem: ST F} Assume that $(\ell,k)$ is non-degenerate. 
The sequence $x_F:=\{x_{\p_i}\}_{i\geq 1}$ is equidistributed on $\mathcal X_F$, i.e. Conjecture \ref{conjecture: ST} holds for $\Jac(\Cc_k)_F$.
\end{theorem}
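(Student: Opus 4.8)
The plan is to apply Serre's criterion, Theorem~\ref{theorem: Serre}, with $\mathcal G=\mathcal G_F=\ST(\Jac(\Cc_k)_F)$, $P$ the set of primes $\p$ of $F$ of good reduction for $(\Cc_k)_F$, and the assignment $\mathcal A_F\colon\p\mapsto x_\p$ described above. By Proposition~\ref{proposition: ST F}, $\mathcal G_F$ is the torus $\iota_{n_k}(\diag(u_1,\overline u_1,\dots,u_{r_k},\overline u_{r_k}))$ with $u_i\in\Unitary(1)$; it is abelian and connected, so its irreducible representations are $1$-dimensional characters, and every such character is of the form $\varphi_{\underline m}(\diag(u_1,\overline u_1,\dots))=\prod_{i=1}^{r_k}u_i^{m_i}$ for a tuple $\underline m=(m_1,\dots,m_{r_k})\in\Z^{r_k}$. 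The nontrivial ones are those with $\underline m\neq 0$. So the whole problem reduces to showing: for every $\underline m\neq 0$, the Euler product $L_{\mathcal A_F}(\varphi_{\underline m},s)=\prod_{\p}(1-\varphi_{\underline m}(x_\p)N\p^{-s})^{-1}$ converges for $\Re(s)>1$ and extends holomorphically and without zeros to $\Re(s)\geq 1$.

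The key step is to recognize this Euler product as the $L$-function of a Hecke character. Unwinding the definition of $x_\p$, one has $\varphi_{\underline m}(x_\p)=\prod_{i=1}^{r_k}\big(J_{(ka_i,a_i)}(\p)/\sqrt{N\p}\big)^{m_i}$ (using $u_i$ and $\overline u_i=1/u_i$ on the conjugate slots, which is consistent because $J_{(ka_i,a_i)}(\p)J_{(ka_i,a_i)}(\overline\p)=N\p$). Hence $L_{\mathcal A_F}(\varphi_{\underline m},s)=L\big(\psi_{\underline m},s+\tfrac{1}{2}\sum_i m_i\big)$ up to the finitely many bad Euler factors at $\p\mid\ell$, where $\psi_{\underline m}:=\prod_{i=1}^{r_k}J_{(ka_i,a_i)}^{m_i}$ is a Hecke character (Gr\"ossencharakter) of $F$ of modulus dividing $\ell^2\mathcal O_F$ and some nonzero weight, constructed from the Jacobi-sum characters recalled in~\S\ref{section: Hecke characters}. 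By Hecke's theorem on the holomorphicity and nonvanishing for $\Re(s)\geq 1$ of the $L$-function attached to a nontrivial unitarized Hecke character, it suffices to check that the unitarization $\psi_{\underline m}/|\psi_{\underline m}|$ is \emph{nontrivial} whenever $\underline m\neq 0$. (The convergence for $\Re(s)>1$ is automatic from the weight-one normalization, i.e. $|\varphi_{\underline m}(x_\p)|=1$.) Adjusting bad factors at $\p\mid\ell$ only multiplies by a nonvanishing holomorphic factor on $\Re(s)\ge 1$, so it does not affect the conclusion.

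The main obstacle — and the point where non-degeneracy of $(\ell,k)$ enters — is precisely the nontriviality of $\psi_{\underline m}$ for $\underline m\neq 0$. A finite-order Hecke character is trivial iff its infinity type vanishes and it is trivial on principal ideals; so $\psi_{\underline m}$ being trivial would force its infinity type $\sum_i m_i\,(a_iM_k^{-1})$ to be zero in $\Z[G/W_k]$ (using the infinity type $aM_k^{-1}$ of $J_{(ka,a)}$ from~(\ref{equation: inf type})). Writing out the infinity type in the basis $M_k/W_k\cup M_{-k}/W_k$ and comparing with $\Phi^*_k$ from Definition~\ref{definition: Demjanenko matrix} and Lemma~\ref{lemma: image phik}, the vanishing of $\sum_i m_i (a_i M_k^{-1})$ for some nonzero $\underline m$ is exactly equivalent to a nontrivial relation among the rows indexed by $M_k/W_k$, i.e. to $\det(D_k)=0$; since $(\ell,k)$ is non-degenerate, $\det(D_k)\neq 0$, so no nonzero $\underline m$ can make the infinity type vanish, and $\psi_{\underline m}$ has infinite order, in particular nontrivial unitarization. (Strictly, one should also note that the weight contributes so that, after shifting $s$, the relevant character in Hecke's theorem is the genuinely nontrivial unitarized one.)

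Finally, having verified Serre's hypothesis for all irreducible nontrivial $\varphi_{\underline m}$, Theorem~\ref{theorem: Serre} yields that $\{x_{\p_i}\}_{i\geq 1}$ is equidistributed on $\mathcal X_F$ with respect to the image of the Haar measure of $\mathcal G_F$, which is the assertion of Conjecture~\ref{conjecture: ST} for $E=F$. This completes the proof.
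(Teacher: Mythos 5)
Your overall strategy is the same as the paper's: apply Theorem~\ref{theorem: Serre} to the characters $\phi_{b_1,\dots,b_{r_k}}$ of the torus of Proposition~\ref{proposition: ST F}, identify the resulting Euler product (up to finitely many factors) with the $L$-function of a Hecke character built from the Jacobi-sum characters $J_{(ka_i,a_i)}$, invoke Hecke's theorem, and use $\det(D_k)\neq 0$ to force nontriviality. However, there is a genuine gap at the nontriviality step. Hecke's theorem requires the \emph{unitarized} character $\Psi=\psi_{\underline m}\cdot N(\cdot)^{-w/2}$ (with $w=\sum_i m_i$) to be nontrivial, and this is \emph{not} implied by what you prove, namely that $\psi_{\underline m}$ itself has nonzero infinity type and hence infinite order. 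Indeed, $\Psi$ is trivial exactly when $\psi_{\underline m}$ coincides with the norm power $N(\cdot)^{w/2}$ on ideals coprime to the modulus; a norm power has infinite order and nonzero infinity type, so your final inference ``$\psi_{\underline m}$ has infinite order, in particular nontrivial unitarization'' is a non sequitur. Your parenthetical remark about the weight shift does not address this case.

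Ruling out $\psi_{\underline m}=N(\cdot)^{w/2}$ is precisely where the $-\tfrac12$ entries of the Demjanenko matrix of Definition~\ref{definition: Demjanenko matrix} are needed, and it is exactly how the paper argues: assuming $\Psi$ trivial, the ideal factorization in (\ref{equation: comm}) gives, by unique factorization (as in (\ref{equation: inf dem})), the relations $\sum_{j\in M_k/W_k} m_j\,E_k(-t^{-1}j)=B$ for all $t\in G$, where $B=\tfrac{w}{2}$; equivalently $\sum_j m_j\bigl(E_k(-t^{-1}j)-\tfrac12\bigr)=0$ for all $t$, i.e. $D_k$ is singular, contradicting non-degeneracy. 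Your computation only excludes the case where the right-hand side is $0$ (zero infinity type), not the case where it is the constant $\tfrac{w}{2}$. The fix is immediate with the same hypothesis $\det(D_k)\neq 0$, but as written the key verification that Hecke's theorem applies is incomplete.
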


\begin{proof}
Note that the group $\mathcal G_F$ is isomorphic to
$$
\Unitary(1)\times \stackrel{r_k}\ldots \times\Unitary(1).
$$
The irreducible representations of $\Unitary(1)\times \stackrel{r_k}\ldots\times \Unitary(1)$ are the characters
\begin{equation}\label{equation: irred rep}
\phi_{b_1,\dots,b_{r_k}}\colon \Unitary(1)\times \stackrel{r_k}\ldots\times\Unitary(1)\rightarrow \C^*,\qquad \phi_{b_1,\dots,b_{r_k}}(z_1,\dots,z_{r_k})=\prod_{i=1}^{r_k}z_i^{b_i}\,,
\end{equation}
where $b_1,\dots,b_{r_k}\in \Z$. By Theorem \ref{theorem: Serre}, we have to prove that for any $b_1,\dots,b_{r_k}\in \Z$, not all of them zero, the $L$-function
$$
L_{\mathcal A_F}(\phi_{b_1,\dots, b_{r_k}},s)=\prod_{i\geq 1}\left(1-\frac{J_{(ka_1,a_1)}(\p_i)^{b_1}\cdot\dots\cdot J_{(ka_{r_k},a_{r_k})}(\p_i)^{b_{r_k}}}{\sqrt {N\p_i}^{b_1+\dots+b_{r_k}}}N\p_i^{-s  }\right)^{-1}
$$
is holomorphic and nonvanishing for $\Re(s)\geq 1$.
But, up to a finite number of local Euler factors, this is just the $L$-function $L(\Psi,s)$ of the unitarized Gr\"ossencharakter 
$$
\Psi:=\frac{J_{(ka_1,a_1)}(\cdot)^{b_1}\cdot\dots\cdot J_{(ka_{r_k},a_{r_k})}(\cdot)^{b_{r_k}}}{\sqrt {N(\cdot)}^{b_1+\dots+b_{r_k}}}\,.  
$$
By unitarized we mean that it takes values in $\Unitary(1)\subseteq \C^*$ and not just in $\C^*$. Hecke \cite{Hec20} showed that the $L$-function of a nontrivial unitarized Gr\"ossencharakter is holomorphic and nonvanishing for $\Re(s)\geq 1$.  Therefore, it only remains to prove that the Gr\"ossencharakter $\Psi$ is nontrivial. Suppose it were, and let $B:=\frac{b_1+\dots+b_{r_k}}{2}$. For every prime $\p$ of $F$, reindexing the $b_j$'s on the set $M_k/W_k$ for notation simplicity, we have that (\ref{equation: comm}) implies
\begin{equation}\label{equation: inf dem}
\mathcal O_F=\Psi(\p)\mathcal O_F=\prod_{t\in G}\frac{\prod_{j\in M_k/W_k}({}^{\sigma_t}\p)^{b_j E_k(-t^{-1}j)}}{({}^{\sigma_t}\p)^B}\,.
\end{equation}
It follows from Artin's Theorem on independence of characters that for every $t\in G$, we have $\sum_{j\in M_k/W_k}b_jE_k(-t^{-1}j)=B$. 
But this implies that $\det(D_k)=0$, which is a contradiction with the assumption of $(\ell,k)$ being non-degenerate.
\end{proof}

The proof above extends naturally to the case of an absolutely simple abelian variety with complex multiplication (see \cite[\S3.5]{Fit14}).

\subsection{Irreducible representations of $\Unitary(1)\times\stackrel{r_k}\ldots\times \Unitary(1)\rtimes G$}\label{section: irred reps}

In \S\ref{section: over Q} we will prove the generalized Sato-Tate Conjecture for $\Jac(\Cc_k)$ over $\Q$ when $(\ell,k)$ is a non-degenerate pair.  
Thus we are led by Theorem \ref{theorem: Serre} to the study of the irreducible representations of  $\mathcal G_\Q\simeq\Unitary(1)\times \stackrel{r_k}\ldots\times \Unitary(1)\rtimes G$, where the action of a generator $g$ of $G$ on $\Unitary(1)\times \stackrel{r_k}\ldots\times \Unitary(1)$ is given by the rule
$$
\iota_{n_k}({}^{g}\diag(u_1\overline u_1,\dots, u_{r_k},\overline u_{r_k}))=\gamma \iota_{n_k}(\diag(u_{1},\overline u_1,\dots, u_{r_k},\overline u_{r_k}))\gamma^{-1}\,.
$$
To shorten notation, we will write
$$
\mathcal G_0:=\Unitary(1)\times \stackrel{r_k}\ldots\times \Unitary(1)\,,\qquad \mathcal G:=\mathcal G_0\rtimes G\,.
$$

We now follow \cite[\S 8.2]{Ser77}, where the irreducible representations of a semidirect product by an abelian group are characterized.
For any character $\phi_{b_1,\dots,b_{r_k}}$ of $\mathcal G_0$ as in (\ref{equation: irred rep}), let $H_{b_1,\dots,b_{r_k}}\subseteq G$ be the subgroup such that
\begin{equation}\label{equation: def H}
\phi_{b_1,\dots,b_{r_k}}(u_1,\dots, u_{r_k})= \phi_{b_{1},\dots,b_{r_k}}({}^h(u_1,\dots, u_{r_k}))\qquad \text{ for every }h\in H_{b_1,\dots,b_{r_k}}\,.
\end{equation}
Write  $\mathcal H:=\mathcal G_0\rtimes H_{b_1,\dots,b_{r_k}}$. One has that
$$
\phi_{b_1,\dots,b_{r_k}}\colon\mathcal H\rightarrow \C^*,\qquad \phi_{b_1,\dots,b_{r_k}}(z_1,\dots,z_{r_k},h)=\prod_{i=1}^{r_k}z_i^{b_i}
$$
is a character of $\mathcal H$. Then by \cite[Prop. 25]{Ser77} every irreducible representation of $\mathcal G$ is of the form
$\theta:=\Ind_{\mathcal G}^{\mathcal H}(\chi \otimes \phi_{b_1,\dots,b_{r_k}})$, where $\chi$ is a character of $H_{b_1,\dots,b_{r_k}}$ that we may view as a character of $\mathcal H$ by composing with the projection $\mathcal H\rightarrow H_{b_1,\dots,b_{r_k}}$.

\subsection{Equidistribution over $\Q$}\label{section: over Q}

In this section, we write $\doteq$ to indicate equality of $L$-functions up to a finite number of local Euler factors.

\begin{theorem}\label{theorem: ST Q} Assume that $(\ell,k)$ is non-degenerate. The sequence $x_\Q:=\{x_{p_i}\}_{i\geq 1}$ is equidistributed on $\mathcal X_\Q$, i.e. Conjecture \ref{conjecture: ST} holds for $\Jac(\Cc_k)$.
\end{theorem}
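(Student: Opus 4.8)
\textbf{Proof plan for Theorem \ref{theorem: ST Q}.}
The plan is to apply Serre's criterion (Theorem \ref{theorem: Serre}) to the group $\mathcal G=\mathcal G_\Q\simeq \mathcal G_0\rtimes G$, exactly as in the proof of Theorem \ref{theorem: ST F}, but now over $\Q$. By the classification recalled in \S\ref{section: irred reps}, every irreducible representation of $\mathcal G$ is of the form $\theta=\Ind_{\mathcal H}^{\mathcal G}(\chi\otimes \phi_{b_1,\dots,b_{r_k}})$, where $\mathcal H=\mathcal G_0\rtimes H_{b_1,\dots,b_{r_k}}$, the integers $b_1,\dots,b_{r_k}$ are fixed, $H_{b_1,\dots,b_{r_k}}$ is the stabilizer in $G$ of the character $\phi_{b_1,\dots,b_{r_k}}$, and $\chi$ is a character of $H_{b_1,\dots,b_{r_k}}$. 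We must show that $L_{\mathcal A_\Q}(\theta,s)$ is holomorphic and nonvanishing for $\Re(s)\ge 1$ whenever $\theta$ is nontrivial. The first step is the standard reduction: by the inductive property of Artin $L$-functions, $L_{\mathcal A_\Q}(\theta,s)$ equals (up to finitely many Euler factors) an $L$-function over the fixed field $E:=F^{H_{b_1,\dots,b_{r_k}}}$ of the representation $\chi\otimes\phi_{b_1,\dots,b_{r_k}}$ of $\mathcal G_E\simeq \mathcal H$. Concretely, using the explicit description of $x_\wp$ over $E$, this $L$-function is, up to finitely many factors, the Hecke $L$-function $L(\Psi,s)$ of the Gr\"ossencharakter
$$
\Psi:=\chi\circ\Frob\cdot\frac{J_{(ka_1,a_1)}(\cdot)^{b_1}\cdots J_{(ka_{r_k},a_{r_k})}(\cdot)^{b_{r_k}}}{\sqrt{N(\cdot)}^{(b_1+\dots+b_{r_k})}}
$$
of $E$ (the Artin character $\chi$ of $\Gal(F/E)$ being viewed as a finite-order Hecke character of $E$ via class field theory). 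Since $\Psi$ is a unitarized Gr\"ossencharakter, Hecke's theorem \cite{Hec20} gives holomorphy and nonvanishing for $\Re(s)\ge 1$ as soon as $\Psi$ is nontrivial.

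The second and crucial step is therefore to show: if $\theta$ is nontrivial, then $\Psi$ is nontrivial. Suppose $\Psi$ were trivial. Restricting $\Psi$ to principal ideals generated by elements $\alpha\equiv^\times 1\pmod{\ell^2}$ (where the finite-order part $\chi\circ\Frob$ disappears) and using the infinity-type formula (\ref{equation: inf type}) together with (\ref{equation: comm}), we get, for the subgroup $H:=H_{b_1,\dots,b_{r_k}}$ and a generator $t$ of $G/H$, that for every $t\in G$
$$
\sum_{j\in M_k/W_k}b_j\,E_k(-t^{-1}j)=B\,,\qquad B:=\tfrac{1}{2}\sum_j b_j\,,
$$
for all $t$ in a complete set of coset representatives of $G/H$ — but in fact, because $\phi_{b_1,\dots,b_{r_k}}$ is $H$-invariant, this identity propagates to all $t\in G$. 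This is exactly the statement that the vector $(b_j)_j$, together with the constant $B$, lies in the kernel of (the transpose of) the generalized Demjanenko matrix $D_k$; since $(\ell,k)$ is non-degenerate, $\det(D_k)\ne 0$, which forces all $b_j=0$ and $B=0$. Then $\phi_{b_1,\dots,b_{r_k}}$ is trivial, $H=G$, $E=\Q$, and $\theta=\chi$ is a nontrivial character of $G$; its $L$-function is a classical Dirichlet $L$-function, which is holomorphic and nonvanishing for $\Re(s)\ge 1$. Hence in all cases $\Psi$ (or the relevant Dirichlet character) is nontrivial, completing the verification of Serre's hypotheses.

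Finally, I would assemble the argument: for nontrivial $\theta$ with $\phi_{b_1,\dots,b_{r_k}}$ nontrivial, $\Psi$ is a nontrivial unitarized Gr\"ossencharakter of $E$ by the Demjanenko nonvanishing above, so $L_{\mathcal A_\Q}(\theta,s)\doteq L(\Psi,s)$ is holomorphic and nonvanishing for $\Re(s)\ge1$ by Hecke; for nontrivial $\theta$ with $\phi_{b_1,\dots,b_{r_k}}$ trivial, $\theta$ is a nontrivial character of $G$ and the same conclusion holds classically; Theorem \ref{theorem: Serre} then yields equidistribution of $x_\Q$ on $\mathcal X_\Q$. The main obstacle is the bookkeeping in the second step: one must correctly track how the induced-representation $L$-function descends to a Hecke $L$-function over $E$, carefully incorporate the finite-order twist $\chi$, and verify that the $H$-invariance of $\phi_{b_1,\dots,b_{r_k}}$ is exactly what upgrades the coset-indexed relations to the full system of equations saying $(b_j)_j\in\ker(D_k)$ — the rest is a direct reprise of the proof of Theorem \ref{theorem: ST F} combined with Artin formalism.
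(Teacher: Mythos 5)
Your overall skeleton (Serre's criterion, the classification $\theta=\Ind_{\mathcal H}^{\mathcal G}(\chi\otimes\phi_{b_1,\dots,b_{r_k}})$, Hecke's nonvanishing theorem, and nontriviality via $\det(D_k)\neq 0$) matches the paper, but your key reduction does not: the paper never descends to the intermediate field $E=F^{H_{b_1,\dots,b_{r_k}}}$ and never introduces a new Hecke character there. Instead it compares $L_{\mathcal A_\Q}(\theta,s)^{n}$ with $L_{\mathcal A_F}(\phi_{b_1,\dots,b_{r_k}},s)\doteq L(\Psi,s)$, the Gr\"ossencharakter $L$-function over $F$ already controlled in Theorem \ref{theorem: ST F}, absorbing $\chi$ by extending it to a character $\tilde\chi$ of the cyclic group $G$. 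Your route hinges on a step the paper deliberately avoids, and that step has a genuine gap.

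Concretely, the object you call a Gr\"ossencharakter $\Psi$ of $E$ is not one, and is not even defined as written: the $J_{(ka_i,a_i)}$ are characters of ideals of $F$, not of $E$. The natural repair, setting $\Psi(\wp):=\chi(\Frob_\wp)\prod_i\bigl(J_{(ka_i,a_i)}(\p)/\sqrt{N\p}\bigr)^{b_i}$ for a chosen $\p\mid\wp$, is well defined thanks to the $H$-invariance of $\phi_{b_1,\dots,b_{r_k}}$, but it is in general not a Hecke character of $E$: at a prime $\wp$ of residue degree $d>1$ in $F/E$ its value is forced to be the $d$-th power of the value of any honestly descended character. For instance, take $(\ell,k)=(7,2)$ and $\phi_b$ nontrivial, so $H=W_k$ and $E=\Q(\sqrt{-7})$; if $\psi$ denotes the Hecke character of $E$ attached to the CM elliptic curve $B_k$, your function takes the value (a power of) $\psi(\wp)$ at primes split in $F/E$ but $\psi(\wp)^{3}$ at inert primes, so it agrees with no Gr\"ossencharakter of $E$, and the Euler factors of $L_{\mathcal A_E}(\chi\otimes\phi_b,s)$ differ from those of any candidate Hecke $L$-function at a positive-density set of primes --- a discrepancy that cannot be hidden in ``finitely many Euler factors'' nor in a factor harmless on $\Re(s)\geq 1$. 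This descent of Jacobi-sum characters to subfields is exactly the delicate point the paper handles in Proposition \ref{proposition: low gros}, where only the power $J_{(ka,a)}(\p)^{n_{k,f}^2/f}$ is realized as the value of a Gr\"ossencharakter of the subfield, and even then only primes of one fixed residue degree are controlled. Your nontriviality argument via the infinity type (\ref{equation: inf type}) also presupposes this unproved descent, since that formula lives over $F$. So as written the appeal to Hecke over $E$ does not go through; you would need either to prove the required descent statement and redo the Euler-factor bookkeeping at non-split primes, or to follow the paper's comparison of $L_{\mathcal A_\Q}(\theta,s)^{n}$ with the $F$-level $L$-function. (Your final case, $\phi$ trivial and $\theta$ a nontrivial Dirichlet character, is fine.)
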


\begin{proof}
Let $\theta=\Ind_{\mathcal G}^{\mathcal H}(\chi \otimes \phi_{b_1,\dots,b_{r_k}})$ be as in \S\ref{section: irred reps}. We have to show that 
$$
L_{\mathcal A_\Q}(\theta,s)=\prod_{i\geq 1, p_i\subseteq \Z}\det\left(  1-\theta(x_{p_i}) p_i^{-s}\right)^{-1}
$$
is holomorphic and nonvanishing for $\Re(s)\geq 1$. Let us first consider the case $\chi=1$. Write $n:=|H_{b_1,\dots,b_{r_k}}|$.
By the Artin formalism, we have that
$$
L_{\mathcal A_F}(\phi_{b_1,\dots,b_{r_k}},s)=L_{\mathcal A_\Q}(\Ind^{\mathcal H}_{\mathcal G}\Ind^{\mathcal G_0}_{\mathcal H}\phi_{b_1,\dots,b_{r_k}},s)
=L_{\mathcal A_\Q}(n\Ind^{\mathcal H}_{\mathcal G}\phi_{b_1,\dots,b_{r_k}},s)
=L_{\mathcal A_\Q}(\theta,s)^{n}\,.
$$
The second equality follows from (\ref{equation: def H}).
But in the proof of Theorem \ref{theorem: ST F}, we have seen that $L_{\mathcal A_F}(\phi_{b_1,\dots,b_{r_k}},s)\doteq L(\Psi,s)$ is holomorphic and nonvanishing for $\Re(s)\geq 1$, from which the desired result follows. For the general case ($\chi$ not necessarily trivial), let $\tilde\chi$ be a character of $G$ such that $\tilde\chi|_{H_{b_1,\dots,b_{r_k}}}=\chi$ (the existence of $\tilde \chi$ is guaranteed by the fact of $G$ being cyclic). Then $\theta=\Ind_{\mathcal G}^{\mathcal H}(\chi \otimes \phi_{b_1,\dots,b_{r_k}})=\tilde\chi\otimes \Ind_{\mathcal G}^{\mathcal H}( \phi_{b_1,\dots,b_{r_k}})$. The cyclicity of $G$ additionally implies that
$$
n\theta=\tilde \chi\otimes \Ind^{\mathcal G_0}_{\mathcal G}\phi_{b_1,\dots,b_{r_k}}=\Ind^{\mathcal G_0}_{\mathcal G}\phi_{b_1,\dots,b_{r_k}}\,,
$$
from which $ L(\Psi,s)\doteq L_{\mathcal A_\Q}(\theta,s)^{n}$ follows again.
\end{proof}

We have thus shown that Conjecture \ref{conjecture: ST} holds for $E=F$ and $E=\Q$. The choice $E=\Q$ in \S\ref{section: irred reps} and \S\ref{section: over Q} was made only for the purpose of simplifying the exposition; the proof of Theorem~\ref{theorem: ST Q} can be immediately generalized for an arbitrary intermediate extension $E/\Q$ of $F/\Q$.

\section{Vanishing of the determinant of a Demjanenko matrix}\label{section: degenerate primes}

For non-degenerate pairs $(\ell,k)$, one may explicitly determine the limiting distributions of the coefficients of the normalized local factors attached to $\Cc_k$ from the results of \S\ref{section: ST group}.

In \S\ref{section: explicit distributions}, we will describe an alternative method to compute these limiting distributions. The interest of this method relies on the fact that, for a degenerate pair $(\ell,k)$, there still exist some residue degrees exhibiting a ``non-degenerate behavior''. As a consequence, we will be able to compute the limiting distributions when we restrict to primes of such residue degrees. These residue degrees $f$ may be characterized by the fact that the rank of $D_{k,f}$ is maximal, where $D_{k,f}$ is a generalization of the Demjanenko matrix $D_k$.

The goal of this section is to provide the technical results for the method that will be presented in \S\ref{section: explicit distributions} (essentially Theorem \ref{theorem: det deg}, see below). To a certain extent, it is an independent section and for the reader exclusively interested in determinants of Demjanenko matrices, it should suffice to review (\ref{equation: MW}) and  Definition \ref{definition: Demjanenko matrix}, and skip everything else from the previous sections. Conversely, the reader exclusively interested in equidistribution questions concerning quotients of the Fermat curves may just look at Definitions \ref{definition: Ekf} and \ref{definition: Dkf}, assume Theorem \ref{theorem: det deg} and Proposition \ref{proposition: equi pep}, and ignore the rest of the section at a first reading. 

\begin{definition}\label{definition: Ekf}
Let $E_k$ be as in Definition \ref{definition: Demjanenko matrix}. For a divisor $f$ of $\ell-1$, let $H_f$ be the subgroup of $G$ of order $f$, and for $a\in G$ let 
$$
E_{k,f}(a):=\sum_{h\in H_f}E_k(ah)\,. 
$$
Define also 
$
W_{k,f}:=\{w\in G\,|\,E_{k,f}(a)=E_{k,f}(aw), \forall a\in G\}
$ 
and let $n_{k,f}$ denote its cardinality.
\end{definition}

We will use the following notation: for every subset $X\subseteq G$, we will denote by $[X]$ the element $\sum_{x\in X}[x]\in\Z[G]$. Observe that 
\begin{equation}\label{equation: carac}
W_{k,f}=\left\{w\in G\,|\,[H_f][M_k^{-1}]([w]-[1])=0\text{ in }\Z[G] \right\}\,,
\end{equation}
since we have the following equalities
$$
[H_f][M_k^{-1}]=\sum_{j\in G} E_{k,f}(-j^{-1})[j]\,,\quad [H_f][M_k^{-1}][w]=\sum_{j\in G} E_{k,f}(-j^{-1}w)[j]\,.
$$
\begin{definition}\label{definition: Dkf} 
The generalized $(k,f)$-Demjanenko matrix is $D_{k,f}=\left(E_{k,f}(-c^{-1}a)-\frac{f}{2}\right)_{c,a\in M_k/W_{k,f}}$. We will denote the size of $D_{k,f}$ by $r_{k,f}:=\frac{\ell-1}{2n_{k,f}}$. 
\end{definition}

\begin{remark}
Observe that $E_{k,1}=E_k$, $W_{k,1}=W_k$, $D_{k,1}=D_k$, $n_{k,1}=n_k$, $r_{k,1}=r_k$, $H_f\subseteq W_{k,f}$, and that $E_{k,f}(-a)=f-E_{k,f}(a)$. 
\end{remark}

\begin{lemma}\label{lemma: cons wfk}
A divisor $f$ of $\ell-1$ is even if and only if $W_{k,f}=G$. In this case, we have $D_{k,f}=0$.
\end{lemma}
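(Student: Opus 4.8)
The plan is to prove Lemma \ref{lemma: cons wfk} in two directions, exploiting the description of $W_{k,f}$ given in (\ref{equation: carac}) together with the basic symmetry $E_{k,f}(-a)=f-E_{k,f}(a)$ recorded in the preceding remark. First I would observe that $-1\in W_{k,f}$ forces $W_{k,f}=G$, by an easy argument: since $E_{k,f}(-a)=f-E_{k,f}(a)$, membership of $-1$ in $W_{k,f}$ means $E_{k,f}(a)=E_{k,f}(-a)=f-E_{k,f}(a)$ for all $a\in G$, i.e. $E_{k,f}\equiv f/2$ is constant; a constant function is obviously invariant under all of $G$, so $W_{k,f}=G$. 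Conversely, if $W_{k,f}=G$ then in particular $-1\in W_{k,f}$, and the same computation gives $E_{k,f}(a)=f/2$ for all $a$, so $f$ must be even (otherwise $f/2\notin\Z$ while $E_{k,f}(a)\in\Z$, a contradiction). This already reduces the statement to: \emph{$f$ is even if and only if $-1\in W_{k,f}$}.

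So the core of the proof is to show that $-1\in W_{k,f}$ precisely when $f$ is even. For the ``if'' direction, suppose $f$ is even. Then $H_f$ is the unique subgroup of $G=(\Z/\ell\Z)^*$ of order $f$, and since $G$ is cyclic of even order $\ell-1$ and $f\mid \ell-1$ is even, $H_f$ contains the unique element of order $2$, namely $-1$. Hence $-1\in H_f$, so $-H_f=H_f$, and therefore $[H_f][M_k^{-1}]([-1]-[1])=(-[H_f]+[H_f])[M_k^{-1}]\cdot$ — more carefully, $[H_f][-1]=[-H_f]=[H_f]$ in $\Z[G]$, so $[H_f][M_k^{-1}]([-1]-[1])=0$, and by (\ref{equation: carac}) this says $-1\in W_{k,f}$. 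For the ``only if'' direction, suppose $f$ is odd; I must show $-1\notin W_{k,f}$, equivalently that $E_{k,f}$ is not identically $f/2$ — which is automatic since $f/2$ is not an integer while $E_{k,f}(a)=\sum_{h\in H_f}E_k(ah)\in\Z$. (Alternatively: $E_{k,f}(a)+E_{k,f}(-a)=f$ is odd, so the two summands cannot be equal, hence $E_{k,f}(a)\neq E_{k,f}(-a)$ for every $a$, so $-1\notin W_{k,f}$.)

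Finally, once $W_{k,f}=G$ in the even case, I would conclude $D_{k,f}=0$ directly from Definition \ref{definition: Dkf}: the index set $M_k/W_{k,f}$ is a single class, so $D_{k,f}$ is the $1\times 1$ matrix with entry $E_{k,f}(-c^{-1}a)-\tfrac f2$ where $c$ and $a$ lie in that one class; but we have just shown $E_{k,f}\equiv f/2$ on all of $G$ in this case, so the entry is $0$. (One small bookkeeping point: I should note $r_{k,f}=\frac{\ell-1}{2n_{k,f}}=\frac{\ell-1}{2(\ell-1)}$ is not an integer here, reflecting that the ``matrix'' is degenerate/empty in the even case; the honest statement is simply that the entries all vanish, which is all that is needed downstream.) I do not anticipate a serious obstacle here; the only mild subtlety is making sure the quotient $M_k/W_{k,f}$ and the expression $E_{k,f}(-c^{-1}a)$ are well defined, which is exactly guaranteed by the defining property of $W_{k,f}$ in Definition \ref{definition: Ekf}, so no extra work is required.
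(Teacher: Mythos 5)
Your proof is correct and follows essentially the same route as the paper's: both directions reduce to the identity $E_{k,f}(a)+E_{k,f}(-a)=f$, with evenness of $f$ giving $-1\in H_f$ and hence $E_{k,f}\equiv f/2$ (so $W_{k,f}=G$ and all entries of $D_{k,f}$ vanish), and a parity argument ruling out $W_{k,f}=G$ when $f$ is odd. Your repackaging through the intermediate statement ``$-1\in W_{k,f}$ if and only if $W_{k,f}=G$'' is only a cosmetic reorganization of the paper's argument, which evaluates the same identity at $a=1$ via the counts $|H_f\cap M_k|$ and $|H_f\cap(-M_k)|$.
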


\begin{proof}
Since $f$ is even, then $-1\in H_f$, but this means that $[H_f][M_k^{-1}]=\frac{f}{2}[G]$, from which one implication of the lemma is clear in virtue of (\ref{equation: carac}). For the other implication first note that 
$$
|H_f\cap M_k|=\sum_{h\in H_f}E_k(h)=E_{k,f}(1),\qquad |H_f\cap -M_k|=\sum_{h\in H_f}E_k(-h)=E_{k,f}(-1)\,.
$$
If $W_{k,f}=G$, then $E_{k,f}(1)=E_{k,f}(-1)$, and thus $|H_f\cap M_k|=|H_f\cap -M_k|$. It follows that $f=|H_f\cap M_k|+|H_f\cap -M_k|$ is even.
\end{proof}

Before proceeding to prove Theorem \ref{theorem: det deg}, we need four auxilliary results: Propostions \ref{proposition: demjanenko determinant}, \ref{proposition: alaRib}, \ref{proposition: calculet brillant}, and~\ref{proposition: equi pep}. We first introduce some notation. For an abelian group $A$, let $X(A)$ denote the group of characters of $A$. For every odd divisor $f$ of $\ell-1$, let $X^-_{k,f}(G)$ (resp. $X^+_{k,f}(G)$) denote the set of odd (resp. even) characters of $G$ that are trivial on $W_{k,f}$. Write simply $X^-_k(G)$ and $X^+_k(G)$ when $f=1$. 

\begin{proposition}\label{proposition: demjanenko determinant}
 For $1\leq k\leq \ell-2$ and $f$ an odd divisor of $\ell-1$, we have 
$$\det(D_{k,f})=\left(\frac{-f}{2}\right)^{r_{k,f}}\prod_{\chi\in X^-_{k,f}(G)}\sum_{a\in M_k/W_{k,f}}\chi(a)=\left(\frac{-f}{2n_{k,f}}\right)^{r_{k,f}}\prod_{\chi\in X^-_{k,f}(G)}\sum_{a\in M_k}\chi(a)\,.$$ 
\end{proposition}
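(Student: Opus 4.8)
The plan is to compute $\det(D_{k,f})$ by recognizing $D_{k,f}$ (up to scaling and a rank-one correction) as essentially a group matrix for the quotient group $G/W_{k,f}$, and then apply Dedekind's determinant formula for abelian group matrices. First I would write $\overline{G}:=G/W_{k,f}$, which is cyclic since $G$ is, and note that $M_k/W_{k,f}$ is a well-defined subset of $\overline{G}$ of size $r_{k,f}$; by Lemma \ref{lemma: cons wfk} we are in the case $f$ odd so $-1\notin W_{k,f}$ and $\overline{G}$ has even order $2r_{k,f}$, with $M_k/W_{k,f}$ and its complement $-M_k/W_{k,f}$ partitioning $\overline{G}$ (using $E_{k,f}(-a)=f-E_{k,f}(a)$, equivalently $a\in M_k \iff -a\notin M_k$ at the level of $\overline G$). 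The entry of $D_{k,f}$ in row $c$, column $a$ is $E_{k,f}(-c^{-1}a)-\tfrac f2$, which depends only on $\overline b:=\overline{c^{-1}a}\in\overline G$; call this function $\psi(\overline b):=E_{k,f}(-\overline b)-\tfrac f2$. So $D_{k,f}$ is the submatrix, indexed by rows and columns in $M_k/W_{k,f}$, of the full $\overline G$-group matrix $\Psi:=(\psi(c^{-1}a))_{c,a\in\overline G}$.

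The key device is the classical fact (Dedekind) that for a function $\psi$ on a finite abelian group $\overline G$, the group matrix $(\psi(c^{-1}a))_{c,a}$ has eigenvalues $\widehat\psi(\chi):=\sum_{b\in\overline G}\psi(b)\chi(b)$ indexed by $\chi\in X(\overline G)$. The characters of $\overline G=G/W_{k,f}$ are exactly the characters of $G$ trivial on $W_{k,f}$. Now for the trivial character, $\widehat\psi(1)=\sum_{b}\psi(b)=\sum_b E_{k,f}(-b)-r_{k,f}f=r_{k,f}f-r_{k,f}f=0$ (the sum of $E_{k,f}$ over $\overline G$ is $r_{k,f}f$ by the same complementation identity), so $\Psi$ is singular; this is the rank-one degeneracy that in the $f=1$ case corresponds to the $+1$ appearing in Lemma \ref{lemma: image phik}. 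For an even nontrivial character $\chi\in X^+_{k,f}(G)$, $\chi(-1)=1$ so $\psi$ being "odd up to constant" forces $\widehat\psi(\chi)=0$ as well: indeed $\psi(-b)+\psi(b)=E_{k,f}(-(-b))+E_{k,f}(-b)-f=0$, hence $\sum_b\psi(b)\chi(b)=\tfrac12\sum_b(\psi(b)+\psi(-b))\chi(b)=0$ using $\chi(-b)=\chi(b)$. For an odd character $\chi\in X^-_{k,f}(G)$, $\chi(-1)=-1$, and one computes $\widehat\psi(\chi)=\sum_b E_{k,f}(-b)\chi(b)=\sum_b E_{k,f}(b)\chi(-b)=-\sum_{b\in\overline G, b\notin M_k/W_{k,f}}\chi(b)$; writing $\overline G$ as the disjoint union of $M_k/W_{k,f}$ and its negative and using $\sum_{b\in\overline G}\chi(b)=0$, this equals $-\bigl(-\sum_{a\in M_k/W_{k,f}}\chi(a)\bigr)$ up to the sign from $\chi(-1)=-1$; carefully tracking signs this gives $\widehat\psi(\chi) = \pm\sum_{a\in M_k/W_{k,f}}\chi(a)$, with a uniform sign I will pin down.

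To finish, I would pass from the full group matrix $\Psi$ to its submatrix $D_{k,f}$: since $\Psi$ has exactly $r_{k,f}$ zero eigenvalues (trivial character plus the $r_{k,f}-1$ even nontrivial ones, as $|X^+_{k,f}(G)|=r_{k,f}$) and its restriction to rows/columns in $M_k/W_{k,f}$ is $D_{k,f}$ while its restriction to $-M_k/W_{k,f}$ is also $D_{k,f}$ (again by $E_{k,f}(-b)=f-E_{k,f}(b)$) with the two off-diagonal blocks being $-D_{k,f}+\tfrac f2 U$, the block manipulation exactly as in the proof of Lemma \ref{lemma: image phik} shows $\Psi\sim\mathrm{diag}(D_{k,f}, fU)$ up to elementary operations, so the nonzero eigenvalues of $\Psi$ coincide with those of $D_{k,f}$ together with the single eigenvalue $r_{k,f}f$ of $fU$. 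Dividing the product $\prod_{\chi\ne 1}\widehat\psi(\chi)=\prod_{\chi\in X^-_{k,f}(G)}\bigl(\pm\sum_{a\in M_k/W_{k,f}}\chi(a)\bigr)$ by $r_{k,f}f$ then yields $\det(D_{k,f})$. Finally, converting $\sum_{a\in M_k/W_{k,f}}\chi(a)$ to $\tfrac1{n_{k,f}}\sum_{a\in M_k}\chi(a)$ (valid because $\chi$ is trivial on $W_{k,f}$ so each coset contributes $n_{k,f}$ times) gives the second displayed form, and collecting the constants $\pm$, $\tfrac f2$, and $\tfrac1{n_{k,f}}$ over the $r_{k,f}$ odd characters produces the factors $\bigl(\tfrac{-f}{2}\bigr)^{r_{k,f}}$ and $\bigl(\tfrac{-f}{2n_{k,f}}\bigr)^{r_{k,f}}$. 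The main obstacle I anticipate is purely bookkeeping: getting the overall sign right (the $(-1)^{r_{k,f}}$ in $(-f/2)^{r_{k,f}}$), which requires care in how $\chi(-1)=-1$ interacts with the identity $E_{k,f}(-b)=f-E_{k,f}(b)$ and with the determinant of the complement-block reduction; I would verify it against the classical Demjanenko case $f=1$ and small $\ell$.
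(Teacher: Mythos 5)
Your route is genuinely different from the paper's (which twists by a fixed odd character $\omega\in X^-_{k,f}(G)$, works with the $\{\pm1\}W_{k,f}$-invariant function $\mathcal F(a)=\sum_{h\in H_f}\omega(ha)\delta_k(ha)=\omega(a)(f-2E_{k,f}(a))$, and applies the Dedekind formula to an $r_{k,f}\times r_{k,f}$ matrix on $G/(\{\pm1\}W_{k,f})$ that becomes $D_{k,f}$ after rescaling rows and columns by values of $\omega$), but as written it has two genuine gaps. The first is that you silently treat $E_{k,f}$ as the $0/1$ indicator of the complement of $M_k$ ``at the level of $G/W_{k,f}$'', i.e.\ you do the $f=1$ computation. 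For $f>1$ this fails: $E_{k,f}$ takes values in $\{0,\dots,f\}$, and $W_{k,f}\supseteq H_f$ stabilizes the counting function $E_{k,f}$ but \emph{not} the set $M_k$, so a coset $aW_{k,f}$ typically meets both $M_k$ and $-M_k$. Hence the claimed partition of $G/W_{k,f}$ into $M_k/W_{k,f}$ and $-M_k/W_{k,f}$ is not available (the index set $M_k/W_{k,f}$ must be read as a transversal of $G/(\{\pm1\}W_{k,f})$ chosen inside $M_k$), the conversion $\sum_{a\in M_k/W_{k,f}}\chi(a)=\tfrac{1}{n_{k,f}}\sum_{a\in M_k}\chi(a)$ is not ``each coset contributes $n_{k,f}$ times'', and, most importantly, the odd-character eigenvalue is not $\pm\sum_{a\in M_k/W_{k,f}}\chi(a)$: unfolding the $H_f$-sum in $E_{k,f}$ and using $\chi|_{H_f}=1$ gives $\sum_{b\in G/W_{k,f}}\bigl(E_{k,f}(-b)-\tfrac{f}{2}\bigr)\chi(b)=\tfrac{f}{n_{k,f}}\sum_{a\in M_k}\chi(a)$, so the factor $f/n_{k,f}$ --- all of the $f$-dependence of the statement --- is exactly what your sketch drops, and nowhere do you use the defining structure of $E_{k,f}$ as a sum over $H_f$.

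The second gap is the extraction of $\det(D_{k,f})$ from the big matrix. With your shifted kernel $\psi(b)=E_{k,f}(-b)-\tfrac{f}{2}$ the block form relative to a transversal $T$ and $-T$ is $\left(\begin{smallmatrix}D_{k,f}&-D_{k,f}\\-D_{k,f}&D_{k,f}\end{smallmatrix}\right)$ (your off-diagonal block $\tfrac{f}{2}U-D_{k,f}$ belongs to the unshifted matrix of Lemma \ref{lemma: image phik}); this matrix is always singular, and ``elementary operations as in Lemma \ref{lemma: image phik}'' preserve rank and determinant but not eigenvalues, so they cannot show that its nonzero spectrum is that of $D_{k,f}$ together with an eigenvalue $r_{k,f}f$. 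Indeed your own computation $\widehat\psi(1)=0$ contradicts the extra eigenvalue you propose to divide by: the product of $\widehat\psi(\chi)$ over \emph{all} characters is $\det\Psi=0$, so dividing gives no information. This step can be repaired, e.g.\ by writing $\Psi=\left(\begin{smallmatrix}I\\-I\end{smallmatrix}\right)D_{k,f}\left(\begin{smallmatrix}I&-I\end{smallmatrix}\right)$ and using that $AB$ and $BA$ have the same nonzero spectrum, so that $\prod_{\chi\in X^-_{k,f}(G)}\widehat\psi(\chi)=\det(2D_{k,f})$ when $\det(D_{k,f})\neq0$ and both sides vanish otherwise; combined with the corrected eigenvalue above this does give the stated type of formula, but the constant and its sign then have to be tracked explicitly rather than deferred --- this bookkeeping is the delicate point of the whole proposition (it is also where the paper's own twisted computation requires the most care). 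The paper's choice of a fixed odd $\omega$ is precisely what lets it avoid both difficulties at once: it never leaves an $r_{k,f}\times r_{k,f}$ matrix, and it never has to separate zero from nonzero eigenvalues of a singular group matrix.
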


\begin{proof}
We will apply the Dedekind determinant formula (DDF)\footnote{Recall that for a finite abelian group $A$ and $\mathcal F$ a function on $A$, the DDF establishes that
$$
\det(\mathcal F(ab^{-1}))_{a,b\in A}=\prod_{\psi\in X(A)}\sum_{a\in A}\psi(a)\mathcal F(a)\,.
$$}, following the strategy considered in \cite{Haz90}, \cite{Doh94}, or \cite{SS95}. Let us write
$$
\delta_k(a):=
\begin{cases}
1 & \text{if }a\in M_k\,,\\
-1 & \text{if }a\not\in M_k\,.
\end{cases}
$$
Note that by Lemma \ref{lemma: cons wfk}, $X^-_{k,f}(G)$ is non-empty and thus there is a bijection between $X^-_{k,f}(G)$ and  $X^+_{k,f}(G)$. Choose $\omega\in X^-_{k,f}(G)$. Observe that the function $\mathcal F(a)=\sum_{h\in H_f}\omega(ha)\delta_k(ha)$, for $a\in G$,
is well-defined on $G/(\{\pm 1\}W_{k,f})$, since $\omega(a)\delta_k(a)=\omega(-a)\delta_k(-a)$ and 
$$
\mathcal F(a)=\omega(a)(f-2E_{k,f}(a))=\omega(wa)(f-2E_{k,f}(wa))=\mathcal F(wa)
$$
for every $w\in W_{k,f}$. Then we have
\begin{align*}
\prod_{\chi\in X^-_{k,f}(G)}\sum_{a\in M_k} \chi(a) &=\prod_{\chi\in X^+_{k,f}(G)}\sum_{a\in M_k} \chi(a)\cdot\omega(a) \\[4pt]
  &=\prod_{\chi\in X^+_{k,f}(G)}\sum_{a\in M_k} \chi(a)\cdot \omega(a)\delta_k(a)\\[4pt]
  &=\prod_{\psi\in X(G/(\{\pm 1\}W_{k,f}))}\sum_{a\in M_k/W_{k,f}} \frac{n_{k,f}}{f}\psi(a)\cdot \mathcal F( a)\\[4pt]
  &=\det\left(\frac{n_{k,f}}{f}\mathcal F( a b^{-1})\right)_{a,b\in M_k/W_{k,f}}\\[4pt]
   &=\left(\frac{-n_{k,f}}{f}\right)^{r_{k,f}}\det(\omega( a b^{-1})(2E_{k,f}( -a b^{-1})-f))_{a,b\in M_k/W_{k,f}}\,.\\[4pt]
\end{align*}
Multiplying the $a$-row of the matrix by $\omega(a)^{-1}$ and the $b$-column by $\omega(b)$ for every $a,b\in M_k/W_{k,f}$ cancels out the factor $\omega(a b^{-1})$ without changing the determinant.
\end{proof}

Kubota \cite[Lem. 2]{Kub65} showed that the rank of $D_k$ is the number of characters $\chi\in X_k^-(G)$ for which the sum $\sum_{a\in M_k/W_k}\chi(a)$ is nonzero.
We will now show that an analogous statement holds true when we consider $D_{k,f}$.
To this end, we will extend Ribet's proof \cite[Prop. 3.10]{Rib80} of the result of Kubota. Define the map
$$
\Phi^*_{k,f}\colon \Z[G/W_{k,f}]\rightarrow \Z[G/W_{k,f}]\,,\qquad\Phi^*_{k,f}([a])=[H_f][M_k^{-1}][a]=\sum_{c\in G/W_{k,f}}E_{k,f}(-c^{-1}a)[c]\,. 
$$
Note that $\Phi^*_{k,f}$ is well defined precisely because of the definition of $W_{k,f}$.

\begin{lemma}\label{lemma: image phikf} The rank of $\Phi_{k,f}^*(\Z[G/W_{k,f}])$ is $\rk( D_{k,f})+1$.
\end{lemma}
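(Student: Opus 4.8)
The plan is to follow the proof of Lemma~\ref{lemma: image phik} almost verbatim, with $D_k$ replaced by $D_{k,f}$ and the constant $\tfrac12$ replaced by $\tfrac f2$; the one point that genuinely needs care is the description of the index set $M_k/W_{k,f}$.

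We may assume $f$ is odd, since for $f$ even Lemma~\ref{lemma: cons wfk} gives $W_{k,f}=G$, so that $D_{k,f}=0$ and the statement is empty. First I would note that $-1\notin W_{k,f}$: otherwise $E_{k,f}(a)=E_{k,f}(-a)=f-E_{k,f}(a)$ for every $a\in G$ (using the identity $E_{k,f}(-a)=f-E_{k,f}(a)$ noted in the remark following Definition~\ref{definition: Dkf}), forcing $2E_{k,f}(a)=f$, impossible for $f$ odd. Hence multiplication by $-1$ acts freely on $G/W_{k,f}$, splitting its $2r_{k,f}$ elements into $r_{k,f}$ pairs $\{aW_{k,f},-aW_{k,f}\}$, and since $E_{k,f}(a)\neq E_{k,f}(-a)$ exactly one member of each pair satisfies $E_{k,f}(\,\cdot\,)<f/2$. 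I take $M_k/W_{k,f}$ to be this set of $r_{k,f}$ representatives; when $f=1$ it coincides with $M_k/W_k$ (as $E_{k,1}=E_k$ and $E_k(a)<1/2\iff a\in M_k$), so the notation is consistent with Definition~\ref{definition: Demjanenko matrix}, while for general $f$ it is the value of $E_{k,f}$, not membership in $M_k$, that is relevant --- note that $W_{k,f}$ preserves this value by its very definition, which is exactly what makes $D_{k,f}$ well defined in Definition~\ref{definition: Dkf}.

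Next I would write out the matrix $A=(E_{k,f}(-c^{-1}a))_{c,a\in G/W_{k,f}}$ of $\Phi^*_{k,f}$ in the basis obtained by listing first the representatives in $M_k/W_{k,f}$ and then their negatives. Reading the two diagonal blocks off the definition of $D_{k,f}$ and the two off-diagonal blocks off the identity $E_{k,f}(c^{-1}a)=f-E_{k,f}(-c^{-1}a)$ gives, with $U$ the $r_{k,f}\times r_{k,f}$ all-ones matrix,
$$
A=\begin{pmatrix} D_{k,f}+\tfrac f2 U & \tfrac f2 U-D_{k,f}\\ \tfrac f2 U-D_{k,f} & D_{k,f}+\tfrac f2 U\end{pmatrix}\sim\begin{pmatrix} D_{k,f}+\tfrac f2 U & fU\\ fU & 2fU\end{pmatrix}\sim\begin{pmatrix} D_{k,f} & fU\\ 0 & 2fU\end{pmatrix}\,,
$$
where the equivalences are the same rank-preserving operations as in Lemma~\ref{lemma: image phik}: add the first block-row to the second and the first block-column to the second, then subtract $\tfrac12$ of the second block-column from the first. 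The last matrix visibly has rank $\rk(D_{k,f})+1$, since $2fU$ has rank one (here $r_{k,f}\geq 1$ and $f\neq 0$), and this is the rank of $\Phi^*_{k,f}(\Z[G/W_{k,f}])$.

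The argument is essentially routine, and the main obstacle I would flag is the one already isolated: unlike $W_k$, the subgroup $W_{k,f}$ need not stabilise $M_k$ (for instance whenever $n_{k,f}=3f>3$), so the basis cannot be split as ``cosets inside $M_k$'' versus ``cosets inside $-M_k$''; one must instead split it according to the sign of $E_{k,f}(\,\cdot\,)-\tfrac f2$. Once this substitution is made, the block form of $A$ and the rest of the computation proceed exactly as in the case $f=1$.
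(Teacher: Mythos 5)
Your proof is correct and takes essentially the same route as the paper, whose own proof of this lemma simply states that it ``goes exactly as in Lemma~\ref{lemma: image phik}'', i.e.\ the identical block decomposition of the matrix of $\Phi^*_{k,f}$ with $\tfrac{f}{2}$ in place of $\tfrac{1}{2}$. The extra care you take with the index set $M_k/W_{k,f}$ (pairing each coset of $W_{k,f}$ with its negative and selecting representatives by the value of $E_{k,f}$, since $W_{k,f}$ need not stabilize $M_k$) addresses a genuine subtlety the paper leaves implicit, and your resolution is the natural one and does not change the argument.
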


\begin{proof}
The proof goes exactly as in Lemma \ref{lemma: image phik}.
\end{proof}

\begin{proposition}\label{proposition: alaRib}
The rank of $D_{k,f}$ is the number of characters $\chi\in X_{k,f}^-(G)$ for which the sum $\sum_{a\in M_k/W_{k,f}}\chi(a)$ is nonzero.
\end{proposition}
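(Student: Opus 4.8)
The plan is to diagonalize the map $\Phi_{k,f}^*$ after extending scalars to $\C$, exactly as Ribet does in \cite[Prop. 3.10]{Rib80}, and then read off the rank. First I would pass to the group algebra $\C[G/W_{k,f}]$ and decompose it as $\bigoplus_\chi \C_\chi$, the sum running over the characters $\chi$ of the abelian group $G/W_{k,f}$, where $\C_\chi$ is the one-dimensional $\chi$-isotypic component. Since $\Phi_{k,f}^*$ is given by multiplication by the element $[H_f][M_k^{-1}]\in\Z[G/W_{k,f}]$ (this is how it is defined), it acts on the line $\C_\chi$ by the scalar $\chi\bigl([H_f][M_k^{-1}]\bigr)=\sum_{h\in H_f}\chi(h)\cdot\sum_{j\in M_k/W_{k,f}}\chi(j^{-1})$. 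Hence the rank of $\Phi_{k,f}^*$ over $\C$ (which equals the rank over $\Z$) is the number of characters $\chi$ of $G/W_{k,f}$ for which this scalar is nonzero.

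Next I would analyze when that scalar vanishes. The factor $\sum_{h\in H_f}\chi(h)$ equals $f$ if $\chi$ is trivial on $H_f$ and $0$ otherwise; since $H_f\subseteq W_{k,f}$, every character of $G/W_{k,f}$ is automatically trivial on $H_f$, so this factor is always $f\neq 0$. Therefore the scalar on $\C_\chi$ is nonzero precisely when $\sum_{j\in M_k/W_{k,f}}\chi(j^{-1})\neq0$, equivalently (replacing $\chi$ by $\bar\chi$, which is a bijection on the character group) when $\sum_{a\in M_k/W_{k,f}}\chi(a)\neq 0$. Thus $\rk(\Phi_{k,f}^*)$ is the number of characters of $G/W_{k,f}$ — viewed as characters of $G$ trivial on $W_{k,f}$ — with $\sum_{a\in M_k/W_{k,f}}\chi(a)\neq0$.

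Then I would split this count according to the parity of $\chi$. For an even character $\chi$ trivial on $W_{k,f}$, I claim $\sum_{a\in M_k/W_{k,f}}\chi(a)\neq 0$ always: indeed the constant function $1$ and the character $\chi$ both descend to $G/(\{\pm1\}W_{k,f})$, and using the bijection $M_k/W_{k,f}\to G/(\{\pm1\}W_{k,f})$ (valid because $f$ is odd, so $-1\notin W_{k,f}$ and $M_k$ is a set of coset representatives for $\{\pm1\}W_{k,f}$ in $G$, as follows from $E_{k,f}(-a)=f-E_{k,f}(a)$ being ``genuinely'' bivalued when $f$ is odd) one gets $\sum_{a\in M_k/W_{k,f}}\chi(a)=\sum_{x\in G/(\{\pm1\}W_{k,f})}\chi(x)$, which is nonzero iff $\chi$ is trivial on $G/(\{\pm1\}W_{k,f})$, i.e. iff $\chi$ is even — and that is our standing assumption. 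So all $|X_{k,f}^+(G)|$ even characters contribute, and by Lemma \ref{lemma: cons wfk} (applicable since $f$ is odd, so $W_{k,f}\neq G$) there is a bijection between $X_{k,f}^+(G)$ and $X_{k,f}^-(G)$; thus the even characters contribute exactly $r_{k,f}$ to $\rk(\Phi_{k,f}^*)$. Subtracting, $\rk(\Phi_{k,f}^*) = r_{k,f} + \#\{\chi\in X_{k,f}^-(G) : \sum_{a\in M_k/W_{k,f}}\chi(a)\neq 0\}$, and by Lemma \ref{lemma: image phikf} the left side is $\rk(D_{k,f})+1$; since the total number of even characters is $r_{k,f}$, the trivial character accounts for the ``$+1$'' and cancels, giving the asserted formula. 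The main obstacle I anticipate is the bookkeeping in this last step — carefully matching the ``$+1$'' in Lemma \ref{lemma: image phikf} with the trivial character and confirming the $M_k/W_{k,f}\leftrightarrow G/(\{\pm1\}W_{k,f})$ bijection when $f$ is odd — everything else is the standard Dedekind/character-sum mechanism already used in Proposition \ref{proposition: demjanenko determinant}.
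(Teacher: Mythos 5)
Your overall strategy---diagonalizing $\Phi^*_{k,f}$ over $\C$ by characters and combining the count of nonzero eigenvalues with Lemma \ref{lemma: image phikf}---is exactly the (Ribet-style) argument used in the paper, but one step of your execution is wrong and it corrupts the final count. You claim that $\sum_{a\in M_k/W_{k,f}}\chi(a)\neq 0$ for \emph{every} even character $\chi\in X^+_{k,f}(G)$, arguing that this sum equals $\sum_{x\in G/(\{\pm 1\}W_{k,f})}\chi(x)$, which you say is ``nonzero iff $\chi$ is trivial on $G/(\{\pm 1\}W_{k,f})$, i.e.\ iff $\chi$ is even.'' The first half is fine: for an even $\chi$ trivial on $W_{k,f}$ the summand depends only on the class of $a$ in $G/(\{\pm 1\}W_{k,f})$ and the index set is a set of representatives of that quotient, so the sum is the full character sum over the group $G/(\{\pm 1\}W_{k,f})$. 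But such a sum vanishes unless $\chi$ is the \emph{trivial} character of that quotient, i.e.\ trivial on all of $G$---being even is not enough. (Equivalently, for even nontrivial $\chi$ one has $\sum_{a\in M_k}\chi(a)=\tfrac{1}{2}\sum_{a\in G}\chi(a)=0$.) Hence among the even characters only the trivial one contributes, not all $r_{k,f}$ of them.

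With your count one would get $\rk(\Phi^*_{k,f})=r_{k,f}+\#\{\chi\in X^-_{k,f}(G)\,:\,\sum_{a\in M_k/W_{k,f}}\chi(a)\neq 0\}$, i.e.\ $\rk(D_{k,f})=r_{k,f}-1+\#\{\cdots\}$, which is not the assertion and is in fact absurd, since it can exceed $r_{k,f}$, the size of $D_{k,f}$; your closing sentence, in which ``the trivial character accounts for the $+1$ and cancels,'' contradicts your claim that all even characters contribute and does not repair the bookkeeping. The correct count is $\rk(\Phi^*_{k,f})=1+\#\{\chi\in X^-_{k,f}(G)\,:\,\cdots\}$, and Lemma \ref{lemma: image phikf} then gives the proposition at once---this is precisely the paper's proof, which observes that $\Phi^*_{k,f}(v_\chi)=f\bigl(\sum_{a\in M_k}\chi(a)\bigr)v_\chi$ and that the only even character with nonzero sum is the trivial one. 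Two further small inaccuracies: evaluating $\chi$ on $[H_f][M_k^{-1}]$ produces the sum over all of $M_k$ (as the paper writes), not over $M_k/W_{k,f}$; and $M_k$ is not a set of coset representatives of $\{\pm 1\}W_{k,f}$ in $G$ when $n_{k,f}>1$ (it maps $n_{k,f}$-to-one onto that quotient)---what you actually need, and what is true, is that representatives may be chosen inside $M_k$.
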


\begin{proof}
Consider the basis vectors $v_\chi:=\sum_{c\in G/W_{k,f}}\chi(c)[c]$ of $\C[G/W_{k,f}]$, where $\chi$ runs over the set $X(G/W_{k,f})\simeq X_{k,f}(G)$. Observe that
$$
\Phi_{k,f}^*(v_\chi)=f\left( \sum_{a\in M_k}\chi(a)\right)v_{\chi}\,.
$$ 
And one concludes by noting that the only even character for which $\sum_{a\in M_k/W_{k,f}}\chi(a)\not =0$ is the trivial one.
\end{proof}

\begin{proposition}\label{proposition: calculet brillant}
For $\psi\in X^-_k(G)$, one has
$$
\sum_{a\in M_k/W_k}\psi(a)=\frac{1}{n_{k}}\sum_{a\in M_k}\psi(a)= \frac{B_{1,\psi}}{n_k}\left( \frac{1}{\psi(k+1)}-1-\frac{1}{\psi(k)} \right)\,.
$$
Here, $B_{1,\psi}:=\frac{1}{\ell}\sum_{a=1}^{\ell-1}\psi(a) a$ stands for the first generalized Bernoulli number\footnote{It is well known that $B_{1,\psi}\not=0$, due to the Analytic Class number Formula (see (\ref{equation: ACNF}) in the proof of Proposition \ref{proposition: formula detkf}).}.
\end{proposition}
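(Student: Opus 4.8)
The plan is to prove the stated identity in two moves: first reduce the sum over $M_k/W_k$ to a sum over $M_k$ (dividing by $n_k$), and then evaluate $\sum_{a\in M_k}\psi(a)$ using the second description of $M_k$ from (\ref{equation: MW}).

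First I would handle the passage from $M_k/W_k$ to $M_k$. Since $W_k$ stabilizes $M_k$ (by definition) and, crucially, $\psi$ is a character trivial on $W_k$ (it lies in $X^-_k(G)$, which consists of characters trivial on $W_k$), we have $\psi(aw)=\psi(a)$ for every $w\in W_k$; grouping the $n_k$ elements of each coset $aW_k\subseteq M_k$ together gives $\sum_{a\in M_k}\psi(a)=n_k\sum_{a\in M_k/W_k}\psi(a)$, which is the first equality. (This is exactly the observation already used in the proof of Proposition \ref{proposition: demjanenko determinant}.)

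For the second equality, I would use $M_k=\{j\in G\,|\, \langle j\rangle < \langle (k+1)j\rangle\}$ from (\ref{equation: MW}). The standard trick is to write the characteristic function of $M_k$ via the fractional-part/sawtooth function: for $j\in G$, $j\in M_k$ precisely when $\langle j\rangle/\ell + \langle -(k+1)j\rangle/\ell = 1$ rather than $2$ (note neither term is $0$ since $j,(k+1)j\in G$), i.e. $j\in M_k \iff \lfloor \langle j\rangle/\ell\rfloor + \lfloor\langle kj\rangle/\ell + \langle j\rangle/\ell\rfloor$ type of indicator. Concretely, using $\langle -m\rangle = \ell - \langle m\rangle$, one has the indicator of $M_k$ equal to $\tfrac{1}{\ell}\big(\langle -(k+1)j\rangle - \langle -kj\rangle - \langle -j\rangle\big) + 1$ for $j\in G$ (both sides lie in $\{0,1\}$: the three sawtooth terms, being in $(0,\ell)$, sum to $\ell$ or $2\ell$). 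Therefore
$$
\sum_{a\in M_k}\psi(a)=\sum_{a\in G}\psi(a)\left(1+\frac{1}{\ell}\big(\langle -(k+1)a\rangle - \langle -ka\rangle - \langle -a\rangle\big)\right)=\frac{1}{\ell}\sum_{a\in G}\psi(a)\big(\langle -(k+1)a\rangle - \langle -ka\rangle - \langle -a\rangle\big)\,,
$$
where the constant term $\sum_{a\in G}\psi(a)$ vanishes because $\psi$ is nontrivial (it is odd). Now in each of the three pieces I substitute $a\mapsto a/(k+1)$, $a\mapsto a/k$, $a\mapsto a$ respectively to rewrite $\langle -(k+1)a\rangle$, $\langle -ka\rangle$, $\langle -a\rangle$ all as $\langle -a\rangle$, at the cost of replacing $\psi(a)$ by $\psi(a/(k+1))=\psi(a)/\psi(k+1)$, etc. This yields
$$
\sum_{a\in M_k}\psi(a)=\left(\frac{1}{\psi(k+1)}-\frac{1}{\psi(k)}-1\right)\cdot\frac{1}{\ell}\sum_{a\in G}\psi(a)\langle -a\rangle = \left(\frac{1}{\psi(k+1)}-1-\frac{1}{\psi(k)}\right) B_{1,\psi}\,,
$$
using that $\tfrac1\ell\sum_a \psi(a)\langle -a\rangle = \psi(-1)B_{1,\psi} = -B_{1,\psi}$ for odd $\psi$ — or, more cleanly, noting $\langle -a\rangle = \ell - \langle a\rangle$ and $\sum\psi(a)=0$ gives $\tfrac1\ell\sum_a\psi(a)\langle -a\rangle = -B_{1,\psi}$, and the sign is absorbed into the definition; I would check the sign bookkeeping carefully since that is the one place a factor of $\psi(-1)=-1$ can slip in.

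The main obstacle is precisely this sign/normalization bookkeeping: getting the indicator function of $M_k$ written correctly in terms of the $\langle\,\cdot\,\rangle$ symbol (including verifying it genuinely takes values in $\{0,1\}$ on $G$), and then tracking the $\psi(-1)=-1$ factor through the change of variables so that the final three-term bracket comes out as $\tfrac{1}{\psi(k+1)}-1-\tfrac{1}{\psi(k)}$ and not its negative or some permuted variant. Everything else is a routine reindexing of finite sums.
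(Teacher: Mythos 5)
Your proposal is correct in substance and arrives at the right formula, but by a route that differs from the paper's own argument. The paper first quotes Greenberg's group-ring identity $\sum_{a\in M_k}[a^{-1}]=\frac{1}{\ell}([1]+[k]-[1+k])\sum_{a\in G}\langle a\rangle[-a^{-1}]$ (derived from Stickelberger) and evaluates it at $\psi^{-1}$ to get (\ref{equation: Gre}); it then gives an alternative, Leopoldt-style proof that never writes down an indicator function: it computes $\ell B_{1,\psi}$ in two ways, once using $\langle (k+1)a\rangle=\langle a\rangle+\langle ka\rangle$ for $a\in M_k$ together with the decomposition $G=(k+1)M_k\sqcup\bigl(-(k+1)M_k\bigr)$, and once using $G=M_k\sqcup(-M_k)$, and then subtracts $\psi(k+1)$ times the second expression from the first. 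Your argument is instead a direct, self-contained proof of (\ref{equation: Gre}): the sawtooth identity you invoke is exactly the coefficientwise content of Greenberg's identity, so you are in effect reproving the Stickelberger-derived input elementarily rather than citing it. The reduction from $M_k/W_k$ to $M_k$ via triviality of $\psi\in X^-_k(G)$ on $W_k$ is the same in both. What your route buys is independence from the Greenberg/Stickelberger reference; what the paper's alternative proof buys is avoiding the indicator bookkeeping altogether.

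One concrete slip, at exactly the spot you flagged: the expression $1+\frac{1}{\ell}\bigl(\langle -(k+1)j\rangle-\langle -kj\rangle-\langle -j\rangle\bigr)$ is the indicator of $-M_k$, not of $M_k$ (it equals $1$ precisely when $\langle -kj\rangle+\langle -j\rangle<\ell$, i.e.\ when $-j\in M_k$). Consequently, taken literally together with $\frac{1}{\ell}\sum_{a}\psi(a)\langle -a\rangle=\psi(-1)B_{1,\psi}=-B_{1,\psi}$, your chain produces the negative of your final display; the two sign errors cancel, and the stated conclusion is the correct one. The clean fix is to use $\frac{1}{\ell}\bigl(\langle -j\rangle+\langle -kj\rangle-\langle -(k+1)j\rangle\bigr)$, equivalently $1+\frac{1}{\ell}\bigl(\langle (k+1)j\rangle-\langle kj\rangle-\langle j\rangle\bigr)$, as the indicator of $M_k$; with the positive-argument version the change of variables gives $\frac{1}{\ell}\sum_a\psi(a)\langle (k+1)a\rangle=B_{1,\psi}/\psi(k+1)$, and similarly for the other two terms, so the formula drops out with no $\psi(-1)$ factor to track.
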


\begin{proof}
From a theorem of Stickelberger, Greenberg \cite{Gre80} derived the equality
\begin{equation}\label{equation: stick}
\sum_{a\in M_k}[a^{-1}]=\frac{1}{\ell}([1]+[k]-[1+k])\sum_{a\in G} \langle a\rangle [-a^{-1}]\,,
\end{equation}
of elements in $\Z[G]$. By evaluating it at an odd character $\psi^{-1}$ of $G$, he obtained
\begin{equation}\label{equation: Gre}
\sum_{a\in M_k}\psi(a)=B_{1,\psi}\left( \frac{1}{\psi(k+1)}-1-\frac{1}{\psi(k)} \right)\,,
\end{equation}
from which the statement of the proposition follows immediately. Nevertheless, we would like to present an alternative proof of (\ref{equation: Gre}) by generalizing the method used by Leopold \cite{Le62} to deal with the case $k=1$. We will write the nonzero number $\ell B_{1,\psi}$ in the two different following ways. First, using that $\langle (k+1)a\rangle=\langle a\rangle +\langle ak\rangle$ for every $a\in M_k$, we obtain 
\begin{align}\label{equation: first way}
\ell B_{1,\psi}&= \sum_{a\in M_k}\psi((k+1)a)\langle (k+1)a\rangle + \sum_{a\in M_k}\psi(-(k+1)a)\langle -(k+1)a\rangle \\[4pt]
&=2\sum_{a\in M_k}\psi((k+1)a)\langle (k+1)a\rangle-\psi(k+1)\sum_{a\in M_k}\psi(a)\ell\\[4pt]
&=2\psi(k+1)\left( \sum_{a\in M_k} \psi(a)\langle a\rangle + \sum_{a\in M_k}\psi (a)\langle ka\rangle\right)-\psi(k+1)\sum_{a\in M_k}\psi(a)\ell\,.
\end{align}
Secondly, we have
\begin{align}\label{equation: second way}
\ell B_{1,\psi}&= \sum_{a\in M_k}\psi(a)\langle a\rangle - \sum_{a\in M_k}\psi(a)(\ell -\langle a\rangle )=2\sum_{a\in M_k}\psi(a)\langle a\rangle -\sum_{a\in M_k}\psi(a)\ell\,.
\end{align}
Subtracting $\psi(k+1)$ times equation (\ref{equation: second way}) from equation (\ref{equation: first way}), we obtain 
\begin{align*}
(1-\psi(k+1))\ell B_{1,\psi}&=2\psi(k+1)\sum_{a\in M_k}\psi(a)\langle a k\rangle \\[4pt]
&=\frac{2\psi(k+1)}{\psi(k)}\sum_{a\in M_k}\psi(ak)\langle a k\rangle\\[4pt]
&=\frac{\psi(k+1)}{\psi(k)}\left(\ell B_{1,\psi}+\psi(k)\sum_{a\in M_k}\psi(a)\ell \right)\\[4pt]
&=\frac{\psi(k+1)}{\psi(k)}\ell B_{1,\psi}+\psi(k+1)\ell \sum_{a\in M_k}\psi(a)\,.
\end{align*}
This yields
$$
\sum_{a\in M_k}\psi(a)= B_{1,\psi}\left( \frac{1}{\psi(k+1)}-1-\frac{1}{\psi(k)}\right)\,. 
$$
\end{proof}

\begin{proposition}\label{proposition: equi pep}
For $\ell>3$, $1\leq k\leq \ell-2$, and every odd divisor $f$ of $\ell-1$, the following statements are equivalent:
\begin{enumerate}[i)]
\item $\sum_{a\in M_k}\psi_0(a)^f=0$, where $\psi_0$ denotes a generator of $X(G)$;
\item $
k^{2f}+k^f+1\equiv 0\,(\bmod \ell)$ and $(k+1)^fk^f\equiv -1(\bmod \ell)\,;
$
\item $H_f\subsetneq W_{k,f}$. 
\end{enumerate}
In this case, $k\in W_{k,f}$, $W_{k,f}$ is generated by $H_f$ and $k$, and $|W_{k,f}|=3f$.
\end{proposition}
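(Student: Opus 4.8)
The plan is to prove the cycle of implications i) $\Rightarrow$ ii) $\Rightarrow$ iii) $\Rightarrow$ i), and then extract the final structural assertions. For i) $\Rightarrow$ ii), I would start from Proposition \ref{proposition: calculet brillant} applied to the odd character $\psi:=\psi_0^f$ (noting that since $f$ is odd, $\psi_0^f$ is again a generator of $X(G)$, in particular odd and of exact order $\ell-1$). Since $B_{1,\psi}\neq 0$, the vanishing $\sum_{a\in M_k}\psi(a)=0$ forces
$$
\frac{1}{\psi(k+1)}-1-\frac{1}{\psi(k)}=0\,,
$$
i.e. $\psi(k)-\psi(k)\psi(k+1)-\psi(k+1)=0$ in $\C$. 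Because $\psi=\psi_0^f$ is a faithful character on the quotient $G/H_f$ — or more precisely, because I can reinterpret this as an identity among roots of unity — I would translate this multiplicative relation into a congruence mod $\ell$. The clean way: the three quantities $\psi(k)$, $\psi(k+1)$, $\psi(k(k+1))$ are roots of unity, and a relation $\psi(k) = \psi(k+1)(1+\psi(k))$ together with its "conjugate/inverse" counterpart pins down $\psi(k)$ and $\psi(k+1)$ as primitive cube roots of unity (this is the same elementary manipulation used in the proof of Lemma \ref{lemma: card W}: from $z - zw - w = 0$ one derives $z^2+z+1 = 0$ type relations after using $|z|=|w|=1$). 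Unwinding "$\psi_0^f(x)$ is a primitive cube root of unity'' back to a statement about $x \in G$ gives exactly $k^{2f}+k^f+1\equiv 0$ and $(k+1)^f k^f \equiv -1 \pmod \ell$.

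For ii) $\Rightarrow$ iii), I would use the characterization (\ref{equation: carac}):
$$
W_{k,f}=\bigl\{w\in G\,\bigm|\,[H_f][M_k^{-1}]([w]-[1])=0\text{ in }\Z[G]\bigr\}\,.
$$
I want to show $k\in W_{k,f}$, which combined with $k\notin H_f$ (this needs checking — see the obstacle paragraph) gives $H_f\subsetneq W_{k,f}$. The element-wise statement $[H_f][M_k^{-1}][k]=[H_f][M_k^{-1}]$ should follow from the analogue of the computation at the end of the proof of Lemma \ref{lemma: card W}: there, when $k$ was a primitive cube root, one checked directly that $j\in M_k\Rightarrow kj\in M_k$. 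Here the hypothesis ii) is exactly the "$f$-averaged" version of that, so I expect $[H_f][M_k^{-1}]$ to be invariant under multiplication by $k$ by the same Stickelberger-type bookkeeping — alternatively, evaluate $[H_f][M_k^{-1}]([k]-[1])$ against every character $\chi$ of $G$: on even $\chi$ it vanishes trivially (as $[M_k^{-1}]$ pairs to give half of $|M_k|$ up to the trivial character contribution), and on odd $\chi$ one gets $B_{1,\chi}(\tfrac1{\chi(k+1)}-1-\tfrac1{\chi(k)})(\chi(k)-1)\cdot(\text{factor from }H_f)$, which vanishes whenever $\chi$ is trivial on $H_f$ precisely because ii) forces $\chi(k)$ to be a cube root of unity times something — this is the delicate case analysis. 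For iii) $\Rightarrow$ i): if $H_f\subsetneq W_{k,f}$ then $X_{k,f}^-(G)$ is a proper subgroup of the odd characters trivial on $H_f$; in particular $\psi_0^f$, which generates the full character group of $G/H_f$ and is odd, is \emph{not} trivial on $W_{k,f}$. But I actually want the contrapositive direction packaged differently: I'd rather argue that iii) implies, via Proposition \ref{proposition: alaRib} and Proposition \ref{proposition: demjanenko determinant}, that some odd character sum vanishes, and then track through Proposition \ref{proposition: calculet brillant} (applied to a suitable power of $\psi_0$) that this is equivalent to $\sum_{a\in M_k}\psi_0^f(a)=0$. Concretely: $\psi_0^f$ restricted to $W_{k,f}$ is nontrivial (since it's faithful on $G/H_f$ and $W_{k,f}\supsetneq H_f$), so $\psi_0^f\notin X_{k,f}^-(G)$; but then consider instead that the index $[W_{k,f}:H_f]=3$ claim must be extracted, and a generator-of-$X(G/W_{k,f})$ character $\eta$ satisfies $\eta=\psi_0^{3f}$ up to the ambiguity, and I'd relate $\sum_{a\in M_k}\psi_0^f(a)$ to the structure directly.

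For the final clause, once ii) holds: $k\in W_{k,f}$ was shown above. The order of $k$ modulo $H_f$ divides $3$ (from $k^{2f}+k^f+1\equiv 0$, i.e. $k^{3f}\equiv 1$ but $k^f\not\equiv 1$ — here I need $k^f\not\equiv 1 \pmod\ell$, equivalently $k\notin H_f$), so it equals $3$, giving $|W_{k,f}|\geq 3f$. For the reverse inequality $|W_{k,f}|\leq 3f$, I would invoke Lemma \ref{lemma: card W} after a twist: pick $t\in G$ with appropriate $\langle tr\rangle$ so that Remark \ref{remark: equality H} reduces to the $k$ being a cube root situation, OR argue directly that $W_{k,f}/H_f \hookrightarrow W_{k',1}$ for the curve index $k':=\langle k^f\rangle$-type reduction, whose size is $\leq 3$ by Lemma \ref{lemma: card W}; this shows $W_{k,f}$ is generated by $H_f$ and $k$ and has order exactly $3f$.

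\medskip
\textbf{Expected main obstacle.} The genuinely delicate point is the equivalence ii) $\Leftrightarrow$ iii) — specifically, showing that the single numerical congruence ii) is \emph{equivalent} to $k$ lying in $W_{k,f}$ and properly enlarging $H_f$, and in particular verifying $k\notin H_f$ under hypothesis ii) (if $f$ is chosen so that $H_f$ already contains a cube root of unity one must be careful that $k$ is not accidentally in $H_f$, which would break the "proper" containment; this is presumably why ii) is stated with the sharp congruences rather than just $k^{3f}\equiv 1$). Controlling exactly when the relevant odd character sums from Proposition \ref{proposition: calculet brillant} vanish, across all characters trivial on $H_f$ rather than a single one, and bootstrapping from "one generator vanishes'' to "the whole group $W_{k,f}$ has the right size'', is where the real work lies; the rest is Dedekind-determinant and Stickelberger bookkeeping already set up in the preceding propositions.
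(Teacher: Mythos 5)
Your overall skeleton coincides with the paper's: i)\,$\Rightarrow$\,ii) via Proposition \ref{proposition: calculet brillant} and $B_{1,\psi}\neq 0$ plus the root-of-unity argument (note only that $\psi_0^f$ is \emph{not} a generator of $X(G)$ for $f>1$; what you actually use, and what suffices, is that it is odd and faithful on $G/H_f$), and ii)\,$\Rightarrow$\,iii) via the characterization (\ref{equation: carac}). Your character-by-character verification of $[H_f][M_k^{-1}]([k]-[1])=0$ is in fact a workable substitute for the paper's Stickelberger manipulation: the only characters not killing this element are the odd powers $\psi_0^{mf}$, and under ii) one has $\psi_0^f(k)=\omega$ a primitive cube root of unity and $\psi_0^f(k+1)=-\omega^{-1}$, so for $3\nmid m$ the bracket $\frac{1}{\chi(k+1)}-1-\frac{1}{\chi(k)}$ equals $-(1+\omega^{\pm m}+\omega^{\pm 2m})=0$, while for $3\mid m$ the factor $\chi(k)-1$ vanishes; so the ``delicate case analysis'' you flag does go through. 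Two of your worries are non-issues: $k\notin H_f$ is immediate from ii), since $k^f\equiv 1$ would force $3\equiv 0\pmod \ell$; and iii)\,$\Rightarrow$\,i) needs none of the detour through Propositions \ref{proposition: alaRib} and \ref{proposition: demjanenko determinant} that you sketch but never complete --- take $w\in W_{k,f}\setminus H_f$, evaluate the defining identity $[H_f][M_k^{-1}][w]=[H_f][M_k^{-1}]$ at $\psi_0^{-f}$, and since $\psi_0^f(w)\neq 1$ conclude $\sum_{a\in M_k}\psi_0(a)^f=0$. This is exactly the paper's two-line argument; your paragraph circles it and then drifts.

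The genuine gap is the upper bound $|W_{k,f}|\leq 3f$. Neither of your two suggestions is substantiated: $W_{k,f}/H_f$ is the stabilizer of a function on $G/H_f$, which is not a group of the form $(\Z/\ell'\Z)^*$, and you define no map to any $W_{k'}$, so Lemma \ref{lemma: card W} cannot be invoked, with or without a twist via Remark \ref{remark: equality H}. The paper closes this step by applying the already-proved equivalence to the divisor $3f$ (which does divide $\ell-1$, because $\ord(k)\mid 3f$, $\ord(k)\nmid f$ forces $v_3(f)+1\leq v_3(\ell-1)$): since $k^{6f}+k^{3f}+1\equiv 3\not\equiv 0\pmod\ell$, condition ii) fails for $3f$, hence $W_{k,3f}=H_{3f}$; and since $H_{3f}$ is a union of $H_f$-cosets one has $W_{k,f}\subseteq W_{k,3f}$, so $H_f\subsetneq W_{k,f}\subseteq H_{3f}$ forces $W_{k,f}=H_{3f}$, of order $3f$ and generated by $H_f$ and $k$. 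Alternatively, staying inside your own character framework: under ii), $\psi_0^{3f}(k)=1$ and $\psi_0^{3f}(k+1)=-1$, so the bracket equals $-3$ and $\sum_{a\in M_k}\psi_0^{3f}(a)\neq 0$; evaluating the invariance defining $W_{k,f}$ at $\psi_0^{3f}$ then gives $\psi_0^{3f}(w)=1$ for every $w\in W_{k,f}$, i.e. $W_{k,f}\subseteq H_{3f}$. Without an argument of this kind your proposal does not establish $|W_{k,f}|=3f$, which is precisely what Theorem \ref{theorem: det deg} and Proposition \ref{proposition: char degenerate res} later rely on.
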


\begin{proof}
Assume that $i)$ holds. By (\ref{equation: Gre}), we have that 
\begin{equation}\label{equation: root unity rel}
\frac{1}{\psi_0^f(k+1)}=1+\frac{1}{\psi_0^f(k)} \,.
\end{equation}
But it is a trivial fact that if  $\omega$ is a root of unity, then $\omega+1$ is a root of unity if and only if $\omega$ is a primitive cubic root of unity. Thus, taking $\omega=1/\psi_0^f(k)$, equation (\ref{equation: root unity rel}) is equivalent to the assertion that $\psi_0^f(k)$ is a primitive cubic root of unity and $\psi_0^f(k+1)\psi_0^f\left(-k\right)=1$. The injectivity of $\psi_0$ implies $ii)$. 

Assume now $ii)$. Note that $k,k^2\not \in H_f$ (otherwise $k^{2f}+k^f+1\equiv 3 \pmod \ell$), while $k^3\in H_f$ (since $k^{3f}-1\equiv (k^{f}-1)(k^{2f}+k^f+1)\pmod \ell$). We will show that $k\in W_{k,f}$, equivalently by~(\ref{equation: stick}) that  
\begin{equation}\label{equation: igual1}
\ell[H_f][M_k^{-1}]=[H_f]([1]+[k]-[k+1])\left( \sum_{a\in G}\langle a\rangle [-a^{-1}]\right)
\end{equation}
is invariant by multiplication by $[k]$.
Observe that for every $b\in G$, we have
$$
[H_f]([b]+[-b])\left( \sum_{a\in G}\langle a\rangle [-a^{-1}]\right)=[H_f]\left(\ell\sum_{a\in G}[a]\right)= f\ell\sum_{a\in G}[a]\,.
$$
Thus, taking $b=k+1$, the right hand side of (\ref{equation: igual1}) is equal to
\begin{equation}\label{equation: igual2}
[H_f]([1]+[k]+[-k-1])\left( \sum_{a\in G}\langle a\rangle [-a^{-1}]\right)-f\ell\sum_{a\in G}[a]\,.
\end{equation}
Observe that for every $a,b\in G$, we have $
[a][H_f]=[b][H_f]\Leftrightarrow ab^{-1}\in H_f\,.
$
We claim that $k^2(-k-1)^{-1}\in H_f$. Indeed, first note that $(k+1)^{3f}\equiv -1 \pmod \ell$, and then observe that
$$
(k^2(-k-1)^{-1})^f\equiv k^{2f}(k+1)^{2f}\equiv k^{3f}\equiv 1\pmod \ell\,.
$$
Thus (\ref{equation: igual2}) is equal to
$$
[H_f]([1]+[k]+[k^2])\left( \sum_{a\in G}\langle a \rangle[-a^{-1}]\right)-f\ell\sum_{a\in G}[a]
$$
for which it is clear that each of its two terms are invariant by multiplication by $[k]$.  

Finally, assume $iii)$. For $w\in W_{k,f}$, one has by definition $[M_k^{-1}][H_f][w]=[M_k^{-1}][H_f]$. Evaluating this equality at $\psi_0^{-f}$, we obtain that
$$
\sum_{a\in M_k}\psi_0(a)^f=\left( \sum_{a\in M_k}\psi_0(a)^f\right)\psi_0(w^{-1})^f\,.
$$
Hence, if $w\not \in H_f$, then $\psi_0(w^{-1})^{f}\not = 1$, and $\sum_{a\in M_k}\psi_0(a)^f=0$.

We still want to see that $|W_{k,f}|=3f$ for an $f$ as in the statement. Then $3f$ does not satisfy $ii)$, and thus by $iii)$ we have $W_{k,3f}=H_{3f}$. From the inclusions
$$
H_f\subsetneq W_{k,f}\subseteq W_{k,3f}=H_{3f}\,,
$$
we obtain that $W_{k,f}=H_{3f}$. 
\end{proof}

Let $\mathcal F_0$ denote the set of (odd) divisors $f$ of $\ell-1$ such that $W_{k,f}=H_{3f}$ (or equivalently the set of odd divisors $f$ such that any of the conditions $i)$, $ii)$, or $iii)$ of Proposition \ref{proposition: equi pep} hold).

\begin{theorem}\label{theorem: det deg}
For $\ell>3$ and $1\leq k\leq \ell-2$, the following conditions are equivalent:
\begin{enumerate}[i)]
\item $\det(D_k)=0$;
\item $k$ is not a primitive cubic root of unity modulo $\ell$ and $\mathcal F_0$ is non-empty;
\item If $v_3$ denotes the $3$-adic valuation and $\ord$ denotes the order in $G$:
\begin{enumerate}[a)]
\item $k$ is not a primitive cubic root of unity modulo $\ell$;
\item $\ord(-k^2-k)$ and $\ord(k)$ are odd;
\item $v_3( \ord(k)) > v_3(\ord(-k^2-k))$.
\end{enumerate}
\end{enumerate}
In this case, $\mathcal F_0$ is exactly the set of odd divisors of $\ell-1$ that are multiples of $N_k/3$ but not of $N_k$, where $N_k:=\Lcm (\ord(-k^2-k),\ord(k))$, and thus $\rk(D_k)= \frac{\ell-1}{2}\left(1-\frac{2}{N_k}\right)$. 
\end{theorem}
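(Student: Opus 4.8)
The plan is to reduce everything to the product formula of Proposition \ref{proposition: demjanenko determinant}, $\det(D_k)=\big(\tfrac{-1}{2n_k}\big)^{r_k}\prod_{\chi\in X^-_k(G)}\sum_{a\in M_k}\chi(a)$, so that $(i)$ becomes the existence of a $\chi\in X^-_k(G)$ with $\sum_{a\in M_k}\chi(a)=0$. First I would dispose of the case in which $k$ is a primitive cubic root of unity modulo $\ell$: then $W_k=\{1,k,-k-1\}$, so every $\chi\in X^-_k(G)$ has $\chi(k)=1$ and $\chi(k+1)=\chi(-1)=-1$, and Proposition \ref{proposition: calculet brillant} gives $\sum_{a\in M_k}\chi(a)=-3B_{1,\chi}\neq 0$; hence $\det(D_k)\neq0$ and both $(i)$ and $(ii)$ are false. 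So from now on $k$ is not a primitive cubic root of unity, whence $W_k=\{1\}$ by Lemma \ref{lemma: card W} and $X^-_k(G)$ consists of all odd characters of $G$.

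The core step is to prove that there exists an odd $\chi$ with $\sum_{a\in M_k}\chi(a)=0$ if and only if $\mathcal F_0\neq\emptyset$, which gives $(i)\Leftrightarrow(ii)$. By Proposition \ref{proposition: calculet brillant}, the vanishing $\sum_{a\in M_k}\chi(a)=0$ is equivalent to $1/\chi(k+1)=1+1/\chi(k)$, and since $1+\omega$ is a root of unity (for $\omega$ a root of unity) precisely when $\omega$ is a primitive cubic root of unity, this is equivalent to $\chi(k)$ being a primitive cubic root of unity together with $\chi\big(k(k+1)\big)=-1$. For the ``if'' direction, if $f\in\mathcal F_0$ then $\psi_0^f$ is odd (since $f$ is odd) and by the equivalence i)$\Leftrightarrow$ii) of Proposition \ref{proposition: equi pep} it satisfies exactly these two conditions. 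For the ``only if'' direction, given such an odd $\chi$ I would let $H_d:=\ker\chi$; since $\chi(-1)=-1$ we have $-1\notin H_d$, so $d$ is an odd divisor of $\ell-1$; writing $\chi=(\psi_0^d)^t$ with $t$ coprime to $\ord(\psi_0^d)$ (possible because $\chi$ is a faithful character of the cyclic group $G/H_d$) and using that $\ord(\psi_0^d(x))\mid\ord(\psi_0^d)$ for every $x\in G$, raising to the $t$-th power leaves the orders of $\psi_0^d(k)$ and $\psi_0^d(k(k+1))$ unchanged, so $\psi_0^d$ satisfies the same two conditions and Proposition \ref{proposition: equi pep} gives $d\in\mathcal F_0$. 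A byproduct of this analysis is that, for odd $\chi$, the sum $\sum_{a\in M_k}\chi(a)$ vanishes precisely when $|\ker\chi|\in\mathcal F_0$.

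To get $(ii)\Leftrightarrow(iii)$ and the explicit description of $\mathcal F_0$, I would translate membership in $\mathcal F_0$ into divisibility conditions. Using $(k+1)^fk^f=(k(k+1))^f=(-1)^f(-k^2-k)^f=-(-k^2-k)^f$ for $f$ odd, Proposition \ref{proposition: equi pep} ii) says that an odd divisor $f$ of $\ell-1$ lies in $\mathcal F_0$ iff $\ord(k^f)=3$ and $\ord(-k^2-k)\mid f$, i.e. iff $\gcd(f,\ord(k))=\ord(k)/3$ (which forces $3\mid\ord(k)$) and $\ord(-k^2-k)\mid f$. Comparing $3$-adic valuations, such an $f$ exists exactly when $v_3(\ord(k))>v_3(\ord(-k^2-k))$ (condition (c), which also forces $3\mid\ord(k)$); comparing $2$-adic valuations, it may be chosen odd exactly when $N_k=\Lcm(\ord(-k^2-k),\ord(k))$ is odd, i.e. both orders are odd (condition (b)); and when (b) and (c) hold, $f=N_k/3$ works. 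Reading off these valuation constraints, $f\in\mathcal F_0$ iff $f$ is an odd divisor of $\ell-1$ which is a multiple of $N_k/3$ but not of $N_k$. Finally, for the rank, Proposition \ref{proposition: alaRib} gives $\rk(D_k)=\#\{\chi\in X^-_k(G):\sum_{a\in M_k}\chi(a)\neq0\}=\tfrac{\ell-1}{2}-\sum_{f\in\mathcal F_0}\varphi\big(\tfrac{\ell-1}{f}\big)$, using that $G$ has exactly $\varphi((\ell-1)/f)$ characters with kernel $H_f$, all odd when $f$ is odd; writing $\ell-1=2^sm'$ with $m'$ odd, setting $m'':=3m'/N_k$, and substituting $f=(N_k/3)e$ (so $e$ runs over the divisors of $m''$ prime to $3$, and $3\mid m''$ by (c)) turns this sum into $\varphi(2^s)\,(m''-m''/3)=2^sm''/3=(\ell-1)/N_k$ by the identity $\sum_{d\mid n}\varphi(n/d)=n$, whence $\rk(D_k)=\tfrac{\ell-1}{2}\big(1-\tfrac{2}{N_k}\big)$.

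The step I expect to be the main obstacle is the ``only if'' half of the claim $\mathcal F_0\neq\emptyset$ in the second paragraph, i.e. showing that an arbitrary odd character with vanishing sum can be replaced by a canonical one $\psi_0^d$ with $d$ an odd divisor of $\ell-1$; this is what connects the analytic condition $\det(D_k)=0$ with the purely multiplicative description of $\mathcal F_0$. The remaining verifications are $p$-adic valuation bookkeeping and the elementary divisor-sum identity for $\varphi$.
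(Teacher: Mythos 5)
Your proposal is correct and follows essentially the same route as the paper's proof: the determinant product formula of Proposition \ref{proposition: demjanenko determinant}, the character-sum evaluation of Proposition \ref{proposition: calculet brillant}, the equivalences of Proposition \ref{proposition: equi pep}, and the rank criterion of Proposition \ref{proposition: alaRib}, followed by the same valuation bookkeeping for $N_k$. The only differences are presentational: you make explicit the reduction from an arbitrary odd character with vanishing sum to the canonical character $\psi_0^d$ with $d=|\ker\chi|$ (a step the paper leaves implicit), treat the case of $k$ a primitive cubic root directly from the formula, and count the vanishing characters as $\sum_{f\in\mathcal F_0}\varphi\bigl((\ell-1)/f\bigr)$ rather than via the paper's bijection with residues coprime to $6$.
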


\begin{proof}
If $\det(D_k)=0$, then by Proposition \ref{proposition: demjanenko determinant}, there exists an odd divisor $f$ of $\ell-1$ such that $i)$ of Proposition \ref{proposition: equi pep} holds. Moreover, $k$ can not be a primitive cubic root of unity, since then $ii)$ of Proposition \ref{proposition: equi pep} would not hold.

If $k$ is not a primitive cubic root of unity, then $n_k=1$ by Lemma \ref{lemma: card W}. Then, if $\mathcal F_0$ is non-empty, $i)$ of Proposition \ref{proposition: equi pep} implies that $\det(D_k)=0$ by Proposition \ref{proposition: demjanenko determinant}.  

Note that the two equations of $ii)$ of Proposition \ref{proposition: equi pep} can be replaced by the following three:
\begin{equation}\label{equation: replace}
k^f\not \equiv 1\pmod \ell,\qquad k^{3f}\equiv 1\pmod \ell,\qquad (-k^2-k)^f\equiv 1\pmod \ell\,.
\end{equation} 
Clearly, there exists an odd divisor $f$ of $\ell-1$ satisfying (\ref{equation: replace}) if and only if $a)$, $b)$, and $c)$ hold. Moreover, in this case, an odd divisor $f$ verifies (\ref{equation: replace}) if and only if $f$ is an odd multiple of  
$$
f_0:=\Lcm \left(\ord(-k^2-k),\frac{\ord(k)}{3}\right)
$$
that is not a multiple of $3f_0$. Since the set of such $f$'s is in bijection with the set
$$
\left\{1\leq m \leq \frac{\ell-1}{f_0}\,|\,(m,6)=1 \right\}\,,
$$
the number of such $f$'s is $\frac{\ell-1}{3f_0}=\frac{\ell-1}{N_k}$. But by Proposition \ref{proposition: alaRib}, this number is precisely $\dim(\Ker(D_k))$.

\end{proof}

The following result tells us that the rank of $D_k$ is  ``asymptotically non-degenerate".

\begin{proposition}
For $\ell$ prime, we have
$$
\lim_{\begin{array}{c}\ell \rightarrow \infty\\ 1\leq k\leq\ell-2 \end{array}}\frac{\rk(D_k)}{r_k}=1\,.
$$
\end{proposition}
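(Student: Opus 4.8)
I want to show that the proportion of degenerate $k$'s (those for which $\det(D_k)=0$) among $1\leq k\leq \ell-2$ tends to $0$, and moreover that for the non-degenerate $k$'s one has $\rk(D_k)=r_k$ while for the degenerate ones $\rk(D_k)$ is still quite close to $r_k$. Since $\rk(D_k)\leq r_k$ always, it suffices to bound $r_k-\rk(D_k)$ from above by something that is $o(r_k)$ uniformly. By Theorem \ref{theorem: det deg}, if $k$ is degenerate then $r_k=\frac{\ell-1}{2}$ (because $n_k=1$, as $k$ is not a primitive cubic root of unity) and $\rk(D_k)=\frac{\ell-1}{2}\bigl(1-\frac{2}{N_k}\bigr)$, so $r_k-\rk(D_k)=\frac{\ell-1}{N_k}$ where $N_k=\operatorname{Lcm}(\ord(-k^2-k),\ord(k))$. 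If $k$ is non-degenerate, $r_k-\rk(D_k)=0$. Hence in all cases
$$
0\leq \frac{r_k-\rk(D_k)}{r_k}\leq \frac{2}{N_k}\,,
$$
with the convention that the middle term is $0$ unless $k$ is degenerate. So everything comes down to showing that $N_k\to\infty$ uniformly as $\ell\to\infty$ — more precisely, that for any fixed bound $B$, the number of $k\in\{1,\dots,\ell-2\}$ with $N_k\leq B$ is $o(\ell)$, and in fact we can do better and get a bound valid for all $k$ once $\ell$ is large.

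\textbf{Key steps.} First, I would reduce to controlling $\ord(k)$: since $N_k$ is a multiple of $\ord(k)$, it is enough to show $\ord(k)\to\infty$ uniformly — but this is false for individual $k$ (e.g. $k$ of small order exists for every $\ell$ with suitable congruence conditions). So instead I argue on average / by counting. For a fixed integer $d$, the number of elements of $G=(\Z/\ell\Z)^*$ of order dividing $d$ is at most $d$ (the subgroup of $d$-torsion of a cyclic group has order $\gcd(d,\ell-1)\leq d$). Therefore the number of $k\in\{1,\dots,\ell-2\}$ with $\ord(k)\leq B$ is at most $\sum_{d\leq B}d\leq B^2$, a bound independent of $\ell$. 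Consequently, for all but at most $B^2$ values of $k$ we have $\ord(k)>B$, hence $N_k>B$, hence $\frac{r_k-\rk(D_k)}{r_k}<\frac{2}{B}$. Now, to pass from "all but $B^2$ values of $k$" to a genuine limit statement, I note that the quantity $\frac{\rk(D_k)}{r_k}$ is in $[0,1]$, so for the exceptional $k$'s we have the trivial bound $1-\frac{\rk(D_k)}{r_k}\leq 1$. But the limit in the statement is a double limit over the pair $(\ell,k)$ with $1\leq k\leq \ell-2$; its meaning is: for every $\varepsilon>0$ there is $L_0$ such that for all $\ell>L_0$ and all $k$, $\bigl|\frac{\rk(D_k)}{r_k}-1\bigr|<\varepsilon$. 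So I actually need a bound valid for every $k$, not just most $k$. For that I use the degenerate-case formula more carefully: when $k$ is degenerate, by Theorem \ref{theorem: carac} conditions ii)–iii) force $3\mid \ord(k)$ and $\ord(-k^2-k)$ odd, and $N_k=3f_0$ with $f_0=\operatorname{Lcm}(\ord(-k^2-k),\ord(k)/3)$. The cyclotomic field $F^{W_k}=F$ (since $n_k=1$) has the CM abelian variety $B_k$ of dimension $r_k=\frac{\ell-1}{2}$; degeneracy is a strong constraint. Here I would invoke the known lower bound on $N_k$ for degenerate pairs — namely that degeneracy of $(\ell,k)$ forces a nontrivial relation among cyclotomic units / a special structure that cannot occur when $\ell$ is small relative to $N_k$; concretely, Theorem \ref{theorem: carac} combined with the observation that $\ord(-k^2-k)$ odd and $3\mid\ord(k)$ with $v_3(\ord(k))>v_3(\ord(-k^2-k))$ already implies $N_k$ is divisible by $3\cdot\ord(-k^2-k)$ and by $\ord(k)$; I then need an effective lower bound on $\max(\ord(k),\ord(-k^2-k))$ for degenerate $k$.

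\textbf{The main obstacle.} The delicate point is precisely this last step: there genuinely can exist, for suitable $\ell$, a degenerate $k$ with $N_k$ bounded — unless one shows otherwise. I believe the resolution is the Stark–Lenstra type result alluded to in Remark \ref{remark: Stark Lenstra}: one shows that degeneracy forces $\ell\equiv 1\pmod 3$ and that $N_k\mid \ell-1$ together with the multiplicative constraints of Theorem \ref{theorem: carac} only have solutions when $N_k$ is large; more robustly, one invokes that the number of degenerate $k$ for a given $\ell$ is itself controlled (cf.\ \cite{FS15}), and for those $k$ the value $N_k$ satisfies $\frac{2}{N_k}\to 0$. The cleanest route, which I would adopt, is: (a) for every $\varepsilon>0$ choose $B>2/\varepsilon$; (b) the "bad" $k$ with $N_k\leq B$ number at most $B^2$ by the torsion-counting argument, \emph{and} among these the ones that are actually degenerate must in addition satisfy the three conditions of Theorem \ref{theorem: carac}, in particular $3\mid N_k$ and $N_k\mid\ell-1$; (c) I then need to rule out, for $\ell$ large, that \emph{any} degenerate $k$ has $N_k\leq B$ — for this I observe that if $N_k\leq B$ then $k$ lies in the unique subgroup of $G$ of order dividing $B!$, whose size is $\gcd(B!,\ell-1)$, and I pair this with the requirement (from degeneracy, via Proposition \ref{proposition: equi pep} / Theorem \ref{theorem: det deg}) that $\mathcal F_0\neq\emptyset$, i.e.\ that $N_k/3$ has an odd multiple dividing $\ell-1$ that is not a multiple of $N_k$, forcing $v_3(\ell-1)\geq v_3(N_k)$ and additional structure. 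If even after this a finite set of possible small $N_k$ remains, I finish by the trivial observation that for each such $N_k$ the contribution $\frac{2}{N_k}$ is anyway bounded by $\frac{2}{3}$ — no, that is not $o(1)$ — so I really must push the counting: the correct and honest statement is that \emph{for fixed $\varepsilon$, all but finitely many $\ell$ have no degenerate $k$ with $N_k\leq 2/\varepsilon$}, which follows because the conditions of Theorem \ref{theorem: carac} with $N_k$ bounded impose that $\ell-1$ is divisible only by a bounded set of primes to bounded powers in the relevant part, a condition satisfied by at most finitely many primes $\ell$ once we also use that $\ord(k)$ and $\ord(-k^2-k)$ must divide $\ell-1$ and be coprime-ish as dictated by (b)–(c). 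I would present this counting argument in full and flag that the only input beyond elementary group theory is Theorem \ref{theorem: carac} itself.
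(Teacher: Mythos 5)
Your reduction coincides with the paper's: for a non-degenerate pair the quotient is $1$, and for a degenerate pair Theorem \ref{theorem: det deg} gives $n_k=1$, $r_k=\frac{\ell-1}{2}$ and $\rk(D_k)=\frac{\ell-1}{2}\left(1-\frac{2}{N_k}\right)$, so that $1-\rk(D_k)/r_k=2/N_k$ and everything comes down to showing that for each fixed $N_0$ only finitely many primes $\ell$ admit a degenerate $k$ with $N_k=N_0$. Up to here you are fine. Your torsion count (at most $B^2$ values of $k$ have $\ord(k)\leq B$) is correct but, as you yourself note, beside the point: the limit statement requires a bound valid for \emph{every} $k$, so ``most $k$ are fine'' proves nothing, and the whole weight of the proposition rests on the step you leave open.

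That step is where the genuine gap lies. Your proposed resolution --- that the conditions of Theorem \ref{theorem: carac} with $N_k$ bounded force ``$\ell-1$ to be divisible only by a bounded set of primes to bounded powers in the relevant part'', hence only finitely many $\ell$ --- is not an argument and is in fact false as stated: the conditions only constrain $\ord(k)$ and $\ord(-k^2-k)$, and for any fixed $d$ there are infinitely many primes $\ell\equiv 1\pmod d$ possessing elements of order exactly $d$, so bounding these orders does not bound $\ell$. What does bound $\ell$, and what the paper actually uses, is that a degenerate $k$ with $N_k=N_0=3f$ satisfies the two simultaneous congruences of Proposition \ref{proposition: equi pep}.ii), $k^{2f}+k^f+1\equiv 0\pmod{\ell}$ and $(k+1)^fk^f\equiv -1\pmod{\ell}$, with $k^2+k+1\not\equiv 0\pmod\ell$. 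Hence $k$ is a common root modulo $\ell$ of $p_f(x)=x^{2f}+x^f+1$ and $q_f(x)=\bigl((x+1)^fx^f+1\bigr)/(x^2+x+1)\in\Z[x]$, so $\ell$ divides their resultant $R_{N_0}$; and $R_{N_0}\neq 0$ because the roots of $p_f$ are simple roots of unity while $q_f$ has no roots of finite order. This nonvanishing-resultant argument (two polynomial conditions on $k$ with no common root over $\overline\Q$, forcing $\ell$ to divide a fixed nonzero integer) is the missing idea; without it, or an explicit appeal to a result such as \cite{FS15}, your proof does not close --- and your own hedging (``I believe the resolution is\dots'', ``if even after this a finite set \dots remains'') shows the step was not actually carried out.
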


\begin{proof} When $\ell$ is non-degenerate, the quotient $\rk(D_k)/r_k=1$ and there is nothing to prove. Since when $(\ell,k)$ is degenerate, one has $r_k=\frac{\ell-1}{2}$, by Theorem \ref{theorem: det deg} it is enough to show that $N_k\rightarrow \infty$ when $\ell\rightarrow \infty$ (the existence of infinitely many degenerate primes is ensured by Remark \ref{remark: Stark Lenstra}). Given an integer $N_0 >0$, we want to show that there exists a prime $\ell_0> 0$ such that for every $\ell > \ell_0$ and every $1\leq k \leq \ell-2$, one has that $N_k> N_0$. This immediately follows from the claim that for every $N_0$, the set $S_{N_0}$ of degenerate primes $\ell$ such that $N_0=N_k$ for some $1\leq k \leq \ell-2$ has finite cardinality. Indeed, set $f:=N_0/3$ and define the polynomials
$$
p_f(x):=x^{2f}+x^f+1\,,\qquad q_f(x):=\frac{(x+1)^fx^f+1}{x^2+x+1}\in \Z [x]\,.
$$
Clearly, $S_{N_0}$ is a subset of the set of primes dividing the resultant $R_{N_0}$ of $p_f(x)$ and $q_f(x)$, and therefore it suffices to show that $R_{N_0}$ is nonzero. This may be deduced from the fact that the roots of $p_f(x)$ are unrepeated roots of unity, whereas $q_f(x)$ has neither double roots nor roots of finite order. 
\end{proof}

\begin{remark}
We claim that $N_k\geq 27$. This implies that 
$$
\rk(D_k)\geq \frac{25}{27}\cdot \frac{\ell-1}{2}\,,
$$
which is a slightly better bound than the one computed in \cite{Mai89}. This bound is sharp, since $N_k=27$ for $\ell=271$ and $k=32$. Recalling the notation of the previous proof, to show that $N_k\geq 27$, it is enough to observe that $S_9=S_{15}=S_{21}=\emptyset$. This follows from the fact that
$$
R_9=3^4\,,\qquad R_{15}= 5^{10}\,,\qquad R_{21}=7^{16}\,,
$$
and none of $3$, $5$, or $7$ is degenerate. Note that $R_{27}=3^{16}\cdot 271^6$ and thus $S_{27}=\{271\}$. 
\end{remark}

We will say that a divisor $f$ of $\ell-1$ is a $k$-degenerate residue degree if $\det(D_{k,f}) = 0$ and that $f$ is non-$k$-degenerate otherwise (however, to easy the terminology, we will drop the $k$ from now on). By Lemma \ref{lemma: cons wfk}, if $f$ is even, then $f$ is degenerate. For a degenerate pair $(\ell,k)$, let $\mathcal F_1$ denote the set of odd divisors $f$ of $\ell-1$ such that $v_3(f)\geq v_3(N_k)$.

\begin{proposition}\label{proposition: char degenerate res} Let $f$ be an odd divisor of $\ell-1$. Then:
\begin{enumerate}[i)]
\item If $(\ell,k)$ is degenerate, then $f$ is a non-degenerate residue degree if and only if $f\in \mathcal F_0 \cup \mathcal F_1$.
Moreover, in case $f\in \mathcal F_1$ we have $W_{k,f}=H_{f}$, whereas in case $f \in \mathcal F_0$ we have $W_{k,f}=H_{3f}$. If $f$ is degenerate, then $W_{k,f}=H_f$.
\item If $(\ell,k)$ is non-degenerate, then $f$ is non-degenerate. Moreover, $W_{k,f}=H_{3f}$ if $k$ is a primitive cubic root of unity and $v_3(f)=0$, and $W_{k,f}=H_{f}$ otherwise.
\end{enumerate}
\end{proposition}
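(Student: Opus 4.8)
The plan is to reduce everything to the characterization of when $\sum_{a\in M_k}\psi_0(a)^f=0$, via the combination of Proposition \ref{proposition: alaRib}, Proposition \ref{proposition: calculet brillant}, Proposition \ref{proposition: equi pep}, and Theorem \ref{theorem: det deg}. First recall from Proposition \ref{proposition: alaRib} that $\det(D_{k,f})\neq 0$ if and only if $\sum_{a\in M_k/W_{k,f}}\chi(a)\neq 0$ for every $\chi\in X^-_{k,f}(G)$, and from Proposition \ref{proposition: demjanenko determinant} that this is equivalent to $\sum_{a\in M_k}\chi(a)\neq 0$ for every such $\chi$. Since $X^-_{k,f}(G)$ consists precisely of the odd characters of $G$ trivial on $W_{k,f}$, and since $W_{k,f}\supseteq H_f$ always, every element of $X^-_{k,f}(G)$ is a power $\psi_0^{f\cdot m}$ of the generator $\psi_0$ of $X(G)$ for suitable $m$; moreover the oddness condition forces $\gcd(fm,2)$ to be such that the character is odd, so these are the odd multiples of $f$ modulo $\ell-1$ (up to identifying a character with its exponent). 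Thus $f$ is non-degenerate if and only if $\sum_{a\in M_k}\psi_0(a)^{fm}\neq 0$ for all odd $m$ with $\psi_0^{fm}$ trivial on $W_{k,f}$ — but since $\sum_{a\in M_k}\psi_0(a)^{fm}=0$ would by Proposition \ref{proposition: equi pep} (applied with $fm$ in place of $f$, after noting $fm\mid \ell-1$ need not hold literally but one works with the order) force $\psi_0^{fm}$ to be trivial on a strictly larger group, one has to be careful; the cleanest route is to observe that $\sum_{a\in M_k}\psi_0(a)^{f'}=0$ for some odd $f'$ which is a multiple of $f$ in the cyclic group sense if and only if $\mathcal F_0$, computed relative to $f$, is nonempty in the appropriate range.

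Concretely, I would argue as follows. Suppose $(\ell,k)$ is non-degenerate. Then by Theorem \ref{theorem: det deg}, $\mathcal F_0=\emptyset$, so there is no odd divisor $f'$ of $\ell-1$ with $\sum_{a\in M_k}\psi_0(a)^{f'}=0$; a fortiori this vanishing fails for every odd multiple of any fixed odd $f$, hence $\det(D_{k,f})\neq 0$ by the chain above, i.e. every odd $f$ is non-degenerate. To compute $W_{k,f}$ in this case: if $k$ is a primitive cubic root of unity modulo $\ell$, then by Lemma \ref{lemma: card W} we have $W_k=\{1,k,-k-1\}$, and one checks directly from $W_{k,f}=\{w: [H_f][M_k^{-1}]([w]-[1])=0\}$ (equation \ref{equation: carac}) that $W_{k,f}\supseteq \langle W_k, H_f\rangle$, which has order $3f$ when $v_3(f)=0$ (since then $W_k\cap H_f=\{1\}$); the reverse inclusion follows because $W_{k,3f}=H_{3f}$ would force $W_{k,f}\subseteq H_{3f}$ by monotonicity of $W_{k,f}$ in $f$, and a size count gives equality. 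When $v_3(f)>0$ or $k$ is not a cube root of unity, $k\in H_f$ or the cubic-root obstruction is already absorbed, and one shows $W_{k,f}=H_f$ by evaluating $[M_k^{-1}][H_f][w]=[M_k^{-1}][H_f]$ at characters: if $w\notin H_f$ pick $\chi=\psi_0^{fm}$ trivial on $H_f$ but not on $w$; since $\sum_{a\in M_k}\chi(a)\neq 0$ (non-degeneracy), we get $\chi(w)=1$, contradiction. This uses Proposition \ref{proposition: calculet brillant} to see that the only way $\sum_{a\in M_k}\chi(a)=0$ for an odd $\chi$ is the cubic-root relation, which cannot occur here except in the excluded case.

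Now suppose $(\ell,k)$ is degenerate and $f$ is an odd divisor of $\ell-1$. By Theorem \ref{theorem: det deg}, $\mathcal F_0$ is exactly the set of odd divisors of $\ell-1$ that are multiples of $N_k/3$ but not of $N_k$. If $f\in\mathcal F_0$ then Proposition \ref{proposition: equi pep} gives $W_{k,f}=H_{3f}$ directly. If $f\in\mathcal F_1$, i.e. $v_3(f)\geq v_3(N_k)$, I claim $\det(D_{k,f})\neq 0$: an odd $f'$ with $\sum_{a\in M_k}\psi_0(a)^{f'}=0$ must, by the equivalence $i)\Leftrightarrow ii)$ of Proposition \ref{proposition: equi pep} and the reformulation (\ref{equation: replace}) in the proof of Theorem \ref{theorem: det deg}, be an odd multiple of $N_k/3$ not a multiple of $N_k$; but the relevant characters $\chi\in X^-_{k,f}(G)$ correspond to $\psi_0^{f'}$ with $N_k/3$... here one must check $f' $ cannot be such a multiple when $v_3(f)\geq v_3(N_k)$, which follows from $v_3(f')\geq v_3(f)\geq v_3(N_k)$ forcing $N_k\mid f'$ — hence no bad character, so $\det(D_{k,f})\neq 0$, and then $W_{k,f}=H_f$ follows by the same character-evaluation argument as above (using that no odd $\chi$ trivial on $H_f$ but nontrivial elsewhere can kill $\sum_{a\in M_k}\chi(a)$). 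Conversely, if $f$ is odd, $f\notin\mathcal F_0\cup\mathcal F_1$, I must exhibit a nonzero character sum vanishing: write $v_3(f)=e<v_3(N_k)$; then $f' := f\cdot 3^{\,v_3(N_k)-1-e}$ has $v_3(f')=v_3(N_k)-1=v_3(N_k/3)$ and (after possibly adjusting the prime-to-$3$ part to be a multiple of the prime-to-$3$ part of $N_k/3$, which is forced by $N_k/3\mid \ell-1$ and the structure) lies in $\mathcal F_0$ relative to... actually the cleaner statement: since $f\notin\mathcal F_1$ we have $v_3(f)<v_3(N_k)$, and since $f\notin\mathcal F_0$ either $v_3(f)\neq v_3(N_k)-1$ or $N_k/3\nmid f$; in all such cases there is an odd multiple $f'$ of $f$ with $f'\in\mathcal F_0$, giving a character $\psi_0^{f'}\in X^-_{k,f}(G)$ with $\sum_{a\in M_k}\psi_0(a)^{f'}=0$, hence $\det(D_{k,f})=0$; and in the $f'\in\mathcal F_0$ case one also records $W_{k,f}=H_f$ since $f\notin\mathcal F_0$ itself means the condition $iii)$ of Proposition \ref{proposition: equi pep} fails for $f$ (no $w\notin H_f$ normalizes), consistent with the claim.

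\textbf{Main obstacle.} The delicate point — and the step I expect to absorb most of the work — is the bookkeeping in part i) that identifies exactly the odd $f$ for which some odd multiple $f'$ of $f$ (in $(\Z/(\ell-1))^*$-sense, i.e. $f\mid f'\mid \ell-1$ with $f'/f$ coprime to... no: $f'$ an odd divisor of $\ell-1$ that is a multiple of $f$) lands in $\mathcal F_0=\{$odd divisors multiple of $N_k/3$, not of $N_k\}$. This is a purely combinatorial statement about divisibility and $3$-adic valuations: such an $f'$ exists iff $v_3(f)\leq v_3(N_k)-1$ and the prime-to-$3$ part of $N_k/3$ divides the prime-to-$3$ part of... one needs to verify carefully that the prime-to-$3$ constraint is automatically satisfied because $N_k/3$ divides $\ell-1$ and $f$ ranges over divisors, which is \emph{not} automatic and requires using that $f\notin\mathcal F_0\cup\mathcal F_1$ pins down $v_3(f)$ but leaves the prime-to-$3$ part free — so one must check both that when $v_3(f)=v_3(N_k)-1$ and $N_k/3\nmid f$ (the prime-to-$3$ part fails) there is still a multiple in $\mathcal F_0$ (take $f'=\Lcm(f,N_k/3)$, which is odd, a multiple of $f$, has $v_3 = v_3(N_k)-1$, and is a multiple of $N_k/3$ but — crucially — not of $N_k$ since $v_3(f')=v_3(N_k)-1<v_3(N_k)$), and symmetrically handle $v_3(f)<v_3(N_k)-1$. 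Once this divisor-lattice lemma is in place, everything else is assembling Propositions \ref{proposition: alaRib}, \ref{proposition: demjanenko determinant}, \ref{proposition: equi pep} and Theorem \ref{theorem: det deg}, together with the monotonicity $f_1\mid f_2\Rightarrow W_{k,f_1}\subseteq W_{k,f_2}$ (immediate from \ref{equation: carac} since $[H_{f_1}]\mid [H_{f_2}]$ as "multiplication" operators, i.e. $[H_{f_2}]=[H_{f_1}][H_{f_2}/H_{f_1}]$... more precisely $[H_{f_2}][M_k^{-1}]([w]-1)=[H_{f_2}/H_{f_1}]\cdot [H_{f_1}][M_k^{-1}]([w]-1)$), which controls the size counts giving $W_{k,f}=H_f$ versus $H_{3f}$.
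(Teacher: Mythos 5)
Your overall strategy coincides with the paper's: reduce the (non)vanishing of $\det(D_{k,f})$ to the (non)vanishing of the sums $\Sigma_{f'}:=\sum_{a\in M_k}\psi_0(a)^{f'}$ attached to the odd characters trivial on $W_{k,f}$ (Propositions \ref{proposition: demjanenko determinant} and \ref{proposition: alaRib}), determine $W_{k,f}$ through Proposition \ref{proposition: equi pep}, and use the description of the vanishing exponents coming from Theorem \ref{theorem: det deg}, together with $3$-adic bookkeeping. Your explicit witness $f'=\Lcm(f,N_k/3)$ in the degenerate case $f\notin\mathcal F_0\cup\mathcal F_1$ is actually more detailed than the paper, which only asserts that some multiple of $f$ lies in $\mathcal F_0$.

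There is, however, a genuine error in your non-degenerate case. Theorem \ref{theorem: det deg} says $(\ell,k)$ is degenerate iff $k$ is \emph{not} a primitive cubic root of unity \emph{and} $\mathcal F_0\neq\emptyset$; its negation is ``$k$ is a primitive cubic root of unity \emph{or} $\mathcal F_0=\emptyset$'', not ``$\mathcal F_0=\emptyset$''. When $k$ is a primitive cubic root of unity modulo $\ell$ --- a non-degenerate pair, and exactly the case in which the proposition claims $W_{k,f}=H_{3f}$ for $v_3(f)=0$ --- one has $\ord(k)=3$, $\ord(-k^2-k)=1$, $N_k=3$, so $\mathcal F_0$ consists of \emph{all} odd divisors of $\ell-1$ prime to $3$; in particular $1\in\mathcal F_0$ and $\Sigma_1=0$. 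Hence your claim that ``there is no odd divisor $f'$ of $\ell-1$ with $\Sigma_{f'}=0$'', and with it your derivation of $\det(D_{k,f})\neq 0$ for every odd $f$, breaks down precisely in this case; your later computation of $W_{k,f}$ there does not supply the missing nonvanishing argument. The correct order (and the paper's) is: first pin down $W_{k,f}$ ($H_{3f}$ if $v_3(f)=0$, $H_f$ if $v_3(f)>0$), then note that the odd characters trivial on $W_{k,f}$ have exponents divisible by $3$, hence avoid the bad set $\{v_3=0\}$, so all relevant sums are nonzero. Two smaller gaps of the same nature: (1) in your character-evaluation argument for $W_{k,f}=H_f$ you take an arbitrary $\chi=\psi_0^{fm}$ nontrivial on $w$ and assert $\sum_{a\in M_k}\chi(a)\neq 0$ ``by non-degeneracy''; this is false for nontrivial \emph{even} $\chi$ (those sums always vanish, cf.\ Proposition \ref{proposition: alaRib}), and in the cubic-root case non-degeneracy of $(\ell,k)$ controls only characters trivial on $W_k$ --- the statement you actually need is just that $W_{k,f}=H_f$ iff $f\notin\mathcal F_0$, which is Proposition \ref{proposition: equi pep}; (2) for degenerate $(\ell,k)$ and $f\in\mathcal F_0$ you record $W_{k,f}=H_{3f}$ but never check $\det(D_{k,f})\neq 0$, which requires observing that odd multiples of $3f$ have $v_3\geq v_3(N_k)$ and so are never bad exponents. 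All of this is repairable with the tools you already invoke, but as written the non-degenerate cubic-root case is not proved.
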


\begin{proof}
To shorten notation, let us write $\Sigma_f$ for $\sum_{a\in M_k}\psi_0(a)^{f}$. Assume that $(\ell,k)$ is degenerate. If $f\in \mathcal F_1$, then no multiple of $f$ lies in $\mathcal F_0$. Thus $W_{k,f}=H_f$ and $\Sigma_{f'}\not = 0$ for every multiple $f'$ of $f$. Then Proposition \ref{proposition: demjanenko determinant} yields $\det(D_{k,f}) \not= 0$. If $f\in \mathcal F_0$, then $W_{k,f}=H_{3f}$. Since $\Sigma_{f'}\not = 0$ for every multiple $f'$ of $3f$, Proposition \ref{proposition: demjanenko determinant} yields $\det(D_{k,f}) \not= 0$.
If $f\not \in \mathcal F_0 \cup \mathcal F_1$, then $v_3(f)< v_3(N_k)$ and $f$ is not a multiple of $N_k/3$. Since $f\not\in \mathcal F_0$, we have $W_{k,f}=H_{f}$. Note that there exists a multiple $f'$ of $f$ in $\mathcal F_0$. Then, Proposition \ref{proposition: demjanenko determinant} yields $\det(D_{k,f}) = 0$.

Assume that $(\ell,k)$ is non-degenerate. Let $f$ be an odd divisor of $\ell-1$. If $k$ is not a primitive cubic root of unity, then $W_k$ is trivial. Then, by Proposition \ref{proposition: demjanenko determinant}, $\det(D_k)\not=0$ implies that $f$ is non-degenerate. In this case, $W_{k,f}=H_f$. Suppose now that $k$ is a primitive cubic root of unity. If $v_3(f)>0$, then $W_{k,f}=H_f$ and for every multiple $f'$ of $f$ we have $\Sigma_{f'}\not = 0$. Thus~$f$ is non-degenerate. If $v_3(f)=0$, then $W_{k,f}=H_{3f}$ and for every multiple $f'$ of $3f$ we have $\Sigma_{f'}\not = 0$. Thus $f$ is also non-degenerate.
\end{proof}

The characterization of \emph{non-degenerate residue degree} and the description of $n_{k,f}$ given in the Introduction follow from the previous proposition.

\begin{example} If $(\ell,k)=(67,6)$, then $\det(D_k)=0$. Since $N_k=33$, we have that $\det(D_{k,3})$, $\det(D_{k,11})$, and $\det(D_{k,33})$ are nonzero, that is, $3$, $11$, and $33$ are non-degenerate residue degrees. Similarly, if $(\ell,k)=(163,10)$, then $\det(D_k)=0$, but $\det(D_{k,27})$ and $\det(D_{k,81})$ are nonzero, since $N_k=81$.
\end{example}

\begin{remark}
In Proposition \ref{proposition: char degenerate res}, we have characterized the divisors $f$ of $\ell-1$ for which $\det(D_{k,f})$ vanishes. It is easy to see that, in this case, 
$$
\rk(D_{k,f})= \frac{\ell-1}{2f}\left(1-\frac{2f}{N_{k,f}}\right)\,,
$$ 
where $N_{k,f}:=\Lcm (\ord(-k^2-k),\ord(k),f)$. 
\end{remark}

Finally, we would like to give formulas of $\det(D_{k,f})$ in terms of the relative class number (for $k=1$, similar formulas are given in \cite{Haz90}, \cite{SS95}, or \cite{Doh94}).

\begin{proposition}\label{proposition: formula detkf}
Let $f$ be an odd divisor of $\ell-1$. Set $F_{k,f}:=F^{W_{k,f}}$, let $\omega_{k,f}$ be the number of roots of unity contained in $F_{k,f}$, and let $h^-_{k,f}$ be the relative class number of $F_{k,f}$. One has
$$
\det(D_{k,f})=\frac{ h^-_{k,f}P_{k,f}}{\omega_{k,f}}\left(\frac{f}{n_{k,f}}\right)^{r_{k,f}}\,,\quad\text{where}\quad P_{k,f}:=\prod_{\psi\in X^-_{k,f}(G)}\left( \frac{1}{\psi(k+1)}-1-\frac{1}{\psi(k)}\right)\,.
$$
\end{proposition}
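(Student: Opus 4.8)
The plan is to feed the character-theoretic formula of Proposition~\ref{proposition: demjanenko determinant} into the evaluation~(\ref{equation: Gre}) established in the proof of Proposition~\ref{proposition: calculet brillant}, and then to recognize the resulting product of first generalized Bernoulli numbers as a relative class number via the analytic class number formula for imaginary abelian fields. Concretely, I would start from the second expression in Proposition~\ref{proposition: demjanenko determinant}; since every $\chi\in X^-_{k,f}(G)$ is an odd character of $G$, equation~(\ref{equation: Gre}) applies to each factor $\sum_{a\in M_k}\chi(a)$, and the factor $\frac{1}{\chi(k+1)}-1-\frac{1}{\chi(k)}$ it produces is exactly the $\chi$-indexed term of $P_{k,f}$. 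This gives
$$
\det(D_{k,f})=\left(\frac{-f}{2n_{k,f}}\right)^{r_{k,f}}P_{k,f}\prod_{\chi\in X^-_{k,f}(G)}B_{1,\chi}\,,
$$
so the problem reduces to evaluating $\prod_{\chi\in X^-_{k,f}(G)}B_{1,\chi}$.

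Next I would identify $F_{k,f}=F^{W_{k,f}}$ as a CM field of conductor $\ell$. By Proposition~\ref{proposition: char degenerate res} the subgroup $W_{k,f}$ equals either $H_f$ or $H_{3f}$; as $f$ is odd, both have odd order, so $-1\notin W_{k,f}$ and complex conjugation acts nontrivially on $F_{k,f}$. Being an abelian extension of $\Q$ contained in $\Q(\zeta_\ell)$ and not totally real, $F_{k,f}$ is an imaginary abelian field; its conductor divides $\ell$ and cannot be $1$ (otherwise $F_{k,f}=\Q$ would force $W_{k,f}=G$, impossible since $|W_{k,f}|$ is odd while $|G|=\ell-1$ is even), so it equals $\ell$. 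Moreover $\Gal(F_{k,f}/\Q)\simeq G/W_{k,f}$ has order $2r_{k,f}$, its odd characters are exactly the elements of $X^-_{k,f}(G)$, so there are $r_{k,f}$ of them, and each, being a nontrivial character modulo the prime $\ell$, is primitive of conductor $\ell$, so the normalization $B_{1,\chi}=\frac{1}{\ell}\sum_{a=1}^{\ell-1}\chi(a)a$ used here agrees with the standard generalized Bernoulli number.

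Finally I would apply the analytic class number formula to $K:=F_{k,f}$: one has $h^-_K=Q_K\,w_K\prod_{\chi}\bigl(-\tfrac12 B_{1,\chi}\bigr)$, the product running over the odd characters of $\Gal(K/\Q)$, where $w_K=\omega_{k,f}$ and $Q_K\in\{1,2\}$ is the Hasse unit index. Since $F_{k,f}$ has prime conductor $\ell$, the classical fact that the Hasse unit index of an imaginary abelian field of prime-power conductor equals $1$ gives $Q_K=1$, whence $\prod_{\chi\in X^-_{k,f}(G)}B_{1,\chi}=(-2)^{r_{k,f}}h^-_{k,f}/\omega_{k,f}$. Substituting into the displayed formula for $\det(D_{k,f})$ and simplifying $\left(\tfrac{-f}{2n_{k,f}}\right)^{r_{k,f}}(-2)^{r_{k,f}}=\left(\tfrac{f}{n_{k,f}}\right)^{r_{k,f}}$ yields the claimed identity.

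The step I expect to be the main obstacle is establishing $Q_{F_{k,f}}=1$: it is the only input not already contained in the earlier propositions or in elementary character theory, and it rests on the prime-power-conductor criterion for the Hasse unit index. Matching Bernoulli-number normalizations, counting odd characters, and tracking the signs and powers of $2$ is then routine bookkeeping.
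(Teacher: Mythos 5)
Your argument is correct and is essentially the paper's own proof: you combine Proposition~\ref{proposition: demjanenko determinant} with the evaluation~(\ref{equation: Gre}) and then invoke the analytic class number formula for the CM field $F_{k,f}$, exactly as the paper does. The only difference is that you spell out the Hasse unit index $Q_{F_{k,f}}=1$ (prime conductor) and the primitivity of the odd characters, details which the paper absorbs into its statement~(\ref{equation: ACNF}) of the class number formula by citing \cite{La78}.
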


\begin{proof}
Since $f$ is odd, $F_{k,f}$ is a CM field and the Analytic Class Number Formula  (ACNF) states that 
\begin{equation}\label{equation: ACNF}
h^{-}_{k,f}=\omega_{k,f}\prod_{\psi\in X^-(\Gal(F_{k,f}/\Q))}\frac{-1}{2}B_{1,\psi}
\end{equation}
(see \cite{La78}). Since $X(\Gal(F_{k,f}/\Q)\simeq X^-(G/W_{k,f})=X^-_{k,f}(G)$, we obtain the statement combining ACNF with (\ref{equation: Gre}) and Proposition \ref{proposition: demjanenko determinant}.
\end{proof}

\begin{remark}
One can show that the rank of the image of $\Phi_k^*$ is the dimension of the Mumford-Tate group of $A_k:=\Jac(\Cc_k)$ (see \cite{Yu13}). One thus has that $\rk(D_k)=\dim \Hg(A_k)$.
For $1\leq k,k'\leq \ell-1$, let $A_{k,k'}:=\Jac(\Cc_k)\times\Jac(\Cc_{k'})$ and one has analogously that $\dim \Hg(A_{k,k'})=\rk(D_k|D_{k'})\leq \frac{\ell-1}{2}$. If we choose $k,k'$ in order that $A_k$ and $A_{k'}$ are nonisogenous and of non-degenerate type (we can do this in virtue of Theorems \ref{theorem: KR} and \ref{theorem: det deg}), this yields an example in which
$$
\Hg(A_{k,k'})\subsetneq \Hg(A_k)\times \Hg(A_{k'})\qquad \Lef(A_{k,k'})=\Lef(A_k)\times \Lef(A_{k'})\,.
$$
In particular, the Algebraic Sato-Tate Conjecture, which holds for $A_k$ (resp. $A_{k'}$) taking the algebraic group $\TL(A_k)$ (resp. $\TL(A_{k'})$), does not hold for $A_{k,k'}$ taking the algebraic group $\TL(A_{k,k'})$.
\end{remark}

\section{Computing distributions explicitly}\label{section: explicit distributions}

As mentioned in \S\ref{section: degenerate primes}, the results of \S\ref{section: ST group} suffice to determine the distributions of the coefficients of the normalized local factors attached to $\Cc_k$ when $(\ell,k)$ is non-degenerate. Nonetheless, in this section we will present an alternative direct method to compute them, which works independently. It is based on an accurate description of the local factors $L_p(\Cc_k,T)$, and we emphasize that it may be applied without the necessity of computing the Sato-Tate group. We will compute distributions restricting to primes of a fixed residue degree in $F$. We encounter a curious phenomenon: even in the cases in which the pair $(\ell,k)$ is degenerate, the method of this section permits one to describe the distributions once one restricts to either even or non-degenerate residue degrees (i.e. the necessarily odd residue degrees $f$ such that $\det(D_{k,f})\not = 0$).

\subsection{Local factors of $\Jac(\Cc_k)$}

We will describe the local factors of $\Jac(\Cc_k)$ in terms of the subgroup $W_{k,f}\subseteq G$, where $f$ is a divisor of $\ell-1$.   

\begin{lemma}\label{lemma: Wkf}
Let $\p$ be a prime of $F$ of residue degree $f|\ell-1$ and coprime to $\ell$. For any $a\in G$, we have
$$
W_{k,f}=\{w\in G\,|\, {}^{\sigma_w}(J_{(ka,a)}(\p))=J_{(ka,a)}(\p)\}\,.
$$
\end{lemma}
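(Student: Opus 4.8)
The plan is to reduce the statement to a computation with ideals and then pin down a unit. Write $J:=J_{(ka,a)}(\p)$ and let $p$ be the rational prime below $\p$. Since $F/\Q$ is abelian and unramified at $p\neq\ell$, the decomposition group of $\p$ is the unique subgroup of $G$ of order equal to the residue degree, i.e. it equals $H_f$; in particular ${}^{\sigma_s}\p={}^{\sigma_t}\p$ iff $sH_f=tH_f$. First I would show that $w$ fixes the ideal $J\mathcal O_F$ precisely when $w\in W_{k,f}$: by the first and third identities of (\ref{equation: comm}), $J\mathcal O_F=\prod_{j\in M_k}{}^{\sigma_{aj^{-1}}}\p$ and $({}^{\sigma_w}J)\mathcal O_F=\prod_{j\in M_k}{}^{\sigma_{waj^{-1}}}\p$, so grouping conjugate primes by $H_f$-cosets, the multiplicity of ${}^{\sigma_s}\p$ in these ideals is $|M_k\cap s^{-1}aH_f|=f-E_{k,f}(s^{-1}a)$ and $|M_k\cap s^{-1}waH_f|=f-E_{k,f}(s^{-1}wa)$ respectively. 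Hence $({}^{\sigma_w}J)\mathcal O_F=J\mathcal O_F$ is equivalent to $E_{k,f}(s^{-1}a)=E_{k,f}(s^{-1}wa)$ for all $s\in G$, i.e. (substitute $c=s^{-1}a$ and use that $G$ is abelian) to $w\in W_{k,f}$ by Definition~\ref{definition: Ekf}. In particular ${}^{\sigma_w}J=J$ implies $w\in W_{k,f}$, giving one inclusion.

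For the converse I would fix $w\in W_{k,f}$, so that ${}^{\sigma_w}J=uJ$ with $u\in\mathcal O_F^*$, and prove $u=1$. Since $J_{(ka,a)}$ has weight $1$, for every ideal $\mathfrak b\nmid\ell$ we have $J_{(ka,a)}(\mathfrak b)\,J_{(ka,a)}(\overline{\mathfrak b})=N\mathfrak b$, and combined with the first identity of (\ref{equation: comm}) this gives $|{}^{\sigma_t}J|^2=|J_{(ka,a)}({}^{\sigma_t}\p)|^2=N\p$ under the fixed embedding $F\hookrightarrow\C$, for every $t\in G$; the same applies to ${}^{\sigma_w}J$. Thus every archimedean absolute value of $u$ equals $1$, so $u$ is a root of unity by Kronecker's theorem, and $u=\pm\zeta_\ell^{\,i}$ for some $i$ because $\mu(F)=\langle-\zeta_\ell\rangle$.

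The main obstacle is ruling out $u\neq 1$, and the key is the congruence $J_{(ka,a)}(\p)\equiv1\pmod{\lambda^2}$ (with $\lambda:=1-\zeta_\ell$), valid for every prime of $F$ not above $\ell$ -- in particular for $\p$ and for ${}^{\sigma_w}\p$ (which also lies over $p$ with residue degree $f$). I would prove it by writing $\chi_\p(v)=\zeta_\ell^{\,e(v)}$ for the surjective homomorphism $e\colon\F_\p^*\to\Z/\ell\Z$ attached to $\chi_\p$, and expanding with $\zeta_\ell^{\,n}\equiv1-n\lambda\pmod{\lambda^2}$ and $\ell\in\lambda^{\ell-1}\mathcal O_F\subseteq\lambda^2\mathcal O_F$:
$$
J_{(ka,a)}(\p)=-\sum_{v\neq 0,-1}\zeta_\ell^{\,ka\,e(v)+a\,e(v+1)}\equiv-(N\p-2)+a\lambda\Big(k\sum_{v}e(v)+\sum_{v}e(v+1)\Big)\pmod{\lambda^2}\,.
$$
Both inner sums are $\equiv0\pmod\ell$ since $\sum_{v\in\F_\p^*}e(v)\equiv0$ (because $\sum_{m=0}^{N\p-2}m\equiv0\pmod\ell$, using $\ell\mid N\p-1$ and $\ell$ odd) and $e(1)=e(-1)=0$, while $N\p\equiv1\pmod\ell$ because the residue degree of $\p$ equals the order of $p$ in $G$; hence $J_{(ka,a)}(\p)\equiv1\pmod{\lambda^2}$, and the identical computation applies to ${}^{\sigma_w}\p$. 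Since ${}^{\sigma_w}J=J_{(ka,a)}({}^{\sigma_w}\p)$ by (\ref{equation: comm}), we get $u\equiv1\pmod{\lambda^2}$; then $-\zeta_\ell^{\,i}\equiv-1\not\equiv1\pmod\lambda$ (as $\ell$ is odd) excludes the minus sign, and $\zeta_\ell^{\,i}\equiv1-i\lambda\pmod{\lambda^2}$ forces $\ell\mid i$, so $u=1$ and ${}^{\sigma_w}J=J$. The one delicate point is the vanishing of the discrete-logarithm sums; in characteristic $2$ the argument is identical after replacing the excluded set $\{0,-1\}$ by $\{0,1\}$.
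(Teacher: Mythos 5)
Your proof is correct, and its skeleton coincides with the paper's: both arguments reduce the statement to the question of when $\sigma_w$ fixes the \emph{ideal} $J_{(ka,a)}(\p)\mathcal O_F$, and both then use the factorization $J_{(ka,a)}(\p)\mathcal O_F=\prod_{j\in M_k}{}^{\sigma_{aj^{-1}}}\p$ together with the fact that the decomposition group of $\p$ is $H_f$, so that the multiplicity count in $H_f$-cosets turns ideal-invariance into the condition $E_{k,f}(c)=E_{k,f}(cw)$ for all $c$, i.e.\ $w\in W_{k,f}$ (this is exactly the displayed equivalence in the paper's proof). The difference is in the other half: the paper simply cites \cite[Lemma~3.2]{Gon99} for the fact that fixing the ideal already forces fixing the element $J_{(ka,a)}(\p)$ itself, whereas you prove this from scratch, by noting that the quotient $u$ is a unit all of whose archimedean absolute values are $1$ (weight~$1$ plus (\ref{equation: comm})), hence a root of unity $\pm\zeta_\ell^i$ by Kronecker, and then killing it with the classical congruence $J_{(ka,a)}(\q)\equiv 1\pmod{(1-\zeta_\ell)^2}$ for $\q\nmid\ell$ --- the same congruence underlying Hasse's conductor statement quoted in \S\ref{section: Hecke characters}. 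Your congruence computation (homogeneity of the discrete logarithm sums, $e(\pm1)=0$, $N\p\equiv1\pmod\ell$, and $\ell\in(1-\zeta_\ell)^2\mathcal O_F$) is sound, so what your route buys is a self-contained proof of the lemma at the cost of a page of elementary Jacobi-sum arithmetic that the paper outsources to the reference.
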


\begin{proof}
By \cite[Lemma~3.2]{Gon99}, the right hand side of the above equation coincides with
$$
\{w\in G\,|\, {}^{\sigma_w}(J_{(ka,a)}(\p)\mathcal O_F)=J_{(ka,a)}(\p)\mathcal O_F\}\,.
$$
Now the lemma follows from
$$
{}^{\sigma_w}(J_{(ka,a)}(\p)\mathcal O_F)=J_{(ka,a)}(\p)\mathcal O_F\quad \Longleftrightarrow \quad 
\prod_{j\in G/H_f}{}^{\sigma_j}\p^{E_{k,f}(-aj^{-1})}=\prod_{j\in G/H_f}{}^{\sigma_j}\p^{E_{k,f}(-waj^{-1})}\,.
$$
\end{proof}

\begin{proposition}\label{proposition: local factor}
Let $p\not =\ell$ be a prime of residue degree $f$ in $F$. For any prime $\p$ of $F$ lying over~$p$, one has
$$
L_p(\Cc_k,T)=\prod_{a\in G/W_{k,f}}(1-J_{(ka,a)}(\p)T^f)^{\frac{n_{k,f}}{f}}\,.
$$
\end{proposition}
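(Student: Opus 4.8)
The plan is to start from the $G_F$-module decomposition $V_{\ell'}(\Cc_k)=\bigoplus_{a\in G}V_{(ka,a)}$ recalled in (\ref{equation: del dec}), where an arithmetic Frobenius $\Frob_\p$ acts on $V_{(ka,a)}$ by multiplication by $J_{(ka,a)}(\p)$. Fixing a prime $p\neq \ell$ of residue degree $f$ in $F$ and a prime $\p$ of $F$ above $p$, the local factor $L_p(\Cc_k,T)$ is, by the very definition of the local $L$-factor (the one recalled in the Notation section, applied to $\Jac(\Cc_k)$ over $\Q$), the reverse characteristic polynomial of the geometric Frobenius at $p$ acting on $V_{\ell'}(\Cc_k)$ as a $G_\Q$-module. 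Since $\Frob_p^{f}$ generates the decomposition group of $\p$ inside $G/\Gal(\overline\Q/F)\simeq G_\Q/G_F$-action and $\Frob_p$ permutes the lines $V_{(ka,a)}$, the first step is the standard induced-representation computation: for each $G_\Q$-orbit $\mathcal{O}$ of lines under the $\Frob_p$-action, the contribution to $L_p(\Cc_k,T)$ is $(1-(\text{product of the }J\text{-values around the orbit})\,T^{\#\mathcal{O}})$.

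Next I would identify these orbits precisely. The Galois action on the indexing set $G$ is $t\cdot a = \langle ta\rangle$ (via the second identity in (\ref{equation: comm}), ${}^{\sigma_t}(J_{(ka,a)}(\p))=J_{(tka,tk)}(\p)$, read together with the first identity ${}^{\sigma_t}(J_{(ka,a)}(\p))=J_{(ka,a)}({}^{\sigma_t}\p)$). The decomposition group of $\p$ in $\Gal(F/\Q)\simeq G$ is the cyclic subgroup $H_f$ of order $f$, since $\p$ has residue degree $f$. Hence the $G_\Q$-orbit through the index $a$ has size equal to the order of $\Frob_p$ acting there; but by Lemma \ref{lemma: Wkf}, the stabilizer in $H_f$ of the \emph{value} $J_{(ka,a)}(\p)$ — equivalently, using \cite[Lemma~3.2]{Gon99} as in that proof, of the \emph{ideal} $J_{(ka,a)}(\p)\mathcal{O}_F$ — is $H_f\cap W_{k,f}$. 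Since $H_f\subseteq W_{k,f}$ always holds (Remark after Definition \ref{definition: Dkf}), this stabilizer is all of $H_f$, so $\Frob_p$ fixes each value $J_{(ka,a)}(\p)$: the $f$ lines in one $H_f$-orbit of indices all carry the same eigenvalue $J_{(ka,a)}(\p)$, and the block $\prod_{j\in H_f}(1-J_{(kaj,aj)}(\p)\,T)$ becomes $(1-J_{(ka,a)}(\p)\,T^f)$ after passing to the Frobenius-at-$p$ (not at $\p$) characteristic polynomial — more precisely, an orbit of $f$ lines all with the same eigenvalue $\lambda$ contributes $(1-\lambda T^f)$ to $L_p$ with multiplicity one per such orbit of size $f$, but here we must be careful: the $G$-orbit structure on the full index set $G$ under $H_f$ partitions $G$ into $(\ell-1)/f$ orbits of size $f$, and on each the eigenvalue is constant.

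Then I would group these $(\ell-1)/f$ size-$f$ index-orbits according to the value $J_{(ka,a)}(\p)$ they produce. Again by Lemma \ref{lemma: Wkf}, two indices $a,a'$ give the same value $J_{(ka,a)}(\p)=J_{(ka',a')}(\p)$ precisely when $a'\in aW_{k,f}$; so the distinct values are indexed by $G/W_{k,f}$, and each value $J_{(ka,a)}(\p)$ arises from exactly $|W_{k,f}|/f = n_{k,f}/f$ of the size-$f$ orbits. Collecting, $L_p(\Cc_k,T)=\prod_{a\in G/W_{k,f}}(1-J_{(ka,a)}(\p)\,T^f)^{n_{k,f}/f}$, which is the claim. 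The main obstacle — really the only delicate point — is bookkeeping the difference between Frobenius at $\p$ (over $F$, acting on $V_{\ell'}(\Cc_k)$ which over $F$ is $\bigoplus_a V_{(ka,a)}$ with eigenvalues $J_{(ka,a)}(\p)$ and giving $L_\p((\Cc_k)_F,T)=\prod_a(1-J_{(ka,a)}(\p)T)$) and Frobenius at $p$ (over $\Q$), i.e. correctly performing the restriction-of-scalars/induction from $G_F$ to $G_\Q$ so that a $\Gal(F/\Q)$-orbit of $f$ conjugate eigenvalues — which here, crucially, coincide — collapses to a single factor in $T^f$ with the right exponent; the constancy of $J_{(ka,a)}(\p)$ along $H_f$-orbits, guaranteed by $H_f\subseteq W_{k,f}$ via Lemma \ref{lemma: Wkf}, is exactly what makes this collapse clean. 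One should also note $L_p(\Cc_k,T)=L_p(\Jac(\Cc_k),T)$ and that Lemma \ref{lemma: Lfunctions}'s identification $L(\Cc_k,s)=L(J_{(ka,a)},s)$ is the global shadow of this local statement, providing a consistency check.
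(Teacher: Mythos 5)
Your proposal is correct, but it takes a genuinely different route from the paper's. You compute $L_p(\Cc_k,T)$ directly as the (reverse) characteristic polynomial of Frobenius on the decomposition (\ref{equation: del dec}): $\Frob_p$ permutes the lines $V_{(ka,a)}$ through the cosets of $H_f$, each size-$f$ orbit contributes $1-\lambda T^f$ with $\lambda$ the eigenvalue of $\Frob_p^f=\Frob_\p$ on any line of the orbit, namely $J_{(ka,a)}(\p)$, and Lemma \ref{lemma: Wkf} together with $H_f\subseteq W_{k,f}$ lets you gather the $(\ell-1)/f$ orbits into $W_{k,f}$-cosets, each contributing the same factor $n_{k,f}/f$ times. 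The paper instead avoids any analysis of how $G_\Q$ permutes the summands: it reads Lemma \ref{lemma: Lfunctions} locally as $L_p(\Cc_k,T)^{\ell-1}=L_\p((\Cc_k)_F,T^f)^{(\ell-1)/f}$, groups the right-hand side by Lemma \ref{lemma: Wkf} exactly as you do, and then extracts the $(\ell-1)$-th root using that a polynomial with constant term $1$ has a unique such root with constant term $1$. Your route is more structural (it explains where $T^f$ and the exponent $n_{k,f}/f$ come from), but it uses, without justification, that a lift of $\sigma_t$ maps $V_{(ka,a)}$ to $V_{(kta,ta)}$: the identities (\ref{equation: comm}) you cite concern the values of the Jacobi-sum characters, not the permutation of the summands, so strictly this should be argued from the CM/eigenspace description of the decomposition (it is true and standard); the paper's route gets the same information for free from Lemma \ref{lemma: Lfunctions}, whose proof used the Artin formalism and a point count, at the price of the slightly slick root-extraction step. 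Two small slips, neither fatal: the phrase ``product of the $J$-values around the orbit'' is misleading (taken literally it would give $J_{(ka,a)}(\p)^f$) --- the correct quantity is the eigenvalue of $\Frob_p^f$ on a single line, as you state correctly later; and the constancy of that eigenvalue along a Frobenius orbit is automatic (conjugate $\Frob_p^f$ by $\Frob_p$), so Lemma \ref{lemma: Wkf} is really only needed for the grouping by $W_{k,f}$-cosets. Finally, your argument pins down the particular $\p\mid p$ attached to the chosen Frobenius lift; one should add that replacing $\p$ by ${}^{\sigma_t}\p$ merely permutes the indices $a\in G/W_{k,f}$ (first identity of (\ref{equation: comm})), so the stated formula holds for every $\p$ over $p$, as the proposition asserts.
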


\begin{proof}
In terms of local factors, Lemma \ref{lemma: Lfunctions} states
$$
L_p(\Cc_k,T)^{\ell-1}=L_\p((\Cc_k)_F,T^f)^{\frac{\ell-1}{f}}\,.
$$
But, by Lemma \ref{lemma: Wkf}, we have
$$
L_\p((\Cc_k)_F,T^f)^{\frac{\ell-1}{f}}=\prod_{a\in G/W_{k,f}}(1-J_{(ka,a)}(\p)T^f)^{n_{k,f}\frac{\ell-1}{f}}\,.
$$
Since $L_p(\Cc_k,T)$ and $\prod_{a\in G/W_{k,f}}(1-J_{(ka,a)}(\p)T^f)^{\frac{n_{k,f}}{f}}$ have both constant term equal to $1$, the proposition follows.
\end{proof}
From the previous proposition, we can deduce an alternative proof of a lemma of \cite{GR78}.
\begin{corollary}[see Lemma 1.1. of \cite{GR78}]\label{corollary: GR}
If $p\not =\ell$ is a prime of even residue degree $f$ in $F$, we have that
$$
L_p(\Cc_k,T)=(1+p^{\frac{f}{2}}T^f)^{\frac{\ell-1}{f}}\,.
$$
\end{corollary}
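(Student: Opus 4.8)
The plan is to feed the even residue degree $f$ through Proposition~\ref{proposition: local factor}. Since $f$ is even, Lemma~\ref{lemma: cons wfk} gives $W_{k,f}=G$, hence $n_{k,f}=\ell-1$ and the index set $G/W_{k,f}$ of the product in Proposition~\ref{proposition: local factor} consists of a single point. Thus, for any prime $\p$ of $F$ above $p$,
$$
L_p(\Cc_k,T)=\bigl(1-J_{(k,1)}(\p)\,T^f\bigr)^{\frac{\ell-1}{f}}\,,
$$
and the whole statement reduces to the single identity $J_{(k,1)}(\p)=-p^{f/2}$.

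First I would check that $J:=J_{(k,1)}(\p)$ is a rational integer: it lies in $\mathcal O_F$ by construction, and since $L_p(\Cc_k,T)\in\Z[T]$ the coefficient of $T^f$ forces $J\in\Q$ (equivalently, one may invoke Lemma~\ref{lemma: Wkf}, which for $W_{k,f}=G$ says exactly that $J$ is $\Gal(F/\Q)$-invariant). Being a real algebraic integer, $J$ satisfies $J=\overline J$, so the weight-one relation $J_{(k,1)}(\p)\,J_{(k,1)}(\overline\p)=N\p$ recalled in \S\ref{section: Hecke characters} collapses to $J^2=p^f$; hence $J=\pm p^{f/2}$.

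It remains to pin down the sign, which is the only delicate point. I would reduce modulo the unique prime $(1-\zeta_\ell)$ of $F$ above $\ell$. Every $\ell$-th root of unity is congruent to $1$ modulo $(1-\zeta_\ell)$, so $\chi_\p(v)\equiv 1$ for $v\in\F_\p^{*}$ while $\chi_\p(0)=0$; therefore the two terms with $v\in\{0,-1\}$ vanish and
$$
-J=\sum_{v\in\F_\p}\chi_\p(v)^k\chi_\p(v+1)\equiv p^f-2\pmod{(1-\zeta_\ell)}\,.
$$
Now the residue degree of $\p$ being $f$ means $\ord(p)=f$ in $G$, so $p^f\equiv 1\pmod\ell$ and, because $f/2<f$, also $p^{f/2}\equiv -1\pmod\ell$. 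Intersecting with $\Z$ gives $J\equiv 1\pmod\ell$, which (using $\ell\geq 3$, so $1\not\equiv -1$) rules out $J=p^{f/2}$ and forces $J=-p^{f/2}$. Substituting back yields $L_p(\Cc_k,T)=(1+p^{f/2}T^f)^{(\ell-1)/f}$. The main obstacle is precisely this sign computation; the reduction to a single Jacobi sum and the equality $J^2=p^f$ are immediate from the results already established.
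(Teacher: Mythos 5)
Your argument is correct, and its skeleton coincides with the paper's: both use Lemma~\ref{lemma: cons wfk} to get $W_{k,f}=G$ for even $f$, feed this into Proposition~\ref{proposition: local factor} to reduce the whole statement to the single identity $J_{(k,1)}(\p)=-p^{f/2}$, and then use reality/rationality of $J_{(k,1)}(\p)$ (the paper via $-1\in W_{k,f}$, you via full Galois invariance from Lemma~\ref{lemma: Wkf}) together with the weight-one relation to conclude $J_{(k,1)}(\p)=\pm p^{f/2}$. The only divergence is how the sign is pinned down. The paper does it geometrically: it counts $\F_{p^f}$-points on the affine model $v^\ell=u(u+1)^{\ell-k-1}$, notes the count is $\equiv 3\pmod\ell$, and compares with $1+p^f-\varepsilon p^{f/2}(\ell-1)$ to force $\varepsilon=-1$. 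You instead reduce the Jacobi sum itself modulo the totally ramified prime $(1-\zeta_\ell)$: every nonzero character value is $\equiv 1$, so $-J_{(k,1)}(\p)\equiv p^f-2$, and intersecting with $\Z$ (using $(1-\zeta_\ell)\cap\Z=\ell\Z$, $p^f\equiv 1$ and $p^{f/2}\equiv-1\pmod\ell$) gives $J_{(k,1)}(\p)\equiv 1\pmod\ell$, ruling out the plus sign. These are closely related congruences modulo $\ell$ --- the paper's point count is, in effect, the same character-sum congruence summed over all $a\in G$ --- but your version is a bit more self-contained, working directly from the definition of $J_{(k,1)}$ and the standard fact that Jacobi sums are $\equiv -1$ modulo $(1-\zeta_\ell)$, whereas the paper's is more transparently geometric. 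Both arguments are complete; your steps (the count of $p^f-2$ nonzero terms, the identification $(1-\zeta_\ell)\cap\Z=\ell\Z$, and the use of $\ell\geq 3$ to separate $\pm1$) all check out.
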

\begin{proof}By Lemma \ref{lemma: cons wfk}, if $f$ is even, then $W_{k,f}=G$. Therefore, by Proposition \ref{proposition: local factor}, it suffices to show that $J_{(k,1)}(\p)=-p^{\frac{f}{2}}$. Since $-1\in H_f\subseteq W_{k,f}$, we have that $J_{(k,1)}(\p)$ is fixed by complex conjugation and thus real. Then $||J_{(k,1)}(\p)||=p^f$, leaves the two possibilities $J_{(k,1)}(\p)=\varepsilon p^{\frac{f}{2}}$, with $\varepsilon=\pm 1$. To solve the ambiguity, we will compute the number of $\F_{p^f}$-rational points of 
$$
\Cc_k\colon v^\ell=u(u+1)^{\ell-k-1}\,.
$$
One the one hand, this number is
$$
|\Cc_k(\F_{p^f})|=1+p^f-\varepsilon p^{\frac{f}{2}}(\ell-1)\,.
$$ 
On the other hand, the number of $\F_{p^f}$-rational points is $\equiv 3 \pmod \ell$. Indeed for any $u_0\not=0,-1$ there are $\ell$ possibilities for an affine point of the form $(u_0,v)$, there are the two affine points $(0,0)$ and $(-1,0)$, and there is the point at infinity. Since $p^{\frac{f}{2}}\equiv -1 \pmod \ell$, we obtain
$$
3\equiv 2-\varepsilon \pmod\ell\,,
$$
which yields $\varepsilon=-1$.
\end{proof}

Recall that $F_{k,f}$ is the subfield of $F$ fixed by $W_{k,f}$. Let $I_{F_{k,f}}(\ell)$ denote the group of fractional ideals of $F_{k,f}$ that are coprime to $\ell$. Consider the map
$$
\Psi_{(ka,a),f}\colon I_{F_{k,f}}(\ell)\rightarrow F_{k,f}^*\,,\qquad \Psi_{(ka,a),f}(\Pp):=\prod_{w\in W_{k,f}}J_{(wka,wa)}(\Pp\mathcal O_F)\,.
$$

\begin{proposition}\label{proposition: low gros} The homomorphism $\Psi_{(ka,a),f}$ is a Gr\"ossencharacter of infinity type 
$$\sum_{j\in G/W_{k,f}}\frac{n_{k,f}^2}{f}E_{k,f}(-aj^{-1})[j]\in\Z[G/W_{k,f}]
$$ 
and weight $n_{k,f}^2$. If $n_{k,f}>1$, the conductor of $\Psi_{(ka,a),f}$ is $\mathfrak l$, where $\mathfrak l$ is the prime of $F_{k,f}$ lying above $\ell$. For every prime $\Pp\in I_{F_{k,f}}(\ell)$, one has
\begin{equation}\label{equation: facil}
\Psi_{(ka,a),f}(\Pp)\mathcal O_{F_{k,f}}=\prod_{t\in G/W_{k,f}}{}^{\sigma_t}(\Pp)^{\frac{n_{k,f}^2}{f}E_{k,f}(-at^{-1})}\,.
\end{equation}
Moreover, for every prime $\p\in I_{F}(\ell)$ of residue degree $f$ and $\Pp:=\p \cap \mathcal O_{F_{k,f}}$, one has $$\Psi_{(ka,a),f}(\Pp)=J_{(ka,a)}(\p)^{\frac{n_{k,f}^ 2}{f}}\,.$$
\end{proposition}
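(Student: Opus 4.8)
The plan is to reduce every assertion to combinatorial identities in the group ring $\Z[G]$, using the three relations of (\ref{equation: comm}), the infinity-type formula (\ref{equation: inf type}) for the individual Jacobi-sum characters $J_{(wka,wa)}$, Lemma \ref{lemma: Wkf}, and the two structural facts about $W_{k,f}$ recorded earlier, namely that $H_f\subseteq W_{k,f}$ and that $E_{k,f}$ is $W_{k,f}$-invariant with $E_{k,f}(-a)=f-E_{k,f}(a)$. The first step is to check that $\Psi_{(ka,a),f}$ takes values in $F_{k,f}^*$ and not merely in $F^*$: for $v\in W_{k,f}$, the second relation of (\ref{equation: comm}) gives ${}^{\sigma_v}(J_{(wka,wa)}(\cdot))=J_{(vwka,vwa)}(\cdot)$, and since $v\mapsto vw$ permutes $W_{k,f}$, the product defining $\Psi_{(ka,a),f}(\Pp)$ is fixed by every $\sigma_v$ with $v\in W_{k,f}$, hence lies in $F^{W_{k,f}}=F_{k,f}$.

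For the infinity type I would insert (\ref{equation: inf type}) into the definition: for $\alpha\in F_{k,f}^*$ with $\alpha\equiv^\times 1$ modulo a high enough power of $\mathfrak{l}$ (so that $\alpha\equiv^\times 1\pmod{\ell^2\mathcal O_F}$ in $F$), each factor $J_{(wka,wa)}(\alpha\mathcal O_F)$ equals $\prod_{j\in M_k}{}^{\sigma_{waj^{-1}}}(\alpha)$. Since $\alpha\in F_{k,f}$, the value ${}^{\sigma_{waj^{-1}}}(\alpha)$ depends only on the class of $aj^{-1}$ in $G/W_{k,f}$, which lets one collapse the product over $w\in W_{k,f}$ and then count multiplicities: for $t\in G/W_{k,f}$ the number of $j\in M_k$ with $aj^{-1}\in tW_{k,f}$ is $|M_k\cap at^{-1}W_{k,f}|$, and splitting $W_{k,f}$ into $H_f$-cosets and invoking the $W_{k,f}$-invariance of $E_{k,f}$ evaluates this cardinality as $\tfrac{n_{k,f}}{f}E_{k,f}(-at^{-1})$. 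Assembling the pieces yields $\Psi_{(ka,a),f}(\alpha\mathcal O_{F_{k,f}})=\prod_{t\in G/W_{k,f}}({}^{\sigma_t}\alpha)^{\frac{n_{k,f}^2}{f}E_{k,f}(-at^{-1})}$, which is the claimed infinity type; the weight $n_{k,f}^2$ falls out of $J_{(wka,wa)}(\mathfrak a)\overline{J_{(wka,wa)}(\mathfrak a)}=N_{F/\Q}\mathfrak a$ together with transitivity of the norm $N_{F/\Q}=N_{F_{k,f}/\Q}\circ N_{F/F_{k,f}}$ and $[F:F_{k,f}]=n_{k,f}$.

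For the identity $\Psi_{(ka,a),f}(\Pp)=J_{(ka,a)}(\p)^{n_{k,f}^2/f}$ and for (\ref{equation: facil}), the key observation is that a prime $\p$ of $F$ of residue degree $f$ over $\Q$ has decomposition group the unique order-$f$ subgroup $H_f$ of the cyclic group $G$, and since $H_f\subseteq W_{k,f}$ this forces $\p$ to have residue degree $f$ over $F_{k,f}$; hence $\Pp=\p\cap\mathcal O_{F_{k,f}}$ splits as $\Pp\mathcal O_F=\prod_{w\in W_{k,f}/H_f}{}^{\sigma_w}\p$. Substituting this into the definition of $\Psi_{(ka,a),f}(\Pp)$ and repeatedly using the first two relations of (\ref{equation: comm}) to move Galois actions across, the product telescopes to $\big(\prod_{u\in W_{k,f}}{}^{\sigma_u}(J_{(ka,a)}(\p))\big)^{n_{k,f}/f}$; by Lemma \ref{lemma: Wkf} each factor ${}^{\sigma_u}(J_{(ka,a)}(\p))$ with $u\in W_{k,f}$ equals $J_{(ka,a)}(\p)$, giving $J_{(ka,a)}(\p)^{n_{k,f}^2/f}$. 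Then (\ref{equation: facil}) follows by applying the third relation of (\ref{equation: comm}) in the form $J_{(ka,a)}(\p)\mathcal O_F=\prod_{t\in G/H_f}({}^{\sigma_t}\p)^{E_{k,f}(-at^{-1})}$, raising to the $n_{k,f}^2/f$ power, and regrouping the primes ${}^{\sigma_t}\p$ into their $F_{k,f}$-orbits, which is legitimate because $E_{k,f}(-at^{-1})$ is constant on $W_{k,f}$-cosets of $t$.

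The delicate point, and the one I expect to be the main obstacle, is the conductor computation when $n_{k,f}>1$. From Hasse's determination of the conductors of the individual $J_{(wka,wa)}$ (divisors of $(1-\zeta_\ell)^2\mathcal O_F$) one gets readily that the conductor of $\Psi_{(ka,a),f}$ is a power of the totally ramified prime $\mathfrak{l}\mid\ell$ and is at worst $\mathfrak{l}^2$; the content is to pin it down to exactly $\mathfrak{l}$. I would do this by a local analysis at $\mathfrak{l}$: to show the conductor divides $\mathfrak{l}$ one checks that $\Psi_{(ka,a),f}$ is tamely ramified there, exploiting that every exponent in the infinity type is divisible by $n_{k,f}$ so that the potential $\mathfrak{l}^2$-contribution of the $J$'s becomes an $n_{k,f}$-th power and dies in the relevant quotient of local units; and to show the conductor is nontrivial one argues that an everywhere-unramified Gr\"ossencharakter of this nonzero infinity type cannot exist (it would force $\prod_t{}^{\sigma_t}(u)^{e_t}=1$ for every unit $u$ of $F_{k,f}$, which fails). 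Making the divisibility half of this precise is the crux; alternatively one can appeal to the analysis of Jacobi-sum Gr\"ossencharaktere on subfields of $F$ carried out in \cite{Gon99}.
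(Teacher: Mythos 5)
Your treatment of the infinity type, the weight, and the identity $\Psi_{(ka,a),f}(\Pp)=J_{(ka,a)}(\p)^{n_{k,f}^2/f}$ for $\p$ of residue degree $f$ is correct and is essentially the paper's own computation: plug (\ref{equation: inf type}) into the definition, use that $\alpha\in F_{k,f}$ and that $E_{k,f}$ is $W_{k,f}$-invariant to collapse the product from $G$ to $G/W_{k,f}$, and count multiplicities as $\tfrac{n_{k,f}}{f}E_{k,f}(-at^{-1})$; your coset counts, the preliminary check that $\Psi_{(ka,a),f}$ takes values in $F_{k,f}^*$, and the use of Lemma \ref{lemma: Wkf} are all fine. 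There are, however, two genuine gaps.

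First, your derivation of (\ref{equation: facil}) starts from $\Psi_{(ka,a),f}(\Pp)=J_{(ka,a)}(\p)^{n_{k,f}^2/f}$ and from the decomposition $J_{(ka,a)}(\p)\mathcal O_F=\prod_{t\in G/H_f}({}^{\sigma_t}\p)^{E_{k,f}(-at^{-1})}$, both of which presuppose that $\Pp$ lies under a prime $\p$ of $F$ of residue degree $f$ over $\Q$, so that the decomposition group of $\p$ is exactly $H_f$. But (\ref{equation: facil}) is asserted, and later needed, for \emph{every} prime $\Pp\in I_{F_{k,f}}(\ell)$: in the proof of Theorem \ref{theorem: equidist f} it is applied to primes with $\Frob_\Pp=1$ in $\Gal(F/F_{k,f})$, which for $f>1$ are not of the special type your argument covers. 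The repair is the ideal-theoretic analogue of your infinity-type computation: by multiplicativity and the third relation of (\ref{equation: comm}), $\Psi_{(ka,a),f}(\Pp)\mathcal O_F=\prod_{w\in W_{k,f}}\prod_{j\in M_k}{}^{\sigma_{waj^{-1}}}(\Pp\mathcal O_F)$, and since $\Pp\mathcal O_F$ is $W_{k,f}$-stable and the resulting exponent $\tfrac{n_{k,f}}{f}E_{k,f}(-at^{-1})$ is $W_{k,f}$-invariant, one regroups exactly as before and contracts to $F_{k,f}$; no hypothesis on the residue degree of $\Pp$ is needed.

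Second, the conductor claim is left unproved: you declare the divisibility half ("conductor divides $\mathfrak l$") the crux and only sketch a local-units/tame-ramification argument, or defer to \cite{Gon99}. The paper's argument is a one-liner you miss: Hasse gives $(1-\zeta_\ell)^2\mathcal O_F$ as a modulus for each $J_{(wka,wa)}$, and $\mathfrak l\mathcal O_F=(1-\zeta_\ell)^{n_{k,f}}\mathcal O_F$; hence for $n_{k,f}>1$ any $\alpha\in F_{k,f}^*$ with $\alpha\equiv^{\times}1\pmod{\mathfrak l}$ satisfies $\alpha\equiv^{\times}1\pmod{(1-\zeta_\ell)^2}$ in $F$, so the defining formula applies and $\mathfrak l$ is already a modulus for $\Psi_{(ka,a),f}$. (This is also all the paper itself proves; the exactness of the conductor, which is the half you worry about with the unit argument, is never used afterwards, so it is not the crux for the application.)
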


\begin{proof} For $\alpha\in F_{k,f}^*$ such that $\alpha\equiv^{\times} 1 \pmod{\ell^2}$, by (\ref{equation: inf type}), we have
$$
\begin{array}{lll}
\Psi_{(ka,a),f}(\alpha\mathcal O_{F_{k,f}})&=&\displaystyle{\prod_{w\in W_{k,f}}\prod_{t\in G}{}^{\sigma_t}(\alpha)^{E_k(-wat^{-1})}}\\[4pt]
&=&\displaystyle{ \prod_{t\in G}{}^{\sigma_t}(\alpha)^{\frac{n_{k,f}}{f}E_{k,f}(-at^{-1})}}\\[4pt]
&=&\displaystyle{\prod_{t\in G/W_{k,f}}{}^{\sigma_t}(\alpha)^{\frac{n_{k,f}^2}{f}E_{k,f}(-at^{-1})}}\,.
\end{array}
$$
Equality (\ref{equation: facil}) follows from a similar calculation. To compute the weight, use that we have that $\frac{n_{k,f}^2}{f}E_{k,f}(a)+\frac{n_{k,f}^2}{f}E_{k,f}(-a)=n_{k,f}^2$. If $\p\in I_{F}(\ell)$ has residue degree $f$ and $\Pp:=\p \cap \mathcal O_{F_{k,f}}$, then by definition of $W_{k,f}$, we have
$$
\Psi_{(ka,a),f}(\Pp)=\prod_{w\in W_{k,f}}{}^{\sigma_w}J_{(ka,a)}(\Pp\mathcal O_F)=\prod_{w\in W_{k,f}}J_{(ka,a)}(\p)^{\frac{n_{k,f}}{f}}=J_{(ka,a)}(\p)^{\frac{n_{k,f}^2}{f}}\,.
$$
Since $(1-\zeta_\ell)^2\mathcal O_F$ is a modulus for $J_{(ka,a)}$ (see \S\ref{section: Hecke characters}) and $\mathfrak l\mathcal O_F=(1-\zeta_\ell)^{n_{k,f}}\mathcal O_F$, we have that $\mathfrak l$ is a modulus for $\Psi_{(ka,a),f}$ if $n_{k,f}>1$.
\end{proof}

\begin{theorem}\label{theorem: equidist f} Let $f$ be an (odd) divisor of $\ell-1$ such that $\det(D_{k,f})\not =0$. Let $\{\p_i\}_{i\geq 1}$ be an ordering by norm of the primes of $F$ of residue degree $f$. Set
\begin{equation}\label{equation: seq J}
v_{\p_i} = \left(\frac{J_{(ka_1,a_1)}(\p_i)}{\sqrt{N\p_i}},\dots,\frac{J_{(ka_{r_{k,f}},a_{r_{k,f}})}(\p_i)}{\sqrt{N\p_i}} \right) \,,
\end{equation}
where $a_1,\dots, a_{r_{k,f}}$ is a system of representatives of $M_k/W_{k,f}$. Then the sequence $\left\{ v_{\p_i}\right\}_{i\geq 1}$ is equidistributed over $\Unitary(1)\times\stackrel{r_{k,f}}\ldots \times \Unitary(1)$.
\end{theorem}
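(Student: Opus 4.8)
The plan is to deduce the statement from Serre's equidistribution criterion (Theorem~\ref{theorem: Serre}), applied to the compact abelian group $\mathcal G:=\Unitary(1)\times\stackrel{r_{k,f}}\ldots\times\Unitary(1)$. Since $\mathcal G$ is abelian, its set of conjugacy classes is $\mathcal G$ itself and, as in~(\ref{equation: irred rep}), its nontrivial irreducible characters are exactly the maps $\phi_{\vec b}(z_1,\dots,z_{r_{k,f}})=\prod_{i}z_i^{b_i}$ with $\vec b=(b_1,\dots,b_{r_{k,f}})\in\Z^{r_{k,f}}\setminus\{0\}$. Let $P$ denote the set of primes $\p$ of $F$ of residue degree $f$; by Chebotarev $P$ is infinite (of positive density). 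Thus, to obtain equidistribution of $\{v_{\p_i}\}$ over $\mathcal G$, it suffices by Theorem~\ref{theorem: Serre} to prove that for every $\vec b\neq 0$ the Euler product
$$
L_{\mathcal A}(\phi_{\vec b},s)=\prod_{\p\in P}\Bigl(1-\prod_{i=1}^{r_{k,f}}\Bigl(\tfrac{J_{(ka_i,a_i)}(\p)}{\sqrt{N\p}}\Bigr)^{b_i}N\p^{-s}\Bigr)^{-1}
$$
is holomorphic and nonvanishing on $\Re(s)\geq 1$.

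The core of the argument is to recognize $L_{\mathcal A}(\phi_{\vec b},s)$, up to finitely many local factors and up to a factor holomorphic and nonvanishing on $\Re(s)\geq 1$, as a power of the $L$-function of a unitarized Gr\"ossencharakter of $F_{k,f}=F^{W_{k,f}}$. By Lemma~\ref{lemma: Wkf}, for $\p\in P$ the value $J_{(ka_i,a_i)}(\p)$ depends only on $\Pp:=\p\cap\mathcal O_{F_{k,f}}$, and Proposition~\ref{proposition: low gros} organizes these values into the Gr\"ossencharakter $\Psi_{(ka_i,a_i),f}$ of $F_{k,f}$, with $\Psi_{(ka_i,a_i),f}(\Pp)=J_{(ka_i,a_i)}(\p)^{n_{k,f}^2/f}$ and $N\Pp=p$, $p$ the rational prime below $\p$. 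After taking logarithms and discarding the terms with $m\geq 2$ (convergent for $\Re(s)>\tfrac1{2f}$) together with the contribution of the primes of $F_{k,f}$ of degree $>1$ over $\Q$ (convergent for $\Re(s)>\tfrac12$), one is reduced to a Dirichlet series over the degree-one primes $\Pp$ of $F_{k,f}$. Finally, the requirement that the primes of $F$ above $\Pp$ have residue degree exactly $f$ is a condition on $\Frob_p\in\Gal(F/F_{k,f})=W_{k,f}$; it is isolated by inclusion--exclusion over the divisors of $f$ and, for each divisor, by orthogonality of the characters of $W_{k,f}$. Composing such a character $\psi$ with the Artin map twists the relevant Gr\"ossencharakter by a finite-order Hecke character $\widetilde\psi$ of $F_{k,f}$, leaving a Dirichlet series equal, up to a holomorphic nonvanishing factor, to $\log L(\widetilde\psi\cdot\Lambda_{\vec b},s)$ for a unitarized Gr\"ossencharakter $\Lambda_{\vec b}$ of $F_{k,f}$ whose infinity type is proportional to that of $\prod_i\Psi_{(ka_i,a_i),f}^{b_i}$. (When $n_{k,f}=3f$, the three primes of $F$ above $\Pp$ yield the same $J_{(ka_i,a_i)}(\p)$ by Lemma~\ref{lemma: Wkf}, so these local factors appear as a cube; this only affects the bookkeeping and the power to which the $L$-function is raised.) By Hecke's theorem~\cite{Hec20}, it then suffices to check that every $\widetilde\psi\cdot\Lambda_{\vec b}$ that arises is nontrivial.

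This is where $\det(D_{k,f})\neq 0$ is used, and the argument runs parallel to the end of the proof of Theorem~\ref{theorem: ST F}. Assume $\widetilde\psi\cdot\Lambda_{\vec b}$ were trivial. As $\widetilde\psi$ has finite order, its infinity type vanishes, so $\Lambda_{\vec b}$ is of finite order; equivalently, the infinity type of $\prod_i\Psi_{(ka_i,a_i),f}^{b_i}$ is the constant equal to half of its weight. Reading this infinity type from the prime factorization~(\ref{equation: facil}), $\Psi_{(ka_i,a_i),f}(\Pp)\mathcal O_{F_{k,f}}=\prod_{t\in G/W_{k,f}}({}^{\sigma_t}\Pp)^{\frac{n_{k,f}^2}{f}E_{k,f}(-a_it^{-1})}$, and using that $\Psi_{(ka_i,a_i),f}$ has weight $n_{k,f}^2$, its constancy amounts to
$$
\sum_{i=1}^{r_{k,f}}b_i\Bigl(E_{k,f}(-a_it^{-1})-\tfrac f2\Bigr)=0\qquad\text{for every }t\in G\,.
$$
This says precisely that $\vec b$ lies in the kernel of $D_{k,f}=\bigl(E_{k,f}(-c^{-1}a)-\tfrac f2\bigr)_{c,a\in M_k/W_{k,f}}$, contradicting $\det(D_{k,f})\neq 0$ since $\vec b\neq 0$. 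Hence all the $\widetilde\psi\cdot\Lambda_{\vec b}$ are nontrivial, $L_{\mathcal A}(\phi_{\vec b},s)$ is holomorphic and nonvanishing on $\Re(s)\geq 1$, and Theorem~\ref{theorem: Serre} yields the claim.

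The step I expect to be the main obstacle is the middle one: converting the partial Euler product over primes of $F$ of a fixed residue degree $f$ into an honest Hecke $L$-function over $F_{k,f}$. This requires careful handling of (i) the descent $\p\rightsquigarrow\Pp$ and the exponent $n_{k,f}^2/f$ relating $\Psi_{(ka,a),f}(\Pp)$ and $J_{(ka,a)}(\p)$; (ii) the character-theoretic isolation of the residue-degree-$f$ locus among the degree-one primes of $F_{k,f}$; and (iii) the case $n_{k,f}=3f$, in which a single prime of $F_{k,f}$ contributes three equal local factors of $\Cc_k$ and a cube must be disentangled at the level of the product, not of the individual Euler factors. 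Once this is done, the nonvanishing input from $\det(D_{k,f})\neq 0$ combined with Hecke's theorem is entirely parallel to Theorem~\ref{theorem: ST F}.
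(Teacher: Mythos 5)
Your route is essentially the paper's: descend from $F$ to $F_{k,f}$ through Proposition \ref{proposition: low gros}, encode the residue-degree condition by finite-order characters of $\Gal(F/F_{k,f})$ via Artin reciprocity, apply Hecke's theorem, and obtain nontriviality from $\det(D_{k,f})\neq 0$; your infinity-type computation amounts to the same linear condition $\sum_i b_i\bigl(E_{k,f}(-a_it^{-1})-\tfrac f2\bigr)=0$ that the paper extracts from (\ref{equation: facil}). The only organizational difference is that the paper avoids your inclusion--exclusion inside the Euler product: it proves a single joint equidistribution statement, over \emph{all} primes $\Pp$ of $F_{k,f}$, for the pairs $\bigl(\Psi_{(ka_i,a_i),f}(\Pp)/N\Pp^{n_{k,f}^2/2},\Frob_\Pp\bigr)$ on the compact group $\Unitary(1)\times\cdots\times\Unitary(1)\times\Gal(F/F_{k,f})$, and then restricts to the fibre where $\Frob_\Pp$ generates $H_f$; this is a cleaner packaging of your point (ii), but it is the same mechanism.

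The genuine gap is your point (i), which you flag but do not resolve: the character $\Lambda_{\vec b}$ is never constructed, and prescribing its infinity type ``up to proportionality'' does not determine its values at the primes $\Pp$. What Proposition \ref{proposition: low gros} actually provides is $\Psi_{(ka_i,a_i),f}(\Pp)=J_{(ka_i,a_i)}(\p)^{m}$ with $m=n_{k,f}^2/f$ (equal to $f$ or $9f$, hence $m>1$ except when $f=1$ and $n_k=1$), so the Hecke characters of $F_{k,f}$ at your disposal, namely finite-order twists of $\prod_i\Psi_{(ka_i,a_i),f}^{b_i}$, have unitarized value $\prod_i\bigl(J_{(ka_i,a_i)}(\p)/\sqrt{N\p}\bigr)^{mb_i}$ at $\Pp$. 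Hence your argument verifies Serre's criterion only for the characters $\phi_{\vec b}$ with $\vec b\in m\Z^{r_{k,f}}$, i.e.\ it proves equidistribution of the componentwise $m$-th powers of $v_{\p_i}$, which is strictly weaker than the theorem: a sequence confined to an arc of length $2\pi/m$ has equidistributed $m$-th powers. Note that this is exactly the passage the paper treats most briefly: its proof establishes the power statement (via the claim about the pairs above) and returns to the original $v_{\p_i}$ only by asserting at the outset that the statement for these powers is equivalent to the theorem. To close your write-up you must either justify that reduction or interpolate the values themselves by a character of $F_{k,f}$; for instance, $\Pp\mapsto J_{(ka,a)}(\Pp\mathcal O_F)$ is a Gr\"ossencharakter of $F_{k,f}$ whose value at the relevant $\Pp$ is $J_{(ka,a)}(\p)^{n_{k,f}/f}$ (by Lemma \ref{lemma: Wkf}), which eliminates the discrepancy when $W_{k,f}=H_f$ but still leaves a cube when $W_{k,f}=H_{3f}$.
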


\begin{proof} Observe that replacing $v_{\p_i}$ by
$$
v'_{\p_i}=\left(\frac{J_{(ka_1,a_1)}(\p_i)^{\frac{n_{k,f}^2}{f}}}{\sqrt{N\p_i}^{\frac{n_{k, f}^2}{f}}},\dots,\frac{J_{(ka_{r_{k,f}},a_{r_{k,f}})}(\p_i)^ {\frac{n_{k, f}^ 2}{f}}}{\sqrt{N\p_i}^{\frac{n_{k, f}^2}{f}}} \right)
$$
in the statement of the theorem yields an equivalent statement. Set $\Pp_i:=\p_i \cap \mathcal O_{ F_{k,f}}$. By Proposition~\ref{proposition: low gros}, the tuple $v'_{\p_i}$ is exactly
$$
\left(\frac{\Psi_{(ka_1,a_1),f}(\Pp_i)}{\sqrt{N\Pp_i}^ { n_{k,f}^ 2}},\dots,\frac{\Psi_{(ka_{r_{k,f}},a_{r_{k,f}}),f}(\Pp_i)}{\sqrt{N\Pp_i}^{ n_{k,f}^ 2}} \right)\,.
$$
Let us now make a change of notation: $\{ \Pp_i\}_{i\geq 1}$ is an ordering by norm of \emph{all} of the primes of $F_{k,f}$. Let $\Frob_{\Pp_i}\in \Gal(F/F_{k,f})$ be the relative Frobenius at $\Pp_i$. Then the theorem follows from the claim that the sequence
$$
\left\{\left(\frac{\Psi_{(ka_1,a_1),f}(\Pp_i)}{\sqrt{N\Pp_i}^ {n_{k,f}^2}},\dots,\frac{\Psi_{(ka_{r_{k,f}},a_{r_{k,f}}),f}(\Pp_i)}{\sqrt{N\Pp_i}^{n_{k,f}^ 2}},\Frob_{\Pp_i} \right)\right\}_{i\geq 1}
$$
is equidistributed over $\Unitary(1)\times\stackrel{r_{k,f}}\ldots \times \Unitary(1)\times \Gal(F/F_{k,f})$. For $(b_1,\dots,b_{r_{k,f}})\in \Z^{r_{k,f}}$ and $\chi\in \Gal(F/F_{k,r})$, let us write
$$
\Psi:=\Psi_{(ka_1,a_1),f}^{b_1}\cdot\dots\cdot \Psi_{(ka_{r_{k,f}},a_{r_{k,f}}),f}^{b_{r_{k,f}}}\chi\,.
$$
By Theorem \ref{theorem: Serre}, we have to show that
$$
\prod_{i\geq 1}\left(1-\frac{\Psi(\Pp_i)}{\sqrt {N\Pp_i}^{(b_1+\dots+b_{r_{k,f}})n_{k,f}^ 2}}N\Pp_i^{-s  }\right)^{-1}
$$
is holomorphic and nonvanishing for $\Re(s)\geq 1$, unless $\chi$ is trivial and all the $b_i$'s are zero. Artin reciprocity guarantees the existence of an ideal $\mathfrak c$ of $F_{k,f}$ (divisible by precisely the primes that ramify in $F$) such that for all $\alpha\in F_{k,f}^*$ satisfying $\alpha\equiv^{\times} 1 \pmod {\mathfrak c}$, $\chi(\alpha\mathcal O_{F_{k,f}})=1$. This means that $\Psi$ is again a Gr\"ossencharacter\footnote{At this point it becomes apparent the importance of Proposition \ref{proposition: low gros}: it permits to reduce an equidistribution problem about eigenvalues of Gr\"ossencharacters of the field $F$ to an equidistribution problem about eigenvalues of Gr\"ossencharacters of the field $F_{k,f}$, where the primes of residue degree $f$ can be detected by means of another Gr\"ossencharacter.}, and by Hecke's result all we have to check is that it is not trivial unless $\chi$ is trivial and all the $b_i$'s are zero. 
Suppose that $\Psi$ is trivial and write $B:=(b_1+\dots+b_{r_{k,f}}) n_{k,f}^ 2/2$. Then, reindexing the $b_j$'s on the set $ M_k/W_{k,f}$, for every prime $\Pp$ of $\mathcal O_{F_{k,f}}$ with $\Frob_\Pp=1$, equation (\ref{equation: facil}) implies 
\begin{equation}
\begin{array}{lll}
\mathcal O_{F_{k,f}}=\Psi(\Pp)\mathcal O_{F_{k,f}}&=&\displaystyle{\prod_{j\in M_k/W_{k,f}}\prod_{t\in G/W_{k,f}}{}^{\sigma_t}(\Pp)^{b_j\frac{n_{k,f}^2}{f}E_{k,f}(-t^{-1}j)-B}}\,.
\end{array}
\end{equation}
It follows from Artin's Theorem (applied to the set of characters on the monoid generated by prime ideals $\Pp$ of $\mathcal O_{F_{k,f}}$ with $\Frob_\Pp=1$) that for every $t\in G/W_{k,f}$, one has
$$
\sum_{j\in M_k/W_{k,f}}b_j E_{k,f}(-t^{-1}j)-\frac{B f}{n_{k,f}^ 2}=0\,.
$$
Since $\det(D_{k,f})\not=0$, all the $b_j$'s are zero. Then $\Psi=\chi$ and also $\chi$ is trivial.
\end{proof}

We can now deduce Theorem \ref{theorem: intro}. 

\begin{proof}[Proof of Theorem \ref{theorem: intro}] 
Let $\{p_i \}_{i\geq 1}$ be the ordering by size of the rational primes of residue degree $f$ in $F$, and define the assignment 
$$
\{ p_i\}_{i\geq 1}\rightarrow \Unitary(1)\times\stackrel{r_{k,f}}\ldots \times \Unitary(1)\stackrel{\mathrm{proj}}\longrightarrow [-2,2]^{r_{k,f}}/\mathfrak{S}_{r_{k,f}}\,\qquad 
p_i\mapsto v_ {\p_i}\mapsto (s_1(\p_i),\dots, s_{r_{k,f}}(\p_i))\,,
$$
where $v_{\p_i}$ is as in (\ref{equation: seq J}), $\p_i$ is any prime of $F$ lying over $p_i$, and $s_{j}(p_i)=-\frac{J_{(ka_j,a_j)}(\p_i)}{\sqrt{N\p_i}}- \frac{J_{(ka_j,a_j)}(\overline\p_i)}{\sqrt{N\p_i}}$. Note that this assignment is independent of the choice of $\p_i$ over $p_i$.
By Proposition \ref{proposition: local factor}, we have 
$$
L_{p_i}(\Cc_k,T/\sqrt {p_i})=\prod_{j=1}^{r_{k,f}}(1+s_{j}(p_i)T^f+T^{2f})^{\frac{n_{k,f}}{f}}\,.
$$
Observe that the image of the Haar measure of $\Unitary(1)\times\stackrel{r_{k,f}}\ldots \times \Unitary(1)$ on $[-2,2]^{r_{k,f}}/\mathfrak{S}_{r_{k,f}}$ by the map $\mathrm{proj}$ is $
\prod_{i=1}^{r_{k,f}}\frac{1}{\pi}\frac{dx_i}{\sqrt{4-x_i^2}}$.
One concludes by applying Theorem \ref{theorem: equidist f}.
\end{proof}

\subsection{Explicit distributions}\label{section: explicit dist}

For every $p\nmid \ell$, define $a_{i}(p)$, for $i=0,\dots, {\ell-1}$, to be the $i$th coefficient of the \emph{normalized} local factor of $\Cc_k$ at $p$
$$
L_p(\Cc_k,T/\sqrt p):=\sum_{i=0}^{\ell-1}a_{i}(p) T^ i\,.
$$
Note that $a_{i}(p)\in I_i:=\left[ -\binom{\ell-1 }{i}, \binom{\ell-1}{i}\right]$ and that $a_{i}(p)=a_{\ell-1-i}(p)$.
Let $a_i$ denote the sequence $\{a_i(p)\}_{p\nmid \ell}$, where the primes are ordered by size. In this section, from Theorem \ref{theorem: intro}, we will describe how to compute, in the case that $(\ell,k)$ is non-degenerate, the measure~$\mu_i$ over~$I_i$ with respect to which $a_i$ is equidistributed for $i=0,\dots, \frac{\ell-1}{2}$.
Let $a_{i,f}$ be the subsequence of $a_i$ made up of those $a_i(p)$ such that $p$ has residue degree $f$ in $F$. Denote by $\mu_{i,f}$ the measure over $I_i$ with respect to which $a_{i,f}$ is equidistributed. Then 
$$
\mu_i=\sum_{1\leq f|\ell-1} \frac{\varphi( f)}{\ell-1}\mu_{i,f} \quad \text{and}\quad
\M_n[\mu_i]=\sum_{1\leq f|\ell-1} \frac{\varphi( f)}{\ell-1}M_n[\mu_{i,f}]\,,
$$
where $\varphi$ is the Euler function, and $\M_n[\mu_i]$ (resp. $\M_n[\mu_{i,f}]$) stands for the $n$th moment of $\mu_i$ (resp. $\mu_{i,f}$).
We now show how to compute $\mu_{i,f}$ and $\M_n[\mu_{i,f}]$ when $f$ is even or non-degenerate (this covers the case $(\ell,k)$ non-degenerate).

\textbf{Case $f$ even.} For $p\not =\ell$ with residue degree $f$ in $F$, Corollary \ref{corollary: GR} implies that if $i\equiv 0\pmod f$, then
$$
a_i(p)=\binom{\frac{\ell-1}{f}}{\frac{\ell-1-i}{f} }\qquad \text{and thus}\qquad \M_n[\mu_{i,f}]=\binom{\frac{\ell-1}{f}}{\frac{\ell-1-i}{f} }^n\,.
\text{}
$$
If $i\not\equiv 0\pmod f$, then $a_i(p)=0$ and $\M_n[\mu_{i,f}]=0$. Note that this is independent of $(k,\ell)$ being degenerate or not.

\textbf{Case $f$ non-degenerate.} Using Theorem \ref{theorem: intro}, it is a straightforward computation to obtain the first moments $\M_n[\mu_{i,f}]$. In Table \ref{table: moments} in \S\ref{section: numerical data}, we have listed the first moments $\M_n[\mu_i]$ for some non-degenerate pairs $(\ell,k)$.

\section{Examples and numerical data}\label{section: numerical data}

\subsection{Examples}

On Table \ref{table: ST groups} we show $\ST(\Jac(\Cc_k))$ for several non-degenerate pairs $(\ell,k)$. We also write the set $M_k$, the subgroup $W_k$ of $G$, and a generator $g$ of $G$. We use the following notations. We denote by $U$ a random element in the connected component $\ST(\Jac(\Cc_k))^0$ and by $\gamma$ a generator of the group of components. For $u_i\in \Unitary(1)$, write
$$
U_i:=\begin{pmatrix}
u_i & 0\\
0 & \overline u_i
\end{pmatrix}\,.
$$
We denote by $P_{\gamma^i}(T)$ the characteristic polynomial of a random element in $\ST(\Jac(\Cc_k))^0\gamma^i$, and write $s_i:=u_i+\overline u_i$. For every nontrivial divisor $d$ of $\ell-1$, on Table \ref{table: ST groups} we show $P_{\gamma^i}(T)$, for some $\gamma^i$ having order $d$. For the case $d=1$, it is clear that
$$
P_{\gamma^{\ell-1}}(T)=\left( \prod_{i=1}^{r_k}(T^2+s_iT+1)\right)^{n_k}\,.
$$

\subsection{Numerical data}

Recall the notation of \S\ref{section: explicit dist}, for which $a_1$ denotes the sequence of normalized Frobenius traces of $\Jac(\Cc_k)$ and $\mu_1$ the measure on $I_1=\left[-\binom{\ell-1}{2}, \binom{\ell-1}{2}\right]$ attached to it. For $x\gg 0$, let
$$
M_{n,x}:=\frac{1}{\pi(x)}\sum_{p\leq x}\left(\frac{|\Cc_k(\F_p)|-p-1}{\sqrt p}\right)^n\,,
$$
where $\pi(x)$ denotes the number of primes of good reduction $p$ for $\Cc_k$ such that $p\leq x$. Theorem~\ref{theorem: ST F} implies that, for every $n\geq 0$, we have
$$
\M_n[\mu_{1}]=\lim_{x\to\infty} M_{n,x}\,.
$$
On Table \ref{table: moments}, we display the first values of $\M_n[\mu_i]$, for $1\leq i \leq 6$ and even $2\leq n\leq 8$, following the procedure described in \S\ref{section: explicit dist}. On Table \ref{table: statistical moments} we show the first values of $M_{n,x}$ for $x=2^{27}$.

\begin{table}[h]
\begin{center}
\caption{Some Sato-Tate groups $\ST(\Jac(\Cc_k))$. }\label{table: ST groups}
\vspace{5pt}
\begin{tabular}{|l|l|r|r|r|}\hline
$(\ell,k)$ & $M_k$ & $W_k$ & $g$ & $\{g,\dots,g^{\frac{\ell-1}{2}}\,|\, g^{\frac{\ell-1}{2}},\dots,g^{\ell-1} \}$ \\\hline
$(5,2)$ & $\{1,3\}$ & $\{1\}$  &  $ 2 $ & $\{2,4\,|\,\textbf{3},\textbf{1}\}$\\\hline

\multicolumn{5}{|c|}{
$
\gamma=
\begin{pmatrix}
0 & I_2\\
J_2 & 0 
\end{pmatrix}\qquad
U=
\begin{pmatrix}
U_1 &  0\\
0 & U_2 
\end{pmatrix}
$
}\\[6pt]
\multicolumn{5}{|c|}{$P_{\gamma^2}(T)=(T^2+1)^2,\quad P_{\gamma}(T)=T^4+1\,.$}\\\hline
$(7,2)$ & $\{1,2,4\}$ & $\{1,2,4\}$ &  $  3 $ &  $\{3,\textbf 2,6\,|\,\textbf 4, 5,\textbf 1\}$\\\hline

\multicolumn{5}{|c|}{
$
\gamma=
\begin{pmatrix}
0 & J_2 & 0\\
0 & 0 & J_2\\
J_2 & 0 & 0
\end{pmatrix}\qquad
U=
\begin{pmatrix}
U_1 & 0 & 0\\
0 & U_1 & 0\\
0 & 0 & U_1
\end{pmatrix}
$
}\\[6pt]
\multicolumn{5}{|c|}{$P_{\gamma^3}(T)=(T^2+1)^3,\quad P_{\gamma^2}(T)=T^6+(u_1^3+\overline u_1^3)T^3+1,\quad P_{\gamma}(T)=T^6+1\,.$}\\\hline

$(7,3)$  & $\{1,3,5\}$ & $\{1\}$ &  $  3 $ &  $\{\textbf 3,2,6\,|\,4,\textbf 5,\textbf 1\}$  \\\hline
\multicolumn{5}{|c|}{
$
\gamma=
\begin{pmatrix}
0 & J_2 & 0\\
0 & 0 & I_2\\
I_2 & 0 & 0
\end{pmatrix}\qquad
U=
\begin{pmatrix}
U_1 & 0 & 0\\
0 & U_2 & 0\\
0 & 0 & U_3
\end{pmatrix}
$
}\\[6pt]
\multicolumn{5}{|c|}{$P_{\gamma^3}(T)=(T^2+1)^3,\quad P_{\gamma^2}(T)=T^6+(u_1u_2\overline u_3+\overline u_1\overline u_2 u_3)T^3+1,\quad P_{\gamma}(T)=T^6+1\,.$}\\\hline

$(11,1)$  & $\{1,2,3,4,5\}$  & $\{1\}$  &  $ 2$ & $\{\textbf 2,\textbf 4,8,\textbf 5,10\,|\,9,7,\textbf 3,6,\textbf 1\}$  \\\hline
\multicolumn{5}{|c|}{
$
\gamma=
\begin{pmatrix}
0 & I_2 & 0 & 0 & 0\\
0 & 0 & J_2 & 0 & 0\\
0 & 0 & 0 & J_2 & 0\\
0 & 0 & 0 & 0 & J_2\\
I_2 & 0 & 0 & 0 & 0
\end{pmatrix}\qquad
U=
\begin{pmatrix}
U_1 & 0 & 0 & 0 & 0\\
0 & U_2 & 0 & 0 & 0\\
0 & 0 & U_3 & 0 & 0\\
0 & 0 & 0 & U_4 & 0\\
0 & 0 & 0 & 0 & U_5
\end{pmatrix}
$}\\[6pt]
\multicolumn{5}{|c|}{$P_{\gamma^5}(T)=(T^2+1)^5,\quad P_{\gamma^2}(T)=T^{10}+(u_1\overline u_2\overline u_3\overline u_4\overline u_5+\overline u_1 u_2 u_3 u_4 u_5)T^5+1,\quad P_{\gamma}(T)=T^{10}+1\,.$}\\\hline

$(13,2)$  & $\{1,2,3,4,7,8\}$ & $\{  1 \}$  &  $2$ & $\{\textbf 2,\textbf 4,\textbf 8,\textbf 3,6,12\,|\,11,9,5,10,\textbf 7,\textbf 1\}$  \\\hline
\multicolumn{5}{|c|}{
$
\gamma=
\begin{pmatrix}
0 & I_2 & 0 & 0 & 0 & 0\\
0 & 0 & I_2 & 0 & 0 & 0\\
0 & 0 & 0 & I_2 & 0 & 0\\
0 & 0 & 0 & 0 & J_2 & 0\\
0 & 0 & 0 & 0 & 0 & I_2\\
I_2 & 0 & 0 & 0 & 0 & 0
\end{pmatrix}\qquad
U=
\begin{pmatrix}
U_1 & 0 & 0 & 0 & 0 & 0\\
0 & U_2 & 0 & 0 & 0 & 0\\
0 & 0 & U_3 & 0 & 0 & 0\\
0 & 0 & 0 & U_4 & 0 & 0\\
0 & 0 & 0 & 0 & U_5 & 0\\
0 & 0 & 0 & 0 & 0 & U_6\\
\end{pmatrix}
$}\\[6pt]
\multicolumn{5}{|c|}{$P_{\gamma^6}(T)=(T^2+1)^6,\quad P_{\gamma^4}(T)=(T^{6}+(u_1\overline u_3\overline u_5+\overline u_1 u_3u_5)T^3+1)(T^{6}+(u_2\overline u_4\overline u_6+\overline u_2 u_4u_6)T^3+1),$}\\
\multicolumn{5}{|c|}{$P_{\gamma^3}(T)=(T^4+1)^3,\quad P_{\gamma^2}(T)=(T^{6}+1)^2,\quad P_{\gamma}(T)=T^{12}+1$}\\\hline

$(13,3)$  & $\{1,2,3,5,6,9\}$ & $\{  1,3,9 \}$  &  $2$ & $\{\textbf 2,4, 8,\textbf 3,\textbf 6,12\,|\,11,\textbf 9,\textbf 5,10, 7,\textbf 1\}$  \\\hline
\multicolumn{5}{|c|}{
$
\gamma=
\begin{pmatrix}
0 & J_2 & 0 & 0 & 0 & 0\\
0 & 0 & I_2 & 0 & 0 & 0\\
0 & 0 & 0 & J_2 & 0 & 0\\
0 & 0 & 0 & 0 & I_2 & 0\\
0 & 0 & 0 & 0 & 0 & J_2\\
I_2 & 0 & 0 & 0 & 0 & 0
\end{pmatrix}\qquad
U=
\begin{pmatrix}
U_1 & 0 & 0 & 0 & 0 & 0\\
0 & U_2 & 0 & 0 & 0 & 0\\
0 & 0 & U_1 & 0 & 0 & 0\\
0 & 0 & 0 & U_2 & 0 & 0\\
0 & 0 & 0 & 0 & U_1 & 0\\
0 & 0 & 0 & 0 & 0 & U_2\\
\end{pmatrix}
$}\\[6pt]
\multicolumn{5}{|c|}{$P_{\gamma^6}(T)=(T^2+1)^6,\quad P_{\gamma^4}(T)=(T^{6}+(u_1^3+\overline u_1 ^3)T^3+1)(T^{6}+(u_2^3+\overline u_2 ^3)T^3+1),$}\\
\multicolumn{5}{|c|}{$P_{\gamma^3}(T)=(T^4+1)^3,\quad P_{\gamma^2}(T)=(T^{6}+1)^2,\quad P_{\gamma}(T)=T^{12}+1$}\\\hline

\end{tabular}
\end{center}
\end{table}

\begin{table}
\begin{center}
\caption{First moments of the measures $\mu_i$. For $i$ and $n$ odd, $\M_n[\mu_i]=0$, and we do not write these moments on the table.}\label{table: moments}
\vspace{5pt}
\begin{tabular}{|r|r|r|r|r|}\hline
$(\ell,k)$ & $\M_2[\mu_1]$ & $\M_4[\mu_1]$ & $\M_6[\mu_1]$ & $\M_8[\mu_1]$  \\\hline
$(5,2)$ & 1 & 9 & 100 & 1225  \\
$(7,2)$ & 3 & 81 & 2430 & 76545  \\
$(7,3)$ & 1 & 15 & 310 & 7455  \\
$(11,1)$ & 1 & 27 & 1090 & 55195  \\
$(13,2)$ & 1 & 33 & 1660 & 106785  \\
$(13,3)$ & 3 & 243 & 24300 & 2679075  \\\hline
$(\ell,k)$ & $\M_1[\mu_2]$ & $\M_2[\mu_2]$ & $\M_3[\mu_2]$ & $\M_4[\mu_2]$  \\\hline
$(5,2)$ &  1 & 3 & 10 & 41  \\
$(7,2)$ &  2 & 18 & 207 & 2646 \\
$(7,3)$ & 1 & 5 & 35 & 321  \\
$(11,1)$ &   1 & 9 & 133 & 2873  \\
$(13,2)$ &  1  & 11 & 206 & 5781  \\
$(13,3)$ &  2 & 60 & 2610 & 130842  \\\hline
$(\ell,k)$ & $\M_2[\mu_3]$ & $\M_4[\mu_3]$ & $\M_6[\mu_3]$ & $\M_8[\mu_3]$  \\\hline   
$(7,2)$ &  28 & 7860 & 2575810 & 893661020  \\ 
$(7,3)$ &  6 & 822 & 184860 & 48884710 \\
$(11,1)$ & 24 & 73176 & 406662720 & 2941907232600 \\
$(13,2)$ &  39 & 287391 & 4433856900 & 93962238664175 \\
$(13,3)$ &  487 & 12209463 & 398722297600 & 14560811533839655  \\\hline
$(\ell,k)$ & $\M_1[\mu_4]$ & $\M_2[\mu_4]$ & $\M_3[\mu_4]$ & $\M_4[\mu_4]$   \\\hline
$(11,1)$ &  1 & 54 & 4588 & 497236  \\
$(13,2)$ &  3 & 139 & 20267 & 4480911 \\
$(13,3)$ &  10 & 2142 & 712107 & 266575698\\\hline
$(\ell,k)$ & $\M_2[\mu_5]$ & $\M_4[\mu_5]$ & $\M_6[\mu_5]$ & $\M_8[\mu_5]$  \\\hline
$(11,1)$ &  72 & 934332 & 22782049800 & 725020102732940 \\
$(13,2)$ &  236 & 22587768 & 4493470904960 & 1230243879356591400  \\
$(13,3)$ & 5004 & 1604318076 & 675819691911360 &
319107416394892272084  \\\hline
$(\ell,k)$ & $\M_1[\mu_6]$ & $\M_2[\mu_6]$ & $\M_3[\mu_6]$ & $\M_4[\mu_6]$  \\\hline
$(13,2)$ &  4 & 334 & 93100 & 38562182  \\
$(13,3)$ &  16 & 6678 & 4147390 & 2893450202  \\\hline
\end{tabular}
\end{center}
\end{table}

\begin{table}
\begin{center}
\caption{Some moment statistics of normalized traces for $x=2^{27}$. }\label{table: statistical moments}
\vspace{5pt}
\begin{tabular}{|r|r|r|r|r|r|r|r|r|r|}\hline
$(\ell,k)$ & $M_{1,x}$ & $M_{2,x}$ & $M_{3,x}$ & $M_{4,x}$ & $M_{5,x}$ & $M_{6,x}$  & $M_{7,x}$ & $M_{8,x}$ \\\hline
$(5,2)$ &   -0.000 & 1.010  & -0.002 & 9.084 & -0.030 & 100.877 & -0.366 & 1235.171 \\
$(7,2)$ & -0.000 & 2.999  & -0.000 & 80.984 & -0.009 & 2429.414 & 0.674 & 76523.229  \\
$(7,3)$ & 0.000 & 0.999 & 0.000 & 14.979 & 0.011 & 309.265 & 0.722 & 7428.375  \\
$(11,1)$ & -0.000 & 0.999 & -0.007 & 26.907 & -0.203 & 1080.500 & -3.930 & 54274.737  \\
$(13,2)$ & -0.000 & 1.001 & 0.004 & 32.948 & 0.376 & 1646.380 & 43.571 & 104860.429  \\
$(13,3)$ & -0.000 & 3.002 & -0.026 & 243.122 & -2.262 & 24306.084 & -199.309 & 2679022.039  \\\hline
\end{tabular}
\end{center}
\end{table}

\clearpage


\begin{thebibliography}{McK-Sta}

\bibitem[BGK03]{BGK03} G. Banaszak, W. Gajda, P. Kraso\'n, \emph{On Galois representations for abelian varieties with real and complex multiplications}, Journal of Number Theory \textbf{100}, No. 1, 117--132 (2003).

\bibitem[BK15]{BK11}
G. Banaszak and K.S. Kedlaya, \emph{An algebraic Sato-Tate group and
Sato-Tate conjecture},  Indiana University Mathematics Journal \textbf{64} (2015), 245--274. 

\bibitem[Br\"u04]{Bru04} L. Br\"unjes, \emph{Forms of Fermat equations and their zeta functions}, World Scientific Publishing Co. Pte. Ltd., Hackensack, NJ, 2004.

\bibitem[Del82]{Del82} P. Deligne, \emph{Hodge cycles on abelian varieties}, Hodge Cycles, Motives, and Shimura Varieties, Lectures Notes in Math. \textbf{900}, Springer, 1982, 9--100.

\bibitem[Doh94]{Doh94} K. Dohmae, \emph{Demjanenko Matrix for Imaginary Abelian Fields of Odd Conductors}, Proc. Japan Acad. \textbf{70}, Ser. A (1994), 292--294.

\bibitem[Fit14]{Fit14} F. Fit\'e, \emph{Equidistribution, $L$-functions and Sato-Tate groups}, to appear in the Proceedings of ``Quintas Jornadas de Teor\'ia de n\'umeros", available at arXiv:1405.5162 (2014).

\bibitem[FKRS12]{FKRS12} F. Fit\'e, K.S. Kedlaya, V. Rotger, A.V. Sutherland, \emph{Sato-Tate distributions and Galois endomorphism modules in genus $2$}, Compositio Mathematica \textbf{148}, n. 5 (2012) 1390--1442.

\bibitem[FS13]{FS13} F. Fit\'e, A.V. Sutherland, \emph{Sato-Tate distributions of twists of $y^2=x^5-x$ and $y^2=x^6+1$}, Algebra \& Number Theory \textbf{8} n. 3 (2014), 543--585.

\bibitem[FS15]{FS15} F. Fit\'e, I.E. Shparlinski, \emph{On the singularity of the Demjanenko matrix of quotients of Fermat curves}, to appear in Proc. Amer. Math. Soc. (2015).

\bibitem[Gon99]{Gon99} J. Gonz\'alez, \emph{Fermat Jacobians of prime degree over finite fields}, Canad. Math. Bull., \textbf{42(1)}, 78--86, 1999.

\bibitem[Gre80]{Gre80} R. Greenberg, \emph{On the Jacobian variety of some algebraic curves}, 
Compositio Mathematica \textbf{42} no. 3 (1980/81), 345--359.

\bibitem[Gro78]{Gro78} B.H. Gross, {\em On the Periods of Abelian Integrals and a Formula of Chowla and Selberg}, Inventiones math. \textbf{45} (1978), 193--211.

\bibitem[GR78]{GR78} B.H. Gross, D.E. Rohrlich, \emph{Some Results on the Mordell-Weil Group of the Jacobian of the Fermat curve}, Inventiones math. \textbf{44} (1978), 201--224. 

\bibitem[Has55]{Has55} H. Hasse, \emph{Zetafunktion und $L$-Funktionen zu einem arithmetischen Funktionenk\"orper vom Fermatschen Typus}, Abh. Deutsch. Akad. Wiss. Berlin. Kl. Math. Nat. 1954 (1954), no. 4, 70 pp. (1955). 

\bibitem[Haz90]{Haz90} F. Hazama, \emph{Demjanenko Matrix, Class Number, and Hodge group}, Journal of Number Theory \textbf{34} (1990), 174--177.

\bibitem[Hec20]{Hec20} E. Hecke, \emph{Eine neue Art von Zetafunktionen und ihre Beziehungen zur Verteilung der Primzahlen}. Zweite Mitteilung, Math. Zeit.~\textbf{6} (1920), 11--51.

\bibitem[Joh13]{Joh13} C. Johansson, \emph{On the Sato-Tate conjecture for non-generic abelian surfaces}, available at arXiv:1307.6478 (2013).

\bibitem[KR78]{KR78}
N. Koblitz, D. Rohrlich,
\textit{Simple factors in the Jacobian of a Fermat curve},
Canadian Journal of Mathematics, Vol. XXX, No. \textbf{6} (1978), 1183--1205.

\bibitem[KS09] {KS08} K.S. Kedlaya, A.V. Sutherland, \emph{Hyperelliptic curves, $L$-polynomials, and random matrices}, Arithmetic, Geometry, Cryptography, and Coding Theory (AGCT 2007), Contemporary Math. {\bf 487}, Amer. Math. Soc., 2009, 119--162.

\bibitem[Kub65]{Kub65} T. Kubota, \emph{On the field extension by complex multiplication}, Transactions of the American Mathematical Society \textbf{118} (1965), 113--122.

\bibitem[Lan78]{La78} S. Lang, \emph{Cyclotomic fields}, Graduate Texts in Mathematics \textbf{59}, Springer-Verlag, New York- Heidelberg- Berlin, 1978.

\bibitem[Leo62]{Le62} H.W. Leopoldt, \emph{Zur Arithmetik in Abelschen K\"orper}, J. Reine Angew. Math. \textbf{209} (1962), 54--72.

\bibitem[Mai89]{Mai89} L. Mai, \emph{Lower Bounds for the ranks of CM types}, Journal of Number Theory \textbf{32}, 192-202 (1989).

\bibitem[Rib80]{Rib80} K.A. Ribet, \emph{Division fields of abelian varieties with complex multiplication}, Soci\'et\'e Math\'ematique de France, 2e s\'erie, M\'emoire No. 2, 1980, p. 75--94.

\bibitem[Ser68]{Ser68} J.-P. Serre, {\em Abelian $\ell$-adic Representations and Elliptic Curves}, Research Notes in Mathematics \textbf{7}, A K Peters, 1998.

\bibitem[Ser77]{Ser77} J.P. Serre, \emph{Linear representations of finite groups}, Translated from the second French edition by Leonard L. Scott, Graduate Texts in Mathematics, Vol. 42, Springer-Verlag, New York-Heidelberg, 1977.

\bibitem[Ser12]{Ser12} J.-P. Serre, \emph{Lectures on $N_X(p)$}, Research Notes in Mathematics \textbf{11}, CRC Press, 2012.

\bibitem[SS95]{SS95} J. W. Sands, W. Schwartz, \emph{A Demjanenko Matrix for Abelian Fields of Prime Power Conductor}, Journal of Number Theory \textbf{52} (1995), 85--97.

\bibitem[ST61]{ST61} G. Shimura, Y. Taniyama, \emph{Complex multiplication of abelian varieties and its applications to number theory}, Math. Soc. Japan,  Tokyo, 1961.

\bibitem[Sti90]{Sti90} L. Stickelberger, \emph{\"Uber eine Verallgemeinerung der Kreistheilung}, Math. Ann. vol. \textbf{37} (1890), 321--367.

\bibitem[Tan96]{Tan96} S.G. Tankaeev, \emph{On the Mumford-Tate Conjecture for abelian varieties}, Algebraic geometry 4, J. Math Sci. \textbf{81} (1996), no. 3, 2719--2737.

\bibitem[Wei52]{Wei52} A. Weil, {\em Jacobi sums as "Gr\"ossencharaktere"},  Trans. Amer. Math. Soc. \textbf{73} (1952), 48--495.

\bibitem[Yu15]{Yu13} C.-F. Yu, \emph{Mumford-Tate Conjecture for CM abelian varieties}, to appear in Taiwanese J. Math. (2015).

\end{thebibliography}
\end{document}